\numberwithin{equation}{section}
\newtheorem{Theorem}{Theorem}[section]
\newtheorem{theorem}[Theorem]{Theorem}
\newtheorem{proposition}[Theorem]{Proposition}
\newtheorem{lemma}[Theorem]{Lemma}
\newtheorem{remark}[Theorem]{Remark}
\newtheorem{definition}[Theorem]{Definition}
\theoremstyle{nonumberplain}
\newtheorem{proof}{Proof}
\newcommand\caC{{\mathcal C}}
\newcommand\caD{{\mathcal D}}
\newcommand\caE{{\mathcal E}}
\newcommand\caF{{\mathcal F}}
\newcommand\caL{{\mathcal L}}
\newcommand\caP{{\mathcal P}}
\newcommand\caT{{\mathcal T}}
\newcommand\caS{{\mathcal S}}
\newcommand\caO{{\mathcal O}}
\newcommand\bfm{{\mathsf{\mathbf m}}}
\newcommand\gone{{ \mathchoice {1\mskip-4mu\mathrm{l} } {1\mskip-4mu\mathrm{l} }{1\mskip-4.5mu\mathrm{l} } {1\mskip-5mu\mathrm{l}} }}
\newcommand\gR{{\mathbb R}}
\newcommand\gC{{\mathbb C}}
\newcommand\gS{{\mathbb S}}
\newcommand\xz{{z}}
\newcommand\yz{{\tilde z}}
\newcommand\xw{{w}}
\newcommand\yw{{\tilde w}}
\newcommand\that{\hat}
\newcommand\thatp{\hat}
\newcommand\algA{{\mathbf A}}
\newcommand\algB{{\mathbf B}}
\newcommand\ehH{\mathcal H}
\newcommand\bb{{\text{\textup{b}}}}
\newcommand\ka{{\mathfrak a}}
\newcommand\kb{{\mathfrak b}}
\newcommand\kq{{\mathfrak q}}
\newcommand\kg{{\mathfrak g}}
\newcommand\kn{{\mathfrak n}}
\newcommand\ks{{\mathfrak s}}
\newcommand\kM{{\mathfrak M}}
\newcommand\eps{{\varepsilon}}
\newcommand\ad{{\text{\textup{ad}}}}
\newcommand\Ad{{\text{\textup{Ad}}}}
\newcommand\fois{\mathord{\cdot}}
\DeclareMathOperator{\tr}{Tr} 
\newcommand\Der{{\text{\textup{Der}}}}
\newcommand\dd{{\text{\textup{d}}}}
\newcommand\norm{\mathord{\parallel}}
\newcommand\ee{{\epsilon}}
\title{Non-formal star-exponential on contracted one-sheeted hyperboloids\footnote{Work
supported by the Belgian Interuniversity Attraction Pole (IAP) within the framework ``Dynamics, Geometry and Statistical Physics'' (DYGEST).}}
\author{Pierre Bieliavsky$^a$, Axel de Goursac$^{a}$, Yoshiaki Maeda$^b$, Florian Spinnler$^{a,c}$}
\begin{document}

\maketitle

\vspace*{-1cm}
\begin{center}
\textit{$^a$Institut de Recherche en Math\'ematique et Physique, Universit\'e Catholique de Louvain\\ Chemin du Cyclotron, 2, 1348 Louvain-la-Neuve, Belgium\\
e-mail: \texttt{Pierre.Bieliavsky@uclouvain.be, Axelmg@melix.net, Florian.Spinnler@uclouvain.be}}\\
\vskip .5cm
\textit{$^b$ Tohoku Forum for Creativity, Tohoku University\\
 2-1-1, Katahira, Aoba-Ku, Sendai, 980-8577 Japan\\
    e-mail: \texttt{Yoshimaeda@m.tohoku.ac.jp}}\\
    \vskip .5cm
\textit{$^c$ D\'epartement de Math\'ematiques, Facult\'e des Sciences\\ Universit\'e Libre de Bruxelles, Boulevard du Triomphe\\ 1050 Bruxelles, Belgium}\\
\end{center}%

\vskip 2cm

\begin{abstract}
In this paper, we exhibit the non-formal star-exponential of the Lie group $SL(2,\gR)$ realized geometrically on the curvature contraction of its one-sheeted hyperboloid orbits endowed with its natural non-formal star-product. It is done by a direct resolution of the defining equation of the star-exponential and produces an expression with Bessel functions. This yields a continuous group homomorphism from $SL(2,\gR)$ into the von Neumann algebra of multipliers of the Hilbert algebra associated to this natural star-product. As an application, we prove a new identity on Bessel functions.
\end{abstract}
\vskip 4cm

\noindent{\it Keywords:} Star-exponential; unitary representation; principal series; deformation quantization; Bessel functions; orthogonality relation
\vskip 0.2cm
\noindent{\it Mathematics Subject Classification:} 46L65; 22E46; 43A80; 33C10

\pagebreak

\tableofcontents
\vskip 1cm

\section{Introduction}
\label{sec-intro}
Deformation quantization, initiated in \cite{Bayen:1978}, consists in deforming the pointwise product of the commutative algebra of smooth functions $\caC^\infty(M)$ on a Poisson manifold $M$ into a noncommutative star-product $\star_\theta$ depending on a deformation parameter $\theta$. Formal deformation quantizations were intensively studied \cite{Omori:1991,Lecomte:1992,Fedosov:1994,Gutt:2000} and definitely classified in \cite{Kontsevich:2003}. In the non-formal setting, there exist some examples of deformation of groups and their actions like Abelian Lie groups $\gR^{2n}$ \cite{Rieffel:1993}, Abelian Lie supergroups \cite{Bieliavsky:2010su,deGoursac:2014kv}, K\"ahlerian Lie groups \cite{Bieliavsky:2010kg}, Abelian $p$-adic groups \cite{Gayral:2014ad}, deformations of $\gC^{2n}$ in holomorphic \cite{Omori:2000,Bieliavsky:2002ma} or resurgent \cite{Garay:2013gya} context, deformations of $SU(1,n)$ \cite{Bieliavsky:2008mv,Korvers:2014}, but no general classifying theory is available.

Associated to star-products and following \cite{Fronsdal:1978}, the notion of star-exponential \cite{Bayen:1978,Bayen:1982,Arnal:1988} plays an important role for the study of deformation quantization, for giving access to spectrum of operators \cite{Cahen:1984,Cahen:1985}, for the link with representation theory. In the non-formal context, star-exponential of quadratic functions were explicitly computed \cite{Omori:2000,Omori:2011tr} for the Moyal-Weyl product. Applications to harmonic analysis such as character formula or Fourier transformation can be obtained by computing non-formal star-exponential of momentum maps of some Lie group's action. This was performed for nilpotent Lie groups in \cite{Arnal:1990} by using the Moyal-Weyl product. By using Berezin and Weyl quantizations, this program of star-representations was achieved in the case of unitary irreducible representations of compact semisimple Lie groups \cite{Arnal:1988cg}, of holomorphic discrete series \cite{Arnal:1989cg} (see also \cite{Bieliavsky:1999st} for $SL_2(\gR)$) and principal series of semisimple Lie groups \cite{Cahen:1996st} (see also \cite{Arnal:1996ya}). 


However, one can wonder wether it is possible to construct non-formal star-exponentials for star-products that are geometrically more natural for the orbits of the group. In this spirit, the non-formal star-exponential of K\"ahlerian Lie groups with negative curvature was exhibited in \cite{Bieliavsky:2013sk} for invariant star-products on their coadjoint orbits, with application to the construction of an adapted Fourier transformation.

In this paper, we are interested in the one-sheeted hyperboloid orbits of $SL_2(\gR)$ \cite{Lang:1975}, also called two-dimensional anti-de Sitter space, $AdS_2\,:=\,SL_2(\gR)/SO(1,1)$. To compute its star-exponential, we want to dispose of a non-formal $\mathfrak{sl}_2(\gR)$-covariant star-product geometrically adapted to $AdS_2$, and to this aim, we will look at its natural contraction. Let us first show that this contraction of $AdS_2$ corresponds locally to the symmetric space $M\,:=\,SO(1,1)\ltimes\gR^2/\gR$ called Poincar\'e coset. The global picture is however given in the conclusion of this paper but it is not needed now.

\medskip

This curvature contraction is induced by the contraction of Lie algebras:
\begin{equation*}
\mathfrak{sl}_2(\gR)\longrightarrow\mathfrak{so}(1,1)\ltimes\gR^2
\end{equation*}
that corresponds to the limit $t\to0$ in the following three-dimensional real Lie algebra $\kg_t$ table:
\begin{equation*}
\left[H,E\right]=2E,\qquad \left[H,F\right]=-2F,\qquad \left[E,F\right]=t\,H\;,
\end{equation*}
where $\kg_t$ is isomorphic to $\kg_{1}\simeq\mathfrak{sl}_2(\gR)$ for every $t>0$,  while $\kg_0\simeq\mathfrak{so}(1,1)\ltimes\gR^2$. 

\noindent The above contraction of Lie algebras induces a geometric contraction at the level of naturally associated symmetric spaces. To see this, observe first that
setting
\begin{equation*}
\sigma_t(H)\;:=\;-H\quad\sigma_t(E+F)\;:=\;E+F\quad\sigma_t(E-F)\;:=\;F-E
\end{equation*}
defines an involutive automorphism of $\kg_t$ for every $t\in\gR$. When $t>0$, the adjoint orbit of the element $E+F$ in $\kg_t$ then realizes a symmetric space $M_t$ admitting
$(\kg_t,\sigma_t)$ as associated infinitesimal involutive Lie algebra. 

\noindent One gets a local chart on $M_t$ by considering the open orbit of the base point $E+F$ under the action the Iwasawa factor $\gS\;:=\;\exp(\gR H\oplus\gR E)$: 
\begin{equation*}
\varphi_{(t)}\;:\;\gR H\oplus\gR E\longrightarrow M_t:(a,\ell)\mapsto\Ad_{e^{aH}e^{\ell E}}(E+F)\;.
\end{equation*}
Within this local chart, the geodesic symmetry at the base point $E+F$ in $M_t$ corresponds to the map\footnote{The expression of the symmetry centered around another point is easily computed from the $\gS$-equivariance of the symmetric space structure.}
\begin{equation*}
s^{(t)}_{(0,0)}(a,\ell)\;:=\;(\,-a\,-\,\log(1-t\ell^2)\,,\,-\ell)
\end{equation*}
whose maximal domain is the open strip $|\ell|\,<\,t^{-1/2}$. 

\noindent Note that in the limit $t\to0$, the symmetry becomes global ($s^{(0)}_{(0,0)}=-\mbox{\rm id}$), realizing the canonical (non-flat) symmetric space structure on $M=M_0=SO(1,1)\ltimes\gR^2/\gR$:
\begin{equation*}
s_{(a,\ell)}^{(0)}(a',\ell')\;=\;(2a-a'\,,\,2\cosh(2(a-a'))\ell-\ell')\;.
\end{equation*}
All the $\star$-products on $M$ that are invariant under the symmetries $\{s^{(0)}_x\}_{x\in M}$ are known in a totally explicit way \cite{Bieliavsky:2008or}. Of course, while $\gS$-invariant, none of them is
$SL_2(\gR)$-invariant, even not at the infinitesimal level. However, the contraction procedure is somehow partially remembered at the limit $t\to0$:
every such $\star$-product turns out to be $\mathfrak{sl}_2(\gR)$-{\sl covariant} in the sense that the classical moment mapping 
\begin{equation}
\label{MOMENT}
\lambda^{(t)}\;:\;\mathfrak{sl}_2(\gR)\to C^\infty(M_t):X\mapsto\lambda^{(t)}_X
\end{equation}
when restricted to the image of the local chart still yields a Lie algebra homomorphism into the $\star$-product algebra at the limit $t\to0$. It means that if one endows $C^\infty(M)[[\theta]]$ with a $SO(1,1)\ltimes\gR^2$-invariant $\star$-product $\star_\theta^1$, the map
\begin{equation*}
\mathfrak{sl}_2(\gR)\to \left(C^\infty(M)[[\theta]],\star^1_\theta\right)\,:\,X\,\mapsto\,(\varphi_{(0)}^{-1})^\star\left(\varphi_{(t)}^\star\lambda^{(t)}_X\right)
\end{equation*}
is a homomorphism of Lie algebras for every $t>0$.

\medskip

In view of the fact that this contraction process is entirely canonical, it is tempting to study its possible relation with the representation theory of $SL_2(\gR)$ and in particular 
to investigate whether it reproduces the unitary irreducible representation series canonically associated to the $AdS_2$-orbits i.e. the principal series.

This is what is done in the present article within a non-formal star-product (i.e. operator algebraic) approach. More precisely, we here consider the non-formal $SO(1,1)\ltimes\gR^2$-invariant product $\star^1_\theta$ ($\theta\in\gR_0$) on $L^2(M)$ defined in \cite{Bieliavsky:2010kg}. Setting the Hilbert algebra $\algA_\theta\;:=\;(L^2(M), \star^1_\theta)$ and denoting by $\kM_\bb(\algA_\theta)$ its von Neumann algebra of bounded multipliers (see in particular \cite{deGoursac:2014mu}), we prove that the formal Lie map (\ref{MOMENT}), say at $t=1$, actually exponentiates 
to a weakly continuous group morphism:
\begin{equation*}
\caE_{\star_\theta^1}:SL_2(\gR)\to\kM_\bb(\algA_\theta)\;.
\end{equation*}

After a presentation of the geometric context in section \ref{sec-present}, the explicit expression of $\caE_{\star_\theta^1}$ on the generator $F$ in some coordinate chart $\Phi_\kappa$ is directly computed in terms of Bessel functions in section \ref{sec-direct} by constructing the spectral measure of the differential operator involved in the defining equation of the star-exponential. On another coordinate chart $\Psi_\kappa$ and with another star-product $\sharp_\theta$ (already considered in \cite{Cahen:1996st}), the star-exponential $\caE_{\sharp_\theta}$ is expressed in terms of the principal series representation $\caP^\theta$ associated with the $AdS_2$-orbit in section \ref{sec-principal}. 

Then, we want to relate both star-exponentials, $\caE_{\star_\theta^1}$ expressed with Bessel functions and $\caE_{\sharp_\theta}$ expressed with the principal series representation, so we need an intertwiner between the corresponding star-products. Three different methods are presented to obtain explicitly a unitary intertwining operator $W$ between $\sharp_\theta$ and $\star_\theta^1$ in section \ref{sec-inter}. In order to get unitarity of $W$, we need two copies of the range, namely $\algA_\theta\oplus\algA_{-\theta}$, so that we rather proved that the associated left-regular representation
\begin{equation*}
g\mapsto\big(\caE_{\star_\theta^1}(g)\star_\theta^1\,\bullet,\ \caE_{\star_{-\theta}^1}(g)\star_{-\theta}^1\,\bullet\big)
\end{equation*}
is unitarily equivalent with $\caP^\theta(g)\otimes\mbox{\rm id}$. In section \ref{sec-concl}, we get a nice geometric interpretation of these two copies in terms of the global curvature contraction of $AdS_2$. As an application, we end this article by deriving from the comparison of $\caE_{\star_\theta^1}(e^{tF})$ and $W(\caE_{\sharp_\theta}(e^{tF}))$  a new identity on Bessel functions.

\section{Star-products for Anti-deSitter space}
\label{sec-present}

\subsection{Adjoint orbits of $SL(2,\gR)$}
\label{subsec-orbit}

Let us fix the notations. We consider $\kg:=\mathfrak{sl}(2,\mathbb{R})$ the Lie algebra  of the group $G:=SL(2,\mathbb{R})$. We choose a basis of $\kg$ satisfying the following commutation relations:  
\begin{equation*}
[H,E]=2E,\qquad [H,F]=-2F,\qquad [E,F]=H.
\end{equation*}
The maximal compact subgroup of $G$ is isomorphic to $SO(2)$ and its Lie algebra is generated by the element $E-F$.

The Lie algebra $\kg$ can be (Cartan) decomposed as a direct vector space sum $\mathfrak{k}\oplus \mathfrak{P}$, where $\mathfrak{P}=\langle H,E+F \rangle $ is the orthogonal complement for the Killing form of the Lie algebra $\mathfrak{k}\equiv \langle E-F \rangle$ of $SO(2)$. The Killing form $\beta$ on $\kg$ is given by : $\beta(X,Y)=\frac{1}{8}\tr(\ad_X\ad_Y)$ (this normalization differs from the usual one). Furthermore, the Lie algebra $\kg$ can also be (Iwasawa) decomposed as 
\begin{equation*}
\kg=\mathfrak{k}\oplus \mathfrak{a} \oplus \mathfrak{n},
\end{equation*}
where $\mathfrak{a}$ is a maximal abelian subalgebra of $\mathfrak{P}$, and $\mathfrak{n}$ is the nilpotent algebra obtained as the sum of the positive root spaces of $\mathfrak{a}$, for some choice of positiveness on $\mathfrak{a}^{*}$. Here we choose $\mathfrak{a}=\langle H\rangle$ so that the roots are $-2H^{*}$, $0$ and $2H^{*}$, and with the natural order we get $\mathfrak{n}=\langle E\rangle$. The Iwasawa subgroup of $G=SL(2,\mathbb{R})$ will be denoted by $\gS$, its Lie algebra is exactly $\ks=\ka\oplus\kn$.

Let us consider the adjoint orbits of $SL(2,\gR)$. For $\kappa \in \mathbb{R}$, fix $o_{\kappa} := E+\kappa F \in \kg$ and $\mathcal{O}_{\kappa}=\Ad_{G}(o_{\kappa})$ the adjoint orbit of $o_{\kappa}$ under the adjoint action of $G$. The case $\kappa>0$ corresponds to the Anti-DeSitter space, $\kappa=0$ to the positive half-line along $E$, while $\kappa<0$ corresponds to the hyperbolic plane. We will assume in this paper that $\kappa>0$. The Kirillov-Kostant-Souriau (KKS) symplectic form defined on the adjoint orbits of $G$ can be written as follows
\begin{displaymath}
\omega_z^{\mathcal{O}_{\kappa}}(X^{*}_z,Y^{*}_z)\equiv\beta(z,[X,Y]),
\end{displaymath}		
where $\beta$ is the Killing form of $\kg$ as above and $X^{*}$ is the fundamental vector field associated to $X\in \kg$, defined at a point $z\in \kg$ on $f\in C^\infty(\kg)$ by $X^{*}_z f=\frac{\dd}{\dd t}_{|t=0}f(e^{-tX}z)$.

We will consider the coordinate system $\Phi_{\kappa}: \ks \rightarrow \mathcal{O}_{\kappa}$ on the orbit $\mathcal{O}_{\kappa}$, given in terms of the Iwasawa subgroup $\gS$:
\begin{equation}
\label{eq-chartphi}
\Phi_\kappa(aH+\ell E):= \Ad_{e^{aH}e^{\ell E}}(o_{\kappa})=\kappa \ell H+(1-\kappa\ell^2 )e^{2a}E+ \kappa e^{-2a}F.
\end{equation}
It turns out that $\Phi_{\kappa}$ is a Darboux chart, but its image only corresponds to half of the adjoint orbit (see Figure \ref{fig-AdS-habit}). As we normalize the Killing form such that $\beta(H,H)=1$, $\beta(E,F)=\beta(F,E)=\frac{1}{2}$, we have $\omega^{\Phi}:=\Phi_{\kappa}^*\omega_z^{\mathcal{O}_{\kappa}}=\kappa \dd a\wedge\dd \ell$. In this chart, the action of $\gS$ corresponds to the left-multiplication.

The adjoint action of $G$ on $\ks\simeq\caO_\kappa$ (in this coordinate system) is strongly hamiltonian with moment map $\lambda$ given explicitly by
\begin{equation*}
\lambda_H(a,\ell )=\ell\kappa,\qquad \lambda_E(a,\ell )=\frac{\kappa}{2} e^{-2a},\qquad \lambda_F(a,\ell)=\frac{(1-\kappa\ell^2)}{2}e^{2a}.
\end{equation*}

\begin{figure}[!htb]
  \centering
  \includegraphics[height=10cm]{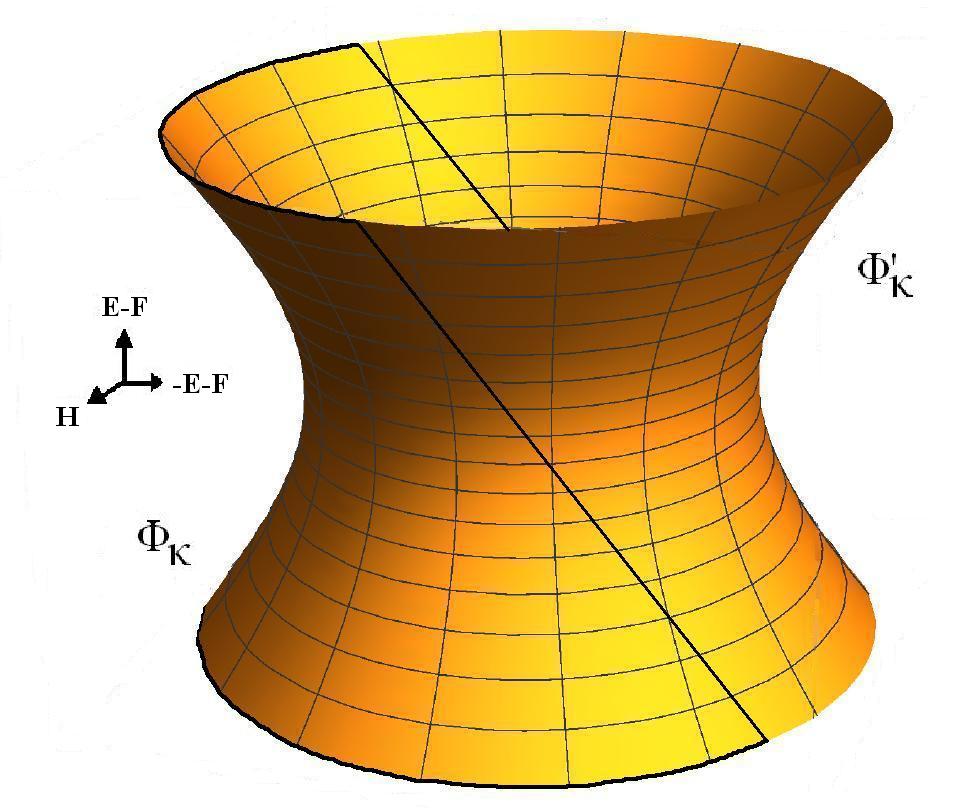}
  \caption[AdS]{\footnotesize{The two charts $\Phi_\kappa$ and $\Phi'_\kappa$ on $AdS_2$.}}
  \label{fig-AdS-habit}
\end{figure}


\subsection{Star-products and star-exponentials}
\label{subsec-starprod}

Set $\theta\in\gR^*$. The $\Phi_\kappa$ coordinate system yields a subspace $\ks$ of $\caO_\kappa$ isomorphic to $\gR^2$, on which we can define the Moyal product in the direction of the symplectic form $\kappa\dd a\wedge\dd \ell$. It is given by: for Schwartz functions $f_1,f_2\in \caS(\gR^2)$,
\begin{equation}
\big(f_1\star^0_\theta f_2\big)(a,\ell):=\frac{\kappa^2}{\pi^2\theta^2}\int f_1(a_1,\ell_1) f_2(a_2,\ell_2) e^{\frac{-2i\kappa}{\theta} (a_1\ell_2-a_2\ell_1+a_2\ell-a\ell_2+a\ell_1-a_1\ell)} \dd a_i\dd \ell_i\label{eq-star0}
\end{equation}
Such a Moyal star-product is $\kg$-covariant but not $\gS$-invariant. One prefers to deal with a star-product on $\gS\simeq\ks$ with symplectic form $\kappa\dd a\wedge\dd \ell$, which is $\gS$-invariant for the left action on $\gS$, or equivalently for the coadjoint action on $\ks$. It is also $\kg$-covariant, it was explicitly found in \cite{Bieliavsky:2002} and has the expression
\begin{multline}
(f_1\star_\theta^1 f_2)(a,\ell):=\frac{\kappa^2}{\pi^2\theta^2}\int \sqrt{\cosh(2(a_1-a_2))\cosh(2(a_2-a))\cosh(2(a-a_1))}\\
f_1(a_1,\ell_1)f_2(a_2,\ell_2)\, e^{\frac{i\kappa}{\theta}\Big(\sinh(2(a_1-a_2))\ell+\sinh(2(a_2-a))\ell_1+\sinh(2(a-a_1))\ell_2\Big)} \dd a_1\dd a_2\dd \ell_1\dd \ell_2.\label{eq-star1}
\end{multline}
Actually, it can be obtained by intertwining the Moyal product $\star_\theta^0$: $f_1\star^1_{\theta}f_2= T_{01}((T_{01}^{-1}f_1)\star_\theta^0(T_{01}^{-1}f_2))$ where the intertwiners (which are not $\gS$-equivariant) lie:
\begin{align}
&T_{01} f(a,\ell):=\frac{1}{2\pi}\int\ \sqrt{\cosh(\frac{\theta t}{\kappa})} e^{\frac{i\kappa}{\theta}\sinh(\frac{\theta t}{\kappa})\ell-i\xi t}f(a,\xi)\dd t\dd\xi\nonumber\\
&T^{-1}_{01} f(a,\ell)=\frac{1}{2\pi}\int\ \sqrt{\cosh(\frac{\theta t}{\kappa})} e^{-\frac{i\kappa}{\theta}\sinh(\frac{\theta t}{\kappa})\xi+it\ell}f(a,\xi)\dd t\dd\xi\label{eq-inter01}
\end{align}

\begin{remark}
\label{rmk-prodpoincare}
There is a $\gS$-equivariant diffeomorphism with the Poincar\'e coset $M:=SO(1,1)\ltimes\gR^2/\gR\simeq\gS$. In this point of view, the star-product $\star_\theta^1$ introduced above coincides with the natural star-product on $M$, namely the unique star-product invariant under the action of $SO(1,1)\ltimes\gR^2$ and of involution $(a,\ell)\mapsto (-a,-\ell)$ \cite{Bieliavsky:2008or}. It explains why we will consider it in the following and why we are interested in its associated star-exponential.
\end{remark}

It turns out that these products give rise to complete Hilbert algebras $(L^2(\gR^2),\star_\theta^0)$ and $(L^2(\gS),\star_\theta^1)$ for the usual complex conjugation and the standard scalar product. We call them Hilbert deformation quantizations \cite{deGoursac:2014mu} and we can consider the left von Neumann algebra (type $I_\infty$ factor), which will be expressed here as bounded multipliers of the Hilbert algebra.

Recall that a complex algebra with involution and scalar product is a Hilbert algebra if $\forall f_i\in\algA$,
\begin{equation*}
\langle f_2^*,f_1^*\rangle=\langle f_1,f_2\rangle,\qquad \langle f_1\fois f_2,f_3\rangle=\langle f_2,f_1^*\fois f_3\rangle,
\end{equation*}
if the map $\lambda_{f_1}:f_2\in\algA\mapsto f_1\fois f_2$ is bounded for the norm $\norm f\norm:=\sqrt{\langle f,f\rangle}$, and if the set $\{f_1\fois f_2,\ f_i\in\algA\}$ is dense in $\algA$.

Suppose that $\algA$ is a complete Hilbert algebra, it is in particular a Hilbert space. Its left von Neumann algebra can be expressed as the left part of bounded multipliers $\kM_\bb(\algA)$ \cite{deGoursac:2014mu} that are pairs $T=(L,R)$ of bounded operators on $\algA$ satisfying $\forall f_i\in\algA$,
\begin{equation}
f_1\fois L(f_2)=R(f_1)\fois f_2.\label{eq-multiplier}
\end{equation}
We have the equivalent characterization that $L$ is a bounded operator on $\algA$ satisfying 
\begin{equation}
L(f_1\fois f_2)=L(f_1)\fois f_2\quad\text{ and }\quad R(f)=(L^*(f^*))^*.\label{eq-multiplier2}
\end{equation}
Note that any unitary *-isomorphism $\Phi:\algA\to\algB$ between two complete Hilbert algebras can be extended to the bounded multipliers
\begin{equation}
\tilde\Phi:\kM_\bb(\algA)\to\kM_\bb(\algB),\label{eq-morphmult}
\end{equation}
by $\tilde\Phi(T):=(\Phi\circ L\circ\Phi^{-1},\Phi\circ R\circ \Phi^{-1})$.

\medskip

We note $\kM_{\star_\theta^0}(\gR^2)$ and $\kM_{\star_\theta^1}(\gS)$ the bounded multipliers algebras associated to the Hilbert algebras $(L^2(\gR^2),\star_\theta^0)$ and $(L^2(\gS),\star_\theta^1)$. These von Neumann algebras will be very useful as functional spaces to characterize the non-formal star-exponential. The intertwiner $T_{01}$ is actually a unitary *-isomorphism between these two Hilbert algebras. Even if the group $G=SL(2,\gR)$ does not stabilize the spaces $\gR^2$ and $\gS$ seen as part of the adjoint orbit $\caO_\kappa$ via the coordinate system $\Phi_\kappa$, we can consider the space generated by the moment maps $\lambda_H$, $\lambda_E$ and $\lambda_F$. Since the star-products are $\kg$-covariant, such a space has a Lie algebra structure for the star-commutator and it can also be denoted by $\kg$:
\begin{equation*}
[\lambda_H,\lambda_E]_{\star_\theta^j}=-2i\theta\lambda_E,\qquad [\lambda_H,\lambda_F]_{\star_\theta^j}=2i\theta\lambda_F,\qquad [\lambda_E,\lambda_F]_{\star_\theta^j}=-i\theta\lambda_H,
\end{equation*}
for $j=0,1$, and where one understands these functions $\lambda_X$ ($X\in\kg$) as unbounded operators acting by left $\star_\theta^j$-multiplication. It means that $\kg$ is a symmetry in the sense of \cite{deGoursac:2014mu} for the Hilbert deformation quantizations $(L^2(\gR^2),\star_\theta^0)$ and $(L^2(\gS),\star_\theta^1)$. Therefore, the (non-formal) star-exponential of this symmetry is well-defined \cite{Arnal:1990,Omori:2000}
\begin{equation*}
E_{\star_\theta^j}(\frac{i}{\theta}\lambda_X):=\sum_{k=0}^\infty \frac{1}{k!}\left(\frac{i}{\theta}\lambda_X\right)^{(\star_\theta^j k)},
\end{equation*}
as being a unitary (bounded) multiplier and we have $E_{\star_\theta^1}(\frac{i}{\theta}\lambda_X)=T_{01}E_{\star_\theta^0}(\frac{i}{\theta}\lambda_X)$ since $T_{01}\lambda_X=\lambda_X$. Moreover, this star-exponential satisfies the BCH property
\begin{equation*}
E_{\star_\theta^j}(\frac{i}{\theta}\lambda_X)\star_\theta^j E_{\star_\theta^j}(\frac{i}{\theta}\lambda_Y)=E_{\star_\theta^j}(\frac{i}{\theta}\lambda_{\text{BCH}(X,Y)})
\end{equation*}
if $\text{BCH}(X,Y)$ is defined. In particular, note that
\begin{align*}
&\text{BCH}(aH,\ell E)=aH+\frac{ae^a\ell}{\sinh(a)}E,\qquad \text{BCH}(aH,mF)=aH+\frac{ae^{-a}m}{\sinh(a)}F,\\
& \text{BCH}(\ell E,mF)=\frac{\text{Arccosh}(1+\frac12\ell m)}{\sqrt{\ell m+\frac14\ell^2m^2}}(\frac12\ell mH+\ell E+mF)\quad\text{if }\ell m>0.
\end{align*}
The $ax+b$ part of the symmetry ($\gS\subset G$) was studied in \cite{Bieliavsky:2013sk} and its star-exponential was explicitly computed
\begin{equation}
E_{\star^0_\theta}(\frac{i}{\theta}t\lambda_H)(a,\ell)=e^{\frac{i\kappa}{\theta}t\ell},\qquad E_{\star^0_\theta}(\frac{i}{\theta}t\lambda_E)(a,\ell)=e^{\frac{i\kappa}{2\theta} te^{-2a}}.\label{eq-starexphe}
\end{equation}
In the following, we want to compute explicitly the star-exponential of the last generator $\lambda_F$ of the symmetry $\kg$, to express the link between this star-exponential and the principal series representation of $SL(2,\gR)$ and to integrate the star-exponential at the level of the group.

\section{Direct computation of the star-exponential}
\label{sec-direct}

\subsection{Resolution of the equation}

Let us solve the defining equation of the star-exponential of $\lambda_F$
\begin{equation*}
\partial_t E_{\star^0_{\theta}}(\frac{i}{\theta}t\lambda_F)(a,\ell)=\frac{i}{\theta}\lambda_F\star_\theta^0 E_{\star^0_{\theta}}(\frac{i}{\theta}t\lambda_F)(a,\ell)
\end{equation*}
with initial condition $E_{\star^0_{\theta}}(\frac{i}{\theta}t\lambda_F)\equiv 1$ for $t=0$. We use the star-product $\star_\theta^0$ instead of the natural $\star_\theta^1$ for more simplicity in this equation, but we know explicitly the intertwiner $T_{01}$ \eqref{eq-inter01} between both star-products. Actually, we will solve this equation with a general initial condition $f\in L^2(\gR^2)$ to be able to turn it into a multiplier. Note that due to the expression of the Moyal product \eqref{eq-star0}, this equation is not a PDE but it contains integrals. To obtain a PDE, one can first perform a partial Fourier transformation with respect to the variable $\ell$
\begin{equation}
\caF_\ell(f)(a,\xi):=\int_{\gR} f(a,\ell)e^{-i\xi \ell}\dd \ell \label{eq-partfourier}
\end{equation}
Indeed, we compute that
\begin{equation*}
\caF_\ell(\lambda_F\star^0_{\theta}v)(a,\xi)=\frac12e^{2a+\frac{\theta\xi}{\kappa}}(1+\kappa\partial_\xi^2+\theta(2+\partial_a)\partial_\xi+\frac{\theta^2}{4\kappa}(4+4\partial_a+\partial_a^2))\caF_\ell( v)(a,\xi),
\end{equation*}
for a function $v \in \caS(\gR^2)$. To simplify this expression, let us also perform the change of variables
\begin{equation}
\varphi:(a,\xi)\mapsto (\xw,\yw):=\big(a+\frac{\theta}{2\kappa}\xi,a-\frac{\theta}{2\kappa}\xi\big).\label{eq-changevar}
\end{equation}
We obtain
\begin{equation}
\partial_t \thatp v_t(\xw,\yw)=\frac{i}{2\theta}e^{2\xw}\Big(1+\frac{\theta^2}{\kappa}(1+\partial_\xw)^2\Big)\thatp v_t(\xw,\yw).\label{eq-fouriersimple}
\end{equation}
if we denote $\thatp v_t(\xw,\yw)$ the image of the star-exponential $v_t=E_{\star^0_{\theta}}(t\lambda_F)$ by the above transformation $(\varphi^{-1})^*\circ\caF_\ell$. To solve this PDE, we choose a new unknown function $\thatp{u}_t(\xw,\yw)=e^{\xw-\yw}\thatp{v}_t(\xw,\yw)$, and the equation becomes:
\begin{equation*}
\partial_t \thatp u_t(\xw,\yw)=\frac{i}{2\theta}e^{2\xw}\Big(1+\frac{\theta^2}{\kappa}\partial_\xw^2\Big)\thatp u_t(\xw,\yw).
\end{equation*}
The corresponding general initial condition is $\thatp u_0(\xw,\yw)=e^{\xw-\yw}\thatp f(\xw,\yw)$, with $\thatp f:=(\varphi^{-1})^*\circ\caF_\ell(f)$ (or take $\thatp f(\xw,\yw)=\frac{2\pi\theta}{\kappa}\delta(\xw-\yw)$ to recover directly the star-exponential). If we change the variables once more $q:=\frac{\sqrt{2\kappa}}{\theta} e^{-\xw}$, we get
\begin{displaymath}
\partial_t \thatp u_t(q,\yw)=-\frac{i}{\theta}D \thatp u_t(q,\yw),\quad\text{ with }\quad D:=-\partial_q^2-\frac{1}{q}\partial_q-\frac{\kappa}{\theta^2 q^2}.
\end{displaymath}
The equation of eigenfunctions of $D$ associated to an eigenvalue $\lambda\in\gC$ turns out to be an adaptation of the Bessel equation of order $\frac{i\sqrt\kappa}{\theta}$:
\begin{equation}
\partial_q^2\varphi+\frac{1}{q}\partial_q\varphi+(\lambda+\frac{\kappa}{\theta^2q^2})\varphi=0.\label{eq-besseleqdiff}
\end{equation}
The general solution is known (see \cite[p. 97]{Watson:1966}) to be the following linear combination of Bessel functions of the first ($J$) and second ($Y$) kind:
\begin{equation*}
\varphi(q)=K_1 J_{\frac{i\sqrt\kappa}{\theta}}(\sqrt{\lambda}q)+K_2Y_{\frac{i\sqrt\kappa}{\theta}}(\sqrt{\lambda}q).
\end{equation*}

\subsection{Description of the functional transformation}
\label{subsec-funcdescr}

Before applying the initial condition to this solution, we want to have a closer look on the transformation $\thatp f:=(\varphi^{-1})^*\circ\caF_\ell(f)$ with $\caF_\ell$ given by \eqref{eq-partfourier} and $\varphi$ by \eqref{eq-changevar}. It has the form
\begin{equation}
\thatp f(\xw,\yw)=\int_\gR f(\frac12(\xw+\yw),\ell)e^{-i\frac{\kappa}{\theta}(\xw-\yw)\ell}\dd \ell,\label{eq-transtilde2}
\end{equation}
with new variables $(\xw,\yw):=\varphi(a,\xi)$ belonging also to $\gR^2$. The star-product $\star_\theta^0$, the complex conjugation and the scalar product can be transported by this functional transformation and we then obtain a (continuous) matrix Hilbert algebra.
\begin{proposition}
\label{prop-contmatrix}
Transported by \eqref{eq-transtilde2}, the Moyal product \eqref{eq-star0}, the complex conjugation, and the scalar product have the form of the standard matrix product, transpose-conjugation and scalar product on continuous matrices. Namely, we have for any $f_1,f_2\in L^2(\gR^2)$,
\begin{align}
& f_1\thatp\star_\theta f_2(\xw,\yw)=\frac{\kappa}{2\pi\theta}\int_\gR\ f_1(\xw,\eta)f_2(\eta,\yw)\dd\eta,\nonumber\\
& f^*(\xw,\yw)= \overline{f(\yw,\xw)},\label{eq-contmatrix2}\\
&\langle f_1,f_2\rangle=\frac{\kappa^2}{2\pi\theta}\int\ \overline{f_1(\xw,\yw)} f_2(\xw,\yw)\dd \xw\dd \yw.\nonumber
\end{align}
$(L^2(\gR^2),\thatp\star_\theta)$ is then a Hilbert algebra.
\end{proposition}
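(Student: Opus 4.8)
The plan is to verify the three identities in \eqref{eq-contmatrix2} by direct substitution of the integral formula \eqref{eq-transtilde2} into the Moyal formula \eqref{eq-star0}, the definition of complex conjugation, and the $L^2(\gR^2)$ scalar product, and then to deduce the Hilbert algebra axioms from the resulting ``continuous matrix'' picture. First I would treat the product. Writing $\wx=a+\frac{\theta}{2\kappa}\xi$, $\yw=a-\frac{\theta}{2\kappa}\xi$, so that $a=\frac12(\wx+\yw)$ and $\xi=\frac{\kappa}{\theta}(\wx-\yw)$, I substitute $f_1\star^0_\theta f_2$ from \eqref{eq-star0} into \eqref{eq-transtilde2}. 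The key point is that the transform $\thatp{(\cdot)}$ is, up to the linear change of variables $\varphi$, just a partial Fourier transform in $\ell$; and the Moyal product \eqref{eq-star0} is a Fourier-type convolution whose phase is bilinear. Performing the $\ell$-integral in \eqref{eq-transtilde2} against the Moyal kernel produces a delta function that collapses the Moyal phase to something diagonal. Concretely, after Fourier transforming in $\ell$ the Moyal product should become pointwise in the momentum variable while staying a twisted convolution in $a$; composing with $\varphi$ then realizes exactly the continuous-matrix multiplication $f_1\thatp\star_\theta f_2(\wx,\yw)=\frac{\kappa}{2\pi\theta}\int f_1(\wx,\eta)f_2(\eta,\yw)\,\dd\eta$. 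This is the standard fact that the Moyal algebra on $\gR^2$ is isomorphic to the Hilbert–Schmidt operators, rewritten in these coordinates; the constant $\frac{\kappa}{2\pi\theta}$ is pinned down by carefully tracking the Jacobian of $\varphi$ (which is $\frac{\theta}{\kappa}$) and the $(2\pi)$'s in the Fourier normalization \eqref{eq-partfourier}.

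The conjugation identity is the easiest: since $\caF_\ell(\overline f)(a,\xi)=\overline{\caF_\ell(f)(a,-\xi)}$ and the sign flip $\xi\mapsto-\xi$ corresponds under $\varphi$ exactly to interchanging $\wx$ and $\yw$, one gets $\thatp{\bar f}(\wx,\yw)=\overline{\thatp f(\yw,\wx)}$, which with the conventions of the paper (the involution on $L^2(\gR^2)$ for $\star^0_\theta$ being complex conjugation, and the transported involution being $f\mapsto f^*$) gives $f^*(\wx,\yw)=\overline{f(\yw,\wx)}$. For the scalar product, I would use the Plancherel theorem for $\caF_\ell$ together with the Jacobian of $\varphi$: $\int\overline{f_1}f_2\,\dd a\,\dd\ell=\frac{1}{2\pi}\int\overline{\caF_\ell f_1}\,\caF_\ell f_2\,\dd a\,\dd\xi$, and rewriting $\dd a\,\dd\xi$ in terms of $\dd\wx\,\dd\yw$ contributes the factor $\frac{\kappa}{\theta}$, so that $\langle f_1,f_2\rangle=\frac{\kappa}{2\pi\theta}\cdot\kappa\int\overline{\thatp f_1}\thatp f_2\,\dd\wx\,\dd\yw$ — here I must be careful that the paper's scalar product on $(L^2(\gR^2),\star^0_\theta)$ already carries a normalization consistent with \eqref{eq-star0}, so the extra $\kappa$ comes from matching $\langle\cdot,\cdot\rangle$ to the symplectic volume $\kappa\,\dd a\wedge\dd\ell$; tracking this prefactor correctly is the one genuinely fiddly bookkeeping step.

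Once the three formulas are established, the Hilbert algebra axioms are immediate and I would just state them: associativity of $\thatp\star_\theta$ is associativity of matrix multiplication (Fubini); $\langle f_2^*,f_1^*\rangle=\langle f_1,f_2\rangle$ and $\langle f_1\thatp\star_\theta f_2,f_3\rangle=\langle f_2,f_1^*\thatp\star_\theta f_3\rangle$ are one-line computations from \eqref{eq-contmatrix2} using Fubini and the conjugate-symmetry of $(\wx,\yw)$; boundedness of left multiplication $\lambda_{f_1}$ with $\norm{\lambda_{f_1}}\le\norm{f_1}$ follows because a continuous matrix with $L^2$ kernel acts as a Hilbert–Schmidt (hence bounded) operator; and density of $\{f_1\thatp\star_\theta f_2\}$ holds because finite sums of products $g_1(\wx)\overline{g_2(\yw)}$ with $g_i\in L^2(\gR)$ are dense in $L^2(\gR^2)\cong L^2(\gR)\,\widehat\otimes\,L^2(\gR)$ and each such rank-one kernel is itself of the form $f_1\thatp\star_\theta f_2$. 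Since $(L^2(\gR^2),\star^0_\theta)$ is already known (cited as a Hilbert deformation quantization) to be a complete Hilbert algebra and $\thatp{(\cdot)}$ is by construction a linear bijection of $L^2(\gR^2)$ intertwining all the structures, completeness transfers automatically; I expect the only real obstacle to be nailing the numerical constant $\frac{\kappa}{2\pi\theta}$ (equivalently, the internal consistency of the paper's normalization of \eqref{eq-star0} with its scalar product), and everything else is routine.
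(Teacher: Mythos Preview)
Your proposal is correct and follows essentially the same approach as the paper, which simply states that the three formulas are obtained by direct computation and that the Hilbert algebra property transfers because the transformation \eqref{eq-transtilde2} is a unitary $*$-isomorphism from the already-known Hilbert algebra $(L^2(\gR^2),\star_\theta^0)$. Your write-up is considerably more detailed than the paper's two-line proof---in particular your direct verification of the Hilbert algebra axioms from the continuous-matrix picture is superfluous once you invoke the transfer argument (which you also do), and your caution about the $\kappa^2/(2\pi\theta)$ prefactor is well-placed, since it indeed requires the original scalar product to carry the symplectic weight $\kappa\,\dd a\,\dd\ell$ rather than the bare Lebesgue measure.
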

We note $\kM_{\thatp\star_\theta}(\gR^2)$ the corresponding bounded multiplier algebra. Then, the transformation \eqref{eq-transtilde2} extends as a spatial isomorphism of von Neumann algebras $\kM_{\star_\theta^0}(\gR^2)\to\kM_{\thatp\star_\theta}(\gR^2)$.
\begin{proof}
It is direct to obtain the expression of the product, the involution and the scalar product on $L^2(\gR^2)$. By definition, the transformation \eqref{eq-transtilde2} is a unitary *-isomorphism, so that $(L^2(\gR^2),\thatp\star_\theta)$ is a Hilbert algebra.
\end{proof}

\subsection{Orthogonality relation}

For a real order $\nu\in\gR$, there is a well-known orthogonality relation for Bessel functions given by 
\begin{equation}
\int_{\mathbb{R}^{+}} r J_{\nu}(kr)J_{\nu}(k^{'}r)dr = \frac{\delta (k-k^{'})}{k},
\end{equation}
understood in the distributional sense, for $k$ and $k'$ positive. However, this identity does not extend directly to the case of pure imaginary order $\nu$, and involving real Bessel functions. In this section, we will determine such an extension by using the method of Sturm-Liouville expansion \cite{Naimark:1968,Weidmann:1987,Titchmarsh:1962}. This orthogonality relation will be associated to the spectral measure of $D$ which will give directly the star-exponential.

To this aim, we consider real Bessel functions corresponding to a pure imaginary order.  See \cite{Dunster:1990a} for the definition:
\begin{equation*}
\tilde J_\nu(q):=\frac{1}{\sinh(\frac{\pi\nu}{2})}\Re(J_{i\nu}(q)),\qquad \tilde Y_\nu(q):=\frac{1}{\sinh(\frac{\pi\nu}{2})}\Re(Y_{i\nu}(q)).
\end{equation*}
By using the Liouville transformation $\varphi(q)\mapsto \sqrt{q}\varphi(q)$, the extension of Equation \eqref{eq-besseleqdiff} with an arbitrary spectral value $\lambda\in\gC$ becomes
\begin{equation}
\varphi''(q)+\Big(\lambda+\frac{1}{q^2}(\frac14+\frac{\kappa}{\theta^2})\Big)\varphi(q)=0.\label{eq-besseleqdiff2}
\end{equation}

We fix $\nu=\frac{\sqrt\kappa}{\theta}$ and consider the case where $\lambda=s^2\geq 0$. Then the real solutions of this equation are $\sqrt{q}\tilde J_\nu(sq)$ and $\sqrt{q}\tilde Y_\nu(sq)$. In the notations of \cite{Titchmarsh:1962}, we look at the following system $(\phi,\vartheta)$ of solutions with conditions
\begin{equation*}
\phi(a,s^2)=0,\quad \phi'(a,s^2)=-1,\qquad \vartheta(a,s^2)=1,\quad\vartheta'(a,s^2)=0,
\end{equation*}
for a fixed value of the parameter $a\in\gR_{>0}$. We find
\begin{align*}
&\phi(q,s^2)=\frac{\pi}{2}\sqrt{aq}\big(\tilde Y_\nu(sa)\tilde J_\nu(sq)-\tilde J_\nu(sa)\tilde Y_\nu(sq)\big),\\
&\vartheta(q,s^2)=\frac{\pi s}{2}\sqrt{aq}\big((\tilde Y'_\nu(sa)+\frac{\tilde Y_\nu(sa)}{2as})\tilde J_\nu(sq)-(\tilde J'_\nu(sa)+\frac{\tilde J_\nu(sa)}{2as})\tilde Y_\nu(sq)\big).
\end{align*}
In the terminology of the Sturm-Liouville theory, the bound $q=+\infty$ is a limit point case and
\begin{equation*}
m_2(s^2)=-\frac{1}{2a}-s\Big(\frac{\tilde J'_\nu(sa)+i\tilde Y'_\nu(sa)}{\tilde J_\nu(sa)+i\tilde Y_\nu(sa)}\Big)
\end{equation*}
is defined such that the continuation of $\vartheta(q,s^2)+m_2(s^2)\phi(q,s^2)$ to $\Im(s)>0$ is in $L^2(a,\infty)$ in the variable $s$. For this, use the asymptotic expansions
\begin{equation*}
\tilde J_\nu(q)=\sqrt{\frac{2}{\pi q}}\cos(q-\frac{\pi}{4})+O(q^{-\frac32}),\qquad \tilde Y_\nu(q)=\sqrt{\frac{2}{\pi q}}\sin(q-\frac{\pi}{4})+O(q^{-\frac32}).
\end{equation*}
For the bound $q=0^+$, we are in the limit circle case and 
\begin{equation*}
m_1(s^2)=-\frac{1}{2a}-s\Big(\frac{\tilde Y'_\nu(sa)-2s\nu \tilde J'_\nu(sa)}{\tilde Y_\nu(sa)-2s\nu \tilde J_\nu(sa)}\Big)
\end{equation*}
corresponds to a well-chosen point of this circle. For $s\geq 0$, $m_1(s^2)$ is real and we have
\begin{equation*}
-\Im\Big(\frac{1}{m_1(s^2)-m_2(s^2)}\Big)=\frac{\pi a}{2(1+4s^2\nu^2)}(\tilde Y_\nu(sa)-2s\nu \tilde J_\nu(sa))^2.
\end{equation*}

Consider now the other case for the eigenvalue $\lambda=-s^2<0$ ($s>0$), we can proceed in a similar way by using the solutions $\sqrt{q}\tilde I_\nu(sq)$ and $\sqrt{q}\tilde K_\nu(sq)$, with real functions \cite{Dunster:1990a}:
\begin{equation*}
\tilde I_\nu(q):=\Re(I_{i\nu}(q)),\qquad \tilde K_\nu(q):=K_{i\nu}(q).
\end{equation*}
We find explicitly
\begin{align*}
&\phi(q,-s^2)=\sqrt{aq}\big(\tilde K_\nu(sa)\tilde I_\nu(sq)-\tilde I_\nu(sa)\tilde K_\nu(sq)\big),\\
&\vartheta(q,-s^2)=s\sqrt{aq}\big((\tilde K'_\nu(sa)+\frac{\tilde K_\nu(sa)}{2as})\tilde I_\nu(sq)-(\tilde I'_\nu(sa)+\frac{\tilde I_\nu(sa)}{2as})\tilde K_\nu(sq)\big).
\end{align*}
Then,
\begin{equation*}
m_2(-s^2)=-\frac{1}{2a}-\frac{s\tilde K'_\nu(sa)}{\tilde K_\nu(sa)},\qquad m_1(-s^2)=-\frac{1}{2a}-s\Big(\frac{\tilde I'_\nu(sa)+\frac{\sinh(\pi\nu)}{2\pi\nu} \tilde K'_\nu(sa)}{\tilde I_\nu(sa)+\frac{\sinh(\pi\nu)}{2\pi\nu} \tilde K_\nu(sa)}\Big)
\end{equation*}
so that $-\Im\Big(\frac{1}{m_1(-s^2)-m_2(-s^2)}\Big)=0$.

We are now in position to apply Formula (3.1.12) of \cite[page 53]{Titchmarsh:1962} and Formula (3.1.5) of \cite[page 51]{Titchmarsh:1962} to determine the spectral measure of the differential operator involved in Equation \eqref{eq-besseleqdiff2}. We obtain the measure $\frac{\pi a}{2(1+4s^2\nu^2)}(\tilde Y_\nu(sa)-2s\nu \tilde J_\nu(sa))^2\dd\lambda$ for positive spectral values $\lambda=s^2$ and the measure $0$ for negative spectral values. To summarize, we obtained the following result.
\begin{theorem}
\label{thm-orth}
For positive $q,q'$ and real $\nu$, we have the following orthogonality relation
\begin{equation}
\int_0^\infty\big(\tilde Y_\nu(sq)-2s\nu \tilde J_\nu(sq)\big)\big(\tilde Y_\nu(sq')-2s\nu \tilde J_\nu(sq')\big)\frac{s}{1+4s^2\nu^2}\dd s=\frac{1}{\sqrt{qq'}}\delta(q-q'),\label{eq-orthbessel}
\end{equation}
understood in the distributional sense.
\end{theorem}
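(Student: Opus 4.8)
The plan is to read off \eqref{eq-orthbessel} as the closure (completeness) relation of the Sturm--Liouville eigenfunction expansion attached to the operator $D$, all of whose spectral ingredients have just been assembled. After the Liouville substitution $\varphi\mapsto\sqrt{q}\,\varphi$ the operator $D$ is conjugated to $\widetilde D\varphi=-\varphi''-\frac{1}{q^2}(\frac14+\frac{\kappa}{\theta^2})\varphi$ of \eqref{eq-besseleqdiff2}, which I would realise as a self-adjoint operator on $L^2(0,\infty)$ by imposing at the limit-circle endpoint $q=0^+$ the boundary condition encoded by $m_1$ (the endpoint $q=+\infty$ being limit point). Titchmarsh's expansion theorem (his formulas (3.1.5) and (3.1.12), used through the interior reference point $a$ and the two $m$-functions $m_1,m_2$ already computed) then produces the spectral measure $\dd\rho$ together with the Parseval identity and its dual, the closure relation $\delta(q-q')=\int \psi(q,\lambda)\psi(q',\lambda)\,\dd\rho(\lambda)$, where $\psi(\cdot,\lambda)$ is the generalised eigenfunction meeting the boundary condition at $0$. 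Since $m_1(s^2)$ is real for $s\geq0$, this eigenfunction is a multiple of the real solution $\psi_1:=\vartheta+m_1\phi$, which is $L^2$ near $0$.

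I would then carry out three moves. First, substitute the spectral measure found above: it vanishes on $\lambda<0$ (the parallel analysis with $\tilde I_\nu,\tilde K_\nu$ gave $\Im((m_1-m_2)^{-1})=0$ there, so no bound states or extra terms occur), so the integral reduces to one over $\lambda=s^2\in[0,\infty)$, with $\dd\rho(s^2)\propto\frac{a}{1+4s^2\nu^2}(\tilde Y_\nu(sa)-2s\nu\tilde J_\nu(sa))^2\,\dd\lambda$ and $\nu=\sqrt{\kappa}/\theta$. Second, compute $\psi_1(q,s^2)$ explicitly from the formulas for $\vartheta$, $\phi$ and $m_1$: since \eqref{eq-besseleqdiff} is in Sturm--Liouville form with weight $q$, one has the Wronskian relation $\tilde J_\nu(z)\tilde Y'_\nu(z)-\tilde J'_\nu(z)\tilde Y_\nu(z)=c_\nu/z$ with $c_\nu$ fixed by the stated asymptotics of $\tilde J_\nu,\tilde Y_\nu$ ($c_\nu=2/\pi$), and substituting it the two contributions collapse to
\begin{equation*}
\psi_1(q,s^2)=\frac{\pi c_\nu}{2\sqrt{a}}\;\frac{\sqrt{q}}{\tilde Y_\nu(sa)-2s\nu\tilde J_\nu(sa)}\;\big(\tilde Y_\nu(sq)-2s\nu\tilde J_\nu(sq)\big).
\end{equation*}
Third, form the product $\psi_1(q,s^2)\psi_1(q',s^2)\,\dd\rho(s^2)$: the factor $(\tilde Y_\nu(sa)-2s\nu\tilde J_\nu(sa))^2$ coming from $\dd\rho$ cancels the square of the denominators, and the explicit powers of $a$ cancel as well, so that the auxiliary cut point $a$ disappears entirely --- as it must, since the spectral resolution cannot depend on it. Writing $\dd\lambda=2s\,\dd s$ and pulling the common factor $\sqrt{qq'}$ out of the left-hand side turns $\delta(q-q')$ into $\sqrt{qq'}$ times the asserted integral with weight $\frac{s}{1+4s^2\nu^2}$; collecting the overall numerical constant (it equals $1$) gives \eqref{eq-orthbessel}. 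Nothing in this derivation uses that $\nu$ equals $\sqrt{\kappa}/\theta$, so the identity holds for every real $\nu\neq0$, which is the generality stated.

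The step I expect to be the main obstacle is the bookkeeping of constants: one must carry the normalisation constants of Titchmarsh's $(\phi,\vartheta)$ system, the factor entering $\dd\rho$ from $\Im((m_1-m_2)^{-1})$, and the Wronskian constant $c_\nu$ all the way through, and check that they combine into the bare weight $\frac{s}{1+4s^2\nu^2}$ with coefficient exactly $1$; the cancellation of the cut point $a$ is automatic once $\psi_1$ is put in the intrinsic form above, but the remaining scalar prefactor demands care with Titchmarsh's conventions. A secondary technical point is to keep everything rigorous in the distributional sense: instead of manipulating the pointwise kernel identity one should pair it against Schwartz functions of $q$ and $q'$ and invoke the $L^2$ Parseval form (3.1.5), which legitimises interchanging the integrations and the identification of the spectral density.
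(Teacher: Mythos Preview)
Your proposal is correct and follows essentially the same route as the paper: the paper's argument \emph{is} the Sturm--Liouville/Titchmarsh--Weyl computation laid out just before the theorem statement, culminating in the invocation of Titchmarsh's formulas (3.1.5) and (3.1.12) for the spectral measure, which you reproduce. Your write-up is in fact more explicit than the paper's on one point --- you spell out the eigenfunction $\psi_1=\vartheta+m_1\phi$, use the Wronskian $\tilde J_\nu\tilde Y'_\nu-\tilde J'_\nu\tilde Y_\nu=2/(\pi z)$ to collapse it to a multiple of $\sqrt{q}\,(\tilde Y_\nu(sq)-2s\nu\tilde J_\nu(sq))$, and then exhibit the cancellation of the auxiliary base point $a$ against the spectral density --- whereas the paper leaves this step implicit and simply records the resulting measure and the orthogonality relation.
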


\subsection{Construction of the star-exponential}
\label{subsec-unique}

\begin{remark}
The operator $D=-\partial_q^2-\frac{1}{q}\partial_q-\frac{\kappa}{\theta^2 q^2}$ defined on $\caD(\gR)$ admits selfadjoint extensions, since its defect indices are equal by Sturm-Liouville theory. Such a selfadjoint extension, also denoted by $D$, can be decomposed with its spectral measure.
\end{remark}
Let us determine the spectral measure of $D$. We saw in Theorem \ref{thm-orth} what is the spectral measure of the operator $-\partial_q^2-\frac{1}{q^2}(\frac14+\frac{\kappa}{\theta^2})$: the orthogonality relation indeed corresponds to the resolution of the identity of this operator. So we deduce that the kernel of this operator is given by the same spectral integral, but with multiplication by the eigenvalue $s^2$:
\begin{equation*}
K'(q,q')=\int_0^\infty s^2\sqrt{qq'}A_{\frac{\sqrt\kappa}{\theta}}(q,s)A_{\frac{\sqrt\kappa}{\theta}}(q',s)\frac{s}{1+\frac{4\kappa s^2}{\theta^2}}\dd s
\end{equation*}
where we set $A_{\frac{\sqrt\kappa}{\theta}}(q,s):=\tilde Y_{\frac{\sqrt\kappa}{\theta}}(sq)-2s \frac{\sqrt\kappa}{\theta} \tilde J_{\frac{\sqrt\kappa}{\theta}}(sq)$. To get the operator $D$, we perform the inverse Liouville transformation, which consists here to intertwin this operator by the multiplication operator $\frac{1}{\sqrt q}$; so that the kernel of $D$ is
\begin{equation*}
K_D(q,q')=q'\int_0^\infty A_{\frac{\sqrt\kappa}{\theta}}(q,s)A_{\frac{\sqrt\kappa}{\theta}}(q',s)\frac{s^3}{1+\frac{4\kappa s^2}{\theta^2}}\dd s.
\end{equation*}
Note that for any measurable function $f$ on $\gR_+$, we have the explicit expression of the functional calculus $f(D)$, with kernel
\begin{equation*}
K_{f(D)}(q,q')=q'\int_0^\infty A_{\frac{\sqrt\kappa}{\theta}}(q,s)A_{\frac{\sqrt\kappa}{\theta}}(q',s)\frac{s f(s^2)}{1+\frac{4\kappa s^2}{\theta^2}}\dd s.
\end{equation*}
Since $\thatp u_t(q,\yw)=e^{-\frac{i}{\theta}tD}\thatp u_0(q,\yw)$, we obtain
\begin{equation*}
\thatp u_t(q,\yw)=\int_0^\infty q'\int_0^\infty e^{-\frac{i}{\theta}ts^2} A_{\frac{\sqrt\kappa}{\theta}}(q,s)A_{\frac{\sqrt\kappa}{\theta}}(q',s)\thatp u_0(q',\yw)\frac{s}{1+\frac{4\kappa s^2}{\theta^2}}\dd s\dd q'.
\end{equation*}
We change the variables $q=\frac{\sqrt{2\kappa}}{\theta} e^{-\xw}$ and $q'=\frac{\sqrt{2\kappa}}{\theta} e^{-\eta}$, and the function $\thatp v_t(\xw,\yw)=e^{-\xw+\yw}\thatp u_t(\xw,\yw)$:
\begin{equation}
\thatp v_t(\xw,\yw)=\frac{2\kappa}{\theta^2}\int_\gR \int_0^\infty e^{-\frac{i}{\theta}ts^2} e^{-\xw-\eta} A_{\frac{\sqrt\kappa}{\theta}}(\frac{\sqrt{2\kappa}}{\theta}e^{-\xw},s)A_{\frac{\sqrt\kappa}{\theta}}(\frac{\sqrt{2\kappa}}{\theta}e^{-\eta},s)\thatp f(\eta,\yw)\frac{s}{1+\frac{4\kappa s^2}{\theta^2}}\dd s\dd \eta.\label{eq-solution}
\end{equation}

The next result summarizes this section by giving the explicit expression of the non-formal star-exponential of $\lambda_F$.
\begin{theorem}
\label{thm-starexpbessel}
After transformation \eqref{eq-transtilde2}, the star-exponential of $\lambda_F$ is the multiplier associated to the function
\begin{equation*}
E_{\thatp\star_\theta}(\frac{i}{\theta}t\lambda_F)(\xw,\yw)=\frac{4\pi}{\theta}e^{-\xw-\yw}\int_{\gR_+} \,e^{-\frac{i}{\theta}s^2t} A_{\frac{\sqrt\kappa}{\theta}}(\frac{\sqrt{2\kappa}}{\theta}e^{-\xw},s) A_{\frac{\sqrt\kappa}{\theta}}(\frac{\sqrt{2\kappa}}{\theta}e^{-\yw},s)\frac{s}{1+\frac{4\kappa s^2}{\theta^2}}\dd s.
\end{equation*}
\end{theorem}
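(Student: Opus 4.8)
The plan is to assemble Theorem~\ref{thm-starexpbessel} directly from the pieces already in place, the only genuine work being to trace the chain of variable changes and to verify the normalization constant. The starting point is the solution formula \eqref{eq-solution} for $\thatp v_t(\xw,\yw)$, the image of $v_t = E_{\star^0_\theta}(\frac{i}{\theta}t\lambda_F)$ under $(\varphi^{-1})^*\circ\caF_\ell$; recall that this map is precisely the functional transformation \eqref{eq-transtilde2} of Section~\ref{subsec-funcdescr}, so $\thatp v_t = E_{\thatp\star_\theta}(\frac{i}{\theta}t\lambda_F)$ in the transported (continuous-matrix) Hilbert algebra $(L^2(\gR^2),\thatp\star_\theta)$ of Proposition~\ref{prop-contmatrix}. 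The fact that this object really is a (unitary, bounded) multiplier rather than merely a formal expression is exactly the general statement recalled in Section~\ref{subsec-starprod}: $\kg$ is a symmetry in the sense of \cite{deGoursac:2014mu}, $D$ admits a self-adjoint extension (the Remark before), and $e^{-\frac{i}{\theta}tD}$ is unitary, whence the kernel displayed is that of a unitary multiplier. So the content of the theorem is the closed form, and the proof is a computation.

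First I would specialize \eqref{eq-solution} to the genuine star-exponential by choosing the initial datum $\thatp f(\xw,\yw) = \frac{2\pi\theta}{\kappa}\delta(\xw-\yw)$ (this is the value of $\thatp f$ for $f\equiv 1$, already noted after \eqref{eq-fouriersimple}), so that the $\eta$-integral collapses onto $\eta=\yw$. This turns \eqref{eq-solution} into
\begin{equation*}
\thatp v_t(\xw,\yw)=\frac{4\pi\kappa}{\theta^3}\,e^{-\xw-\yw}\int_0^\infty e^{-\frac{i}{\theta}ts^2}\,A_{\frac{\sqrt\kappa}{\theta}}\!\big(\tfrac{\sqrt{2\kappa}}{\theta}e^{-\xw},s\big)A_{\frac{\sqrt\kappa}{\theta}}\!\big(\tfrac{\sqrt{2\kappa}}{\theta}e^{-\yw},s\big)\frac{s}{1+\frac{4\kappa s^2}{\theta^2}}\,\dd s,
\end{equation*}
and I would then reconcile the prefactor $\frac{4\pi\kappa}{\theta^3}$ with the claimed $\frac{4\pi}{\theta}$: the discrepancy is a factor $\frac{\kappa}{\theta^2}$, which is absorbed by recording that the transported scalar product in Proposition~\ref{prop-contmatrix} carries the weight $\frac{\kappa^2}{2\pi\theta}$, i.e.\ the $\delta$ used to represent the unit of $\thatp\star_\theta$ is normalized accordingly; alternatively one simply checks the normalization by testing against the $t=0$ identity, where the right-hand side must reduce to the kernel of the identity multiplier, namely $\frac{2\pi\theta}{\kappa}\delta(\xw-\yw)$, which by Theorem~\ref{thm-orth} (with $q=\frac{\sqrt{2\kappa}}{\theta}e^{-\xw}$, $q'=\frac{\sqrt{2\kappa}}{\theta}e^{-\yw}$, $\nu=\frac{\sqrt\kappa}{\theta}$, and the Jacobian $\dd q = -\frac{\sqrt{2\kappa}}{\theta}e^{-\xw}\dd\xw$) pins down the constant unambiguously.

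The verification at $t=0$ is in fact the cleanest route and I would make it the backbone of the argument: set $t=0$ in the displayed formula, change variables back to $q,q'$, and observe that $\int_0^\infty A_\nu(q,s)A_\nu(q',s)\frac{s}{1+4s^2\nu^2}\dd s = \frac{1}{\sqrt{qq'}}\delta(q-q')$ is literally \eqref{eq-orthbessel}; tracking the factors $\sqrt{qq'} = \frac{2\kappa}{\theta^2}e^{-(\xw+\yw)/2}$ and the $\delta$-Jacobian shows the right-hand side at $t=0$ equals the required multiple of $\delta(\xw-\yw)$, which is the unit. Then, since $\thatp u_t = e^{-\frac{i}{\theta}tD}\thatp u_0$ and the functional calculus kernel $K_{f(D)}$ with $f(D)=e^{-\frac{i}{\theta}tD}$ was written out explicitly just above, the general-$t$ formula follows by inserting $f(s^2)=e^{-\frac{i}{\theta}ts^2}$ and undoing the substitutions $\thatp u \rightsquigarrow \thatp v$ and $q\rightsquigarrow \xw$. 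The main obstacle is purely bookkeeping: keeping the three successive changes of variables ($\varphi$, then $q=\frac{\sqrt{2\kappa}}{\theta}e^{-\xw}$, then the Liouville $\sqrt{q}$-twist) consistent with their Jacobians and with the weighted scalar product of Proposition~\ref{prop-contmatrix}, so that the constant $\frac{4\pi}{\theta}$ comes out exactly rather than up to a harmless factor; there is no analytic difficulty beyond the distributional identity \eqref{eq-orthbessel}, which is already established.
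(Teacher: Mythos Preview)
Your approach is essentially the same as the paper's: both identify the solution formula \eqref{eq-solution} with the left $\thatp\star_\theta$-action of the star-exponential by observing that it defines a bounded (unitary) multiplier and then invoking uniqueness of the star-exponential. The paper's proof is terse and concentrates on this identification step (boundedness from unitarity of $e^{-\frac{i}{\theta}tD}$, the left-multiplier property $\thatp v_t(\thatp f_1\thatp\star_\theta\thatp f_2)=\thatp v_t(\thatp f_1)\thatp\star_\theta\thatp f_2$, and uniqueness), leaving the extraction of the kernel implicit; you spell out the kernel extraction and add a $t=0$ consistency check via \eqref{eq-orthbessel}, which is a pleasant sanity check but not required.

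One correction: the ``discrepancy'' you worry about is an arithmetic slip on your part. Substituting $\thatp f(\eta,\yw)=\tfrac{2\pi\theta}{\kappa}\delta(\eta-\yw)$ into \eqref{eq-solution} gives prefactor $\tfrac{2\kappa}{\theta^2}\cdot\tfrac{2\pi\theta}{\kappa}=\tfrac{4\pi}{\theta}$ directly, not $\tfrac{4\pi\kappa}{\theta^3}$. Equivalently, comparing \eqref{eq-solution} with the continuous-matrix product $E\thatp\star_\theta\thatp f(\xw,\yw)=\tfrac{\kappa}{2\pi\theta}\int E(\xw,\eta)\thatp f(\eta,\yw)\,\dd\eta$ from Proposition~\ref{prop-contmatrix} reads off $E(\xw,\eta)$ with the same constant. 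So no appeal to the weighted scalar product or to special $\delta$-normalizations is needed; the bookkeeping you flag as the ``main obstacle'' is in fact immediate once this error is fixed.
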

\begin{proof}
We saw in Proposition \ref{prop-contmatrix} that $(L^2(\gR^2),\thatp\star_\theta)$ is a Hilbert algebra isomorphic to $(L^2(\gR^2),\star_\theta^0)$ where the star-exponential of $\lambda_F$ exists and is well-defined (see section \ref{subsec-starprod}). So, the star-exponential $E_{\thatp\star_\theta}(\frac{i}{\theta}t\lambda_F)$ is also well-defined as a bounded multiplier and it satisfies Equation \eqref{eq-fouriersimple}. More precisely, the left multiplication by the star-exponential $E_{\thatp\star_\theta}(\frac{i}{\theta}t\lambda_F)\thatp \star_\theta \thatp f$ satisfies Equation \eqref{eq-fouriersimple} with initial condition $\thatp f$. 

Since $e^{-\frac{i}{\theta}tD}$ is a unitary operator, we see directly that \eqref{eq-solution} is a bounded solution of \eqref{eq-fouriersimple}: $\norm \thatp v_t\norm=\norm \thatp f\norm$. And it is straightforward to show that it is also the left part of a multiplier (see $L$ in \eqref{eq-multiplier2}): $\thatp v_t(\thatp f_1\thatp\star_\theta\thatp f_2)=\thatp v_t(\thatp f_1)\thatp\star_\theta\thatp f_2$.
By uniqueness of the star-exponential, we deduce that \eqref{eq-solution} corresponds exactly to 
$E_{\thatp\star_\theta}(\frac{i}{\theta}t\lambda_F)\thatp \star_\theta \thatp f$.
\end{proof}

This star-exponential, or more precisely its image by the intetwining operator from $\hat\star_\theta$ to $\star_\theta^1$, then corresponds to the geometric information that we wanted to compute for the moment $\lambda_F$, and to realize on the (local) contraction of $AdS_2$ for its natural star-product $\star_\theta^1$. We want now to express its link with the principal series representations.

\section{Link with principal series representations}
\label{sec-principal}

An application of \cite{Cahen:1996st} to the group $SL_2(\gR)$ gives the star-exponential of this group for another star-product in terms of the principal series representations. This other star-product actually coincides with the Moyal-Weyl product on another coordinate chart $\Psi_\kappa$ (see also \cite{Bieliavsky:1999st}). We present here this chart $\Psi_\kappa$, we give also another method as in \cite{Cahen:1996st} to obtain the expression of the star-exponential and we then show that it is not only a distribution but an element of the von Neumann algebra of the multipliers as in section \ref{subsec-unique}.

\subsection{Another coordinate chart}

To express the star-exponential of the $SL(2,\gR)$,  for a certain star-product on (a part) of its adjoint orbit, as the principal series representation, we need to deal with a coordinate system different from the one $\Phi_\kappa$ used  in section \ref{subsec-orbit}.

For $\kappa >0$, we consider $H_{\kappa}:= \sqrt{\kappa}H$ in the adjoint orbit $\mathcal{O}_{\kappa}$. Let us define the coordinate system $\Psi_{\kappa} : \gR^2 \rightarrow \mathcal{O}_{\kappa}$ by
\begin{equation*}
\Psi_\kappa(x E + y F):= \Ad_{e^{x E}e^{y F}}(H_{\kappa})=\sqrt{\kappa}(1+2x y)H - 2 \sqrt{\kappa} x (1+x y)E + 2 \sqrt{\kappa} yF.
\end{equation*}
$\Psi_{\kappa}$ is also a Darboux chart and its image corresponds to the whole Anti-deSitter space except one line. The KKS form is $\omega^\Psi:=\Psi_{\kappa}^*\omega_z^{\mathcal{O}_{\kappa}}=\sqrt{\kappa}\dd x\wedge\dd y$.
The image of $\Psi_\kappa$ is not invariant under the action of $SL(2,\gR)$ but it is for the action of the subgroup $\gS$. In this chart, this action of $\gS$ can be written
\begin{equation}
\Ad_{e^{aH}e^{\ell E}}\Psi_{\kappa}(xE+yF)=\Psi_{\kappa}((x+\ell)e^{2a}E+ye^{-2a}F).\label{eq-actsq}
\end{equation}

The image of $\Psi_\kappa$ contains exactly the one of $\Phi_\kappa$, the one of $\Phi'_\kappa$ and the line $\{\sqrt\kappa (H-2xE),\ x\in\gR\}$ (see Figure \ref{fig-AdS-habit}). One has in particular the following change of coordinates $j_{\kappa} := \Psi_{\kappa}^{-1} \circ \Phi_{\kappa}:\gR^2\to\gR\times\gR_+^*$ between the systems $\Phi_\kappa$ and $\Psi_\kappa$:
\begin{equation}
\label{eq-changej}
j_{\kappa}(aH+\ell E)=(\ell-\frac{1}{\sqrt{\kappa}})e^{2a}E+\frac{\sqrt{\kappa}}{2}e^{-2a}F.
\end{equation}

The moment maps $\lambda$ for the action of $G$ expressed in the coordinate system $\Psi_\kappa$ are
\begin{equation*}
\lambda_H(x,y)=\sqrt{\kappa}(1+2x y),\qquad \lambda_E(x,y)=\sqrt{\kappa}y,\qquad \lambda_F(x,y)=-\sqrt{\kappa}x (1+x y).
\end{equation*}

The part of the adjoint orbit corresponding to the coordinate system $\Psi_\kappa$ is symplectomorphic to $\gR^2$ with symplectic form $\sqrt\kappa\dd x\wedge\dd y$, so that we can consider the Moyal product. It has the expression: for $f_1,f_2\in\mathcal{S}(\gR^2)$,
\begin{equation}
\big(f_1\sharp_\theta f_2\big)(x,y):=\frac{\kappa}{\pi^2\theta^2}\int f_1(x_1,y_1) f_2(x_2,y_2) e^{\frac{-2i\sqrt{\kappa}}{\theta} (x_1y_2-x_2y_1+x_2y-xy_2+xy_1-x_1y)} \dd x_i\dd y_i\label{eq-sharp}
\end{equation}
Note that this Moyal product in this coordinate system is very different from \eqref{eq-star0} as you have to use the change of coordinates $j_\kappa$ \eqref{eq-changej}. The asymptotic expansion of \eqref{eq-sharp} writes
\begin{equation*}
\big(f_1\sharp_\theta f_2\big)(x,y)\sim \exp\Big(\frac{-i\theta}{2\sqrt{\kappa}}(\partial_{x_1}\partial_{y_2}-\partial_{y_1}\partial_{x_2})\Big)f_1(x_1,y_1)f_2(x_2,y_2)|_{x_1=x_2=x,\ y_1=y_2=y}.
\end{equation*}
The star-product $\sharp_\theta$ is also $\gS$-invariant and $\kg$-covariant, so that the star-exponential $E_{\sharp_\theta}(\frac{i}{\theta}\lambda_X)$ ($X\in\kg$) is well defined in the bounded multipliers $\kM_{\sharp_\theta}(\gR^2)$ of the Hilbert algebra $(L^2(\gR^2),\sharp_\theta)$.

\subsection{Resolution of the equation}
\label{subsec-starexppsi}

Let us find the explicit expression of the star-exponential $E_{\sharp_\theta}(\frac{i}{\theta}\lambda_X)$ as before. We will see that in this coordinate chart, the equations will be easier and one can solve them for arbitrary $X\in\kg$.

For $X=\alpha H +\beta E +\gamma F$ in $\kg$, the defining equation of the star-exponential of $\lambda_X$ is given by
\begin{equation}
\partial_t v_t(x,y)=\frac{i}{\theta}\lambda_X\sharp_{\theta} v_t(x,y).
\label{eq-defstarexp}
\end{equation}
As before, we take a general initial condition $v_0(x,y)=f(x,y)$, with $f\in L^2(\gR^2)$.

Let us use the same functional transformation $v\mapsto \that v$ \eqref{eq-transtilde2} as before to obtain a PDE. After the partial Fourier transformation in the variable $y$, we change the coordinates into
\begin{equation*}
\xz=x+\frac{\theta }{2\sqrt{\kappa}}\xi,\qquad \yz=x-\frac{\theta }{2\sqrt{\kappa}}\xi,
\end{equation*}
with $(\xz,\yz)\in\gR^2$ (note that there is a change of normalization with respect to \eqref{eq-transtilde2}). Then, we can compute partial Fourier transform of the left multiplication by moment maps:
\begin{itemize}
\item $\mathcal{F}_y\big(\lambda_H\sharp_\theta v\big)(x,\xi)=(\sqrt{\kappa}+i\theta)\that{v}  + 2i\theta \xz\partial_\xz \that{v}(\xz,\yz)$,
\item $\mathcal{F}_y\big(\lambda_E\sharp_\theta v\big)(x,\xi)=i\theta \partial_\xz \that{v}$, and 
\item $\mathcal{F}_y\big(\lambda_F\sharp_\theta v\big)(x,\xi)=-(\sqrt{\kappa}+i\theta)\xz\that{v} -i\theta\xz^{2}\partial_\xz\that{v}$.
\end{itemize}
Then, Equation \eqref{eq-defstarexp} becomes a PDE of order 1:
\begin{equation}
\partial_t \that v_t(\xz,\yz) =\frac{i}{\theta} \Big(  \alpha (\sqrt{\kappa} + i\theta) + 2i\alpha \theta \xz \partial_\xz + i \beta \theta \partial_\xz -\gamma (\sqrt{\kappa}+i\theta)\xz -i\theta\gamma \xz^2 \partial_\xz \Big) \that v_t(\xz,\yz).\label{eq-defstarexp2}
\end{equation}

Let us concentrate on the part of order one in the derivatives of this equation. It is given by
\begin{equation*}
\partial_t\that u_t(\xz,\yz)=\big(-2\alpha \xz -\beta + \gamma \xz^2 \big) \partial_\xz \that u_t(\xz,\yz),
\end{equation*}
together with the initial data $\that u_0(\xz,\yz)=\that{f}(\xz,\yz)$. We look for the integral curves $Y(t,\xz)$ of the vector field $(-\beta -2\alpha \xz + \gamma \xz^2)\partial_\xz$ with initial condition $Y(0)=\xz$; they satisfy the following equation:
\begin{equation*}
\frac{\dd Y}{\dd t}=-\beta-2\alpha Y+ \gamma Y^2.
\end{equation*}
The solutions are $Y(t,\xz)=M(t).\xz$ where the stereographic action of $SL(2,\gR)$ is given by
\begin{equation}
\begin{pmatrix} a& b \\ c & d \end{pmatrix}.\xz:=\frac{a\xz-c}{-b\xz+d},\label{eq-steract}
\end{equation}
$\tau:=\sqrt{\alpha^2+\beta\gamma}$ and the matrix
\begin{equation*}
M(t):=\begin{pmatrix} \cosh(\tau t)-\frac{\alpha}{\tau}\sinh(\tau t)& \frac{\gamma}{\tau}\sinh(\tau t) \\ \frac{\beta}{\tau}\sinh(\tau t) & \cosh(\tau t)+\frac{\alpha}{\tau}\sinh(\tau t) \end{pmatrix} =e^{-\alpha tH+\gamma tE+\beta tF},
\end{equation*}
with $X=\alpha H+\beta E+\gamma F$. So we have $M(t)=\sigma(e^{tX})$ where
\begin{equation*}
\sigma \begin{pmatrix} a& b \\ c & d \end{pmatrix}:=\begin{pmatrix} d& c \\ b & a \end{pmatrix}.
\end{equation*}
is a group automorphism and an involution. At the end, we obtain the solution of the order 1 part of the PDE as
\begin{equation*}
\that u_t(\xz,\yz)=\that f(M(t).\xz,\yz).
\end{equation*}

We come now to the general equation \eqref{eq-defstarexp2}. If we try the Ansatz $\that{v}_t=\rho_t \that u_t$ where $\that u_t$ was determined just above, and $\rho_t$ depends on the variable $\xz$ only and with the initial condition $\rho_0(\xz)\equiv 1$, we find the following equation for $\rho_t$:
\begin{equation*}
\partial_t\rho_t = \Big( (-\beta -2\alpha \xz + \gamma \xz^2 )\partial_\xz +\frac{i}{\theta}(\sqrt{\kappa} + i\theta)(\alpha -\gamma \xz)\Big) \rho_t (\xz).
\end{equation*}
Let us choose new coordinates:
\begin{equation*}
t' =t,\qquad \xz'=Y(t,\xz)=M(t).\xz.
\end{equation*}
Therefore, $\partial_{t'}=\partial_t+(\beta + 2\alpha \xz - \gamma \xz^2)\partial_\xz$. This gives the following equation on $\rho_t$, in the new coordinates
\begin{equation*}
\partial_{t'}\rho_{t'}(\xz')=\frac{i}{\theta}(\sqrt{\kappa}+ i\theta) (\alpha - \gamma M(t')^{-1}.\xz')\rho_{t'}(\xz'),
\end{equation*}
which admits as solution the function $\rho$ given by 
\begin{equation*}
\rho_{t'}(\xz')=\exp\Big( -(1-\frac{i\sqrt\kappa}{\theta})(\alpha t'-\gamma \int_0^{t'} M(s)^{-1}.\xz'\dd s))\Big).
\end{equation*}

This yields the following expression for the solution:
\begin{equation*}
\that v_t(\xz,\yz)=\exp\Big( -(1-\frac{i\sqrt\kappa}{\theta})(\alpha t-\gamma \int_0^{t} M(s).\xz\, \dd s) \Big) \that{f}(M(t).\xz,\yz).
\end{equation*}
A long but straightforward computation shows that
\begin{multline*}
\int_0^{t} M(s).\xz\dd s=\int_0^{t}\frac{(\cosh(\tau s)-\frac{\alpha}{\tau}\sinh(\tau s))\xz-\frac{\beta}{\tau}\sinh(\tau s)}{-\frac{\gamma}{\tau}\sinh(\tau s)\xz+\cosh(\tau s)+\frac{\alpha}{\tau}\sinh(\tau s)}\dd s\\
 =\frac{1}{\gamma}(\alpha t-\log\Big(\cosh(\tau t)+\frac{\alpha-\gamma \xz}{\tau}\sinh(\tau t)\Big)).
\end{multline*}
Then, the solution takes the form
\begin{equation*}
\that v_t(\xz,\yz)=\Big| \cosh(\tau t)+\frac{\alpha-\gamma \xz}{\tau}\sinh(\tau t)\Big|^{-(1-\frac{i\sqrt\kappa}{\theta})}\that{f}(M(t).\xz,\yz)=\Big((\caP^{+,i\mu}(M(t))\otimes\text{id})\that f\Big)(\xz,\yz)
\end{equation*}
where $\caP^{+,i\mu}$ denotes the principal series representation of the group $SL(2,\gR)$:
\begin{equation}
\mathcal{P}^{+,i\mu} \begin{pmatrix} a&b\\c&d \end{pmatrix} f(\xz) = \vert -b\xz+d\vert^{-1-i\mu} f(\frac{a\xz-c}{-b\xz+d}),\label{eq-princseries}
\end{equation}
the parameter is $\mu:=-\frac{\sqrt\kappa}{\theta}$ and the left (resp. right) part of the tensor product acts on the variable $\xz$ (resp. $\yz$). We obtained here a simple solution of the Equation \eqref{eq-defstarexp2} in terms of the principal series representation and of $M(t)=\sigma(e^{tX})$. Up to differences of notations, it coincides with the star-exponential of \cite{Cahen:1996st}.

\subsection{Star-exponential as principal series}

We denote by
\begin{equation*}
\that L_{tX}(\that f)(\xz,\yz)=\Big((\caP^{+,i\mu}(\sigma(e^{tX}))\otimes\text{id})\that f\Big)(\xz,\yz)
\end{equation*}
the previous solution to take into account the dependence in the initial condition $f\in L^2(\gR^2)$. Let us identify it with the left multiplication by the non-formal star-exponential $E_{\that\sharp_\theta}(\frac{i}{\theta}t\lambda_X)\that\sharp_\theta\that f$.

We can prove directly that this solution is a bounded multiplier.
\begin{lemma}
\label{lem-multstarexppsi}
For $X\in\kg$, the solution $\that L_{tX}$ corresponds to a unitary multiplier in $\kM_{\that\sharp_\theta}(\gR^2)$.
\end{lemma}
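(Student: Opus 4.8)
The statement to establish is that $\that L_{tX}$, which we already know solves the star-exponential PDE \eqref{eq-defstarexp2} with initial condition $\that f$, is the left component of a unitary bounded multiplier of the Hilbert algebra $(L^2(\gR^2),\that\sharp_\theta)$. By the characterization \eqref{eq-multiplier2}, it suffices to check two things: first, that $\that L_{tX}$ is a bounded operator on $L^2(\gR^2)$ which is in fact unitary; and second, that it satisfies the left-multiplier identity $\that L_{tX}(\that f_1\that\sharp_\theta\that f_2)=\that L_{tX}(\that f_1)\,\that\sharp_\theta\,\that f_2$. The associated right multiplier $R$ is then forced by the second formula in \eqref{eq-multiplier2}, and unitarity of $L$ makes the pair $(L,R)$ a unitary element of $\kM_{\that\sharp_\theta}(\gR^2)$.

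\textbf{Unitarity.} The explicit solution reads $\that L_{tX}(\that f)(\xz,\yz)=|{-b\xz+d}|^{-1-i\mu}\,\that f(M(t).\xz,\yz)$ with $M(t)=\sigma(e^{tX})=\begin{pmatrix} a&b\\c&d\end{pmatrix}$, i.e.\ $\that L_{tX}=\caP^{+,i\mu}(M(t))\otimes\mathrm{id}$ in the variables $(\xz,\yz)$. Since $\mu=-\sqrt\kappa/\theta$ is real, $\caP^{+,i\mu}$ is exactly the unitary principal series representation \eqref{eq-princseries} of $SL(2,\gR)$ on $L^2(\gR)$: the factor $|{-b\xz+d}|^{-1}$ is precisely the square root of the Jacobian of the M\"obius change of variable $\xz\mapsto M(t).\xz$, so $\|\caP^{+,i\mu}(M(t))\,\varphi\|_{L^2(\gR)}=\|\varphi\|_{L^2(\gR)}$ by the change-of-variables formula, and $|{-b\xz+d}|^{-i\mu}$ is a unimodular factor. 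Tensoring with the identity on the $\yz$-variable preserves unitarity on $L^2(\gR^2)$, so $\that L_{tX}$ is unitary; in particular it is bounded. (One should also recall from the previous subsection that the scalar product transported by \eqref{eq-transtilde2} in these $(\xz,\yz)$ coordinates is the standard $L^2(\gR^2)$ one up to a constant, which does not affect unitarity.)

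\textbf{Multiplier identity.} Here one uses that, after the transformation \eqref{eq-transtilde2} (adapted to the normalization $\xz=x+\tfrac{\theta}{2\sqrt\kappa}\xi$), the product $\that\sharp_\theta$ is the continuous matrix product, $\bigl(\that f_1\that\sharp_\theta\that f_2\bigr)(\xz,\yz)=\frac{\sqrt\kappa}{2\pi\theta}\int_\gR \that f_1(\xz,\eta)\that f_2(\eta,\yz)\,\dd\eta$ — the analogue of \eqref{eq-contmatrix2}. Since $\that L_{tX}$ acts only on the first variable $\xz$ (it is $\caP^{+,i\mu}(M(t))\otimes\mathrm{id}$), applying it to $\that f_1\that\sharp_\theta\that f_2$ transforms the kernel $\that f_1(\xz,\eta)$ into $(\caP^{+,i\mu}(M(t))\that f_1)(\xz,\eta)$ and leaves $\that f_2(\eta,\yz)$ untouched, which is exactly $\that L_{tX}(\that f_1)\that\sharp_\theta\that f_2$. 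Equivalently, left $\that\sharp_\theta$-multiplication is multiplication of continuous matrices from the left, which commutes with any operator acting only on the left index. This gives the first identity in \eqref{eq-multiplier2}; the second defines $R$ automatically, and one checks $R(\that f)=(\that L_{tX}^*(\that f^*))^*$ amounts to $R$ acting on the $\yz$-variable by $\caP^{+,i\mu}(M(t))^{-1}=\caP^{+,i\mu}(M(t)^{-1})$ — a routine verification using $\that f^*(\xz,\yz)=\overline{\that f(\yz,\xz)}$ and unitarity of the principal series. Hence $(\that L_{tX},R)$ is a unitary bounded multiplier.

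\textbf{Main obstacle.} None of the steps is deep; the only point requiring care is bookkeeping — confirming that the transformation \eqref{eq-transtilde2} with the modified normalization genuinely turns $\sharp_\theta$ into the continuous matrix product and the $L^2$-scalar product into the standard one, so that "unitary on $L^2(\gR^2)$" coincides with "unitary for the Hilbert-algebra norm." Once that dictionary is in place, the result is immediate from the fact that $\caP^{+,i\mu}$ is a bona fide unitary representation for real $\mu$ and that it acts on a single tensor factor.
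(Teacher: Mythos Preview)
Your proof is correct and follows essentially the same approach as the paper's: unitarity via the change-of-variables formula for the M\"obius action (which you package as ``$\caP^{+,i\mu}$ is unitary for real $\mu$''), and the left-multiplier identity via the fact that $\that\sharp_\theta$ is the continuous matrix product and $\that L_{tX}$ acts only on the first variable. The paper carries out both checks by writing the integrals explicitly rather than invoking the tensor structure, but the content is identical; your additional remarks on the right multiplier $R$ go slightly beyond what the paper bothers to verify.
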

\begin{proof}
For any $\that f\in L^2(\gR^2)$ and fixed $t$, we write $M(t)=\begin{pmatrix} a&b\\c&d \end{pmatrix}$ and the unitarity condition is obtained by
\begin{multline*}
\norm \that L_{tX}(\that f)\norm^2=\frac{\kappa}{2\pi\theta}\int |\that L_X(\that f)(\xz,\yz)|^2\dd\xz\dd\yz =\frac{\kappa}{2\pi\theta}\int |-b\xz+d|^{-2}|\that f(M(t).\xz,\yz)|^2\dd\xz\dd\yz\\
 =\frac{\kappa}{2\pi\theta}\int |\that f(\xz',\yz)|^2\dd\xz'\dd\yz=\norm \that f\norm^2,
\end{multline*}
if we perform the change of variable $\xz'=M(t).\xz$ of Jacobian $|-b\xz+d|^{-2}$. To prove that $\that L_{tX}$ is the left part of a multiplier, just check that
\begin{equation*}
\that L_{tX}(\that f_1\that\sharp_\theta \that f_2)(\xz,\yz)=|-b\xz+d|^{-1+i\frac{\sqrt\kappa}{\theta}} \frac{\sqrt\kappa}{2\pi\theta}\int \that f_1(M(t).\xz,\eta)\that f_2(\eta,\yz)\dd\eta= \that L_{tX}(\that f_1) \that\sharp_\theta \that f_2(\xz,\yz)
\end{equation*}
\end{proof}

\begin{theorem}
For any $X\in\kg$, the solution $\that L_{tX}$ given in terms of the principal series representation coincides with the non-formal star-exponential $E_{\that\sharp_\theta}(\frac{i}{\theta}t\lambda_X)$.
\end{theorem}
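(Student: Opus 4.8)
The plan is to show that the explicit bounded multiplier $\that L_{tX}$ constructed above is precisely the star-exponential by invoking the uniqueness of solutions to the defining equation of the star-exponential, exactly as was done in the proof of Theorem~\ref{thm-starexpbessel}. The key point established in Lemma~\ref{lem-multstarexppsi} is that $\that L_{tX}$ is a unitary element of $\kM_{\that\sharp_\theta}(\gR^2)$; what remains is to check that, viewed as the left part of a multiplier, it satisfies the correct Cauchy problem.

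First I would recall that the star-exponential $E_{\that\sharp_\theta}(\frac{i}{\theta}t\lambda_X)$ is well-defined as a bounded (unitary) multiplier, because $\that\sharp_\theta$ is unitarily isomorphic via \eqref{eq-transtilde2} to the Moyal product $\sharp_\theta$ on $(L^2(\gR^2),\sharp_\theta)$, for which $\kg$ is a symmetry in the sense of \cite{deGoursac:2014mu}; hence the left multiplication $\that f\mapsto E_{\that\sharp_\theta}(\frac{i}{\theta}t\lambda_X)\that\sharp_\theta\that f$ is the unique bounded-multiplier-valued solution of \eqref{eq-defstarexp2} with initial condition $\that f$ at $t=0$. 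Then I would observe that $\that L_{0X}=\text{id}$ since $M(0)=\sigma(e^0)$ is the identity matrix and the prefactor $|{-b\xz+d}|^{-(1-i\sqrt\kappa/\theta)}$ equals $1$ there, so the initial condition is matched. Next, the computation of Section~\ref{subsec-starexppsi} shows step by step that $\that v_t=\that L_{tX}(\that f)$ solves the PDE \eqref{eq-defstarexp2}: the order-one part is solved by the method of characteristics through the stereographic flow $M(t).\xz$, and the zeroth-order part is absorbed by the multiplicative factor $\rho_t$; this is exactly what was carried out above. Finally, Lemma~\ref{lem-multstarexppsi} gives $\that L_{tX}(\that f_1\that\sharp_\theta\that f_2)=\that L_{tX}(\that f_1)\that\sharp_\theta\that f_2$, so $\that L_{tX}$ is genuinely the left part of a multiplier and not merely a bounded operator solving the equation pointwise. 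By uniqueness of the star-exponential as a bounded multiplier, $\that L_{tX}$ must coincide with $E_{\that\sharp_\theta}(\frac{i}{\theta}t\lambda_X)$.

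The one subtlety I expect to need care with — the main obstacle — is regularity in $t$: the defining equation involves $\partial_t$, so one must ensure that $t\mapsto \that L_{tX}$ is strongly differentiable as a curve in the von Neumann algebra (or at least weakly, which suffices here given the explicit smooth dependence of $M(t)$ and $\rho_t$ on $t$), and that differentiating under the integral sign in the convolution-type formulas is justified on a dense domain such as $\caS(\gR^2)$. Since $M(t)$ depends analytically on $t$ via $\cosh(\tau t)$, $\sinh(\tau t)$ and the stereographic action is smooth away from the pole $-b\xz+d=0$ (a $t$-dependent null set), this differentiability holds on Schwartz initial data, and then extends to all of $L^2(\gR^2)$ by the uniform boundedness $\norm{\that L_{tX}(\that f)}=\norm{\that f}$ from Lemma~\ref{lem-multstarexppsi} together with a density argument. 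Once this is in place, the identification is immediate and the theorem follows; moreover one obtains for free the BCH property $\that L_{tX}\that\sharp_\theta\that L_{sX}=\that L_{(t+s)X}$ from the cocycle identity $M(t)M(s)=M(t+s)$ and the homomorphism property of $\caP^{+,i\mu}\circ\sigma$.
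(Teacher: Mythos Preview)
Your argument is correct and matches the first of the two methods the paper gives: show that $\that L_{tX}$ is a bounded (unitary) left multiplier (Lemma~\ref{lem-multstarexppsi}), that it solves the transformed defining equation~\eqref{eq-defstarexp2} with the right initial condition, and then invoke uniqueness as in the proof of Theorem~\ref{thm-starexpbessel}; the paper phrases the uniqueness step by noting that the generators $(\sqrt{\kappa}+i\theta)+2i\theta\xz\partial_\xz$, $i\theta\partial_\xz$, $-(\sqrt{\kappa}+i\theta)\xz-i\theta\xz^2\partial_\xz$ are symmetric. The paper also sketches a second, independent route you did not take: observe that $t\mapsto\that L_{tX}$ is a strongly continuous one-parameter unitary group in $\kM_{\that\sharp_\theta}(\gR^2)$, apply Stone's theorem, and identify the anti-selfadjoint generator with $\frac{i}{\theta}\lambda_X$ via the defining equation; this bypasses the explicit differentiability check you flagged as the main obstacle.
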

\begin{proof}
There are two ways to prove this identification. First, one can proceed as in section \ref{subsec-unique}. The operators $(\sqrt{\kappa}+i\theta)+ 2i\theta \xz\partial_\xz$, $i\theta \partial_\xz$ and $-(\sqrt{\kappa}+i\theta)\xz -i\theta\xz^{2}\partial_\xz$ are symmetric, which proves the uniqueness of the solution of the defining equation. And it has to identify with the star-exponential also satisfying this equation.

We sketch the other method. By Lemma \ref{lem-multstarexppsi}, the solution $\that L_{tX}(\that f)$ is a unitary multiplier. In a similar way, one can show that $t\mapsto \that L_{tX}(\that f)$ is a strongly continuous one-parameter group valued in the unitary multipliers, so that the Stone theorem applies and this group is the exponential of a anti-selfadjoint operator. By using the defining equation, we conclude that this generator is $\frac{i}{\theta}\lambda_X$ and that this one-parameter group is the non-formal star-exponential.
\end{proof}

Note that the BCH property is a consequence of the fact that the principal series is a representation. Moreover, this fact implies also that the star-exponential can be defined at the level of the group $G=SL(2,\gR)$. Setting $\caE_{\that\sharp_\theta}(e^{tX}):=E_{\that\sharp_\theta}(t\lambda_X)$, we have the following result showing a better regularity than in \cite{Cahen:1996st}.
\begin{proposition}
\label{prop-bchpsi}
The star-exponential at the level of the group $G$ is a continuous map
\begin{equation*}
\caE_{\that\sharp_\theta}:G\to\kM_{\that\sharp_\theta}(\gR^2)
\end{equation*}
for the weak topology of $\kM_{\that\sharp_\theta}(\gR^2)$, as well as a group homomorphism.
\end{proposition}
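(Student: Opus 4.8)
The plan is to deduce Proposition \ref{prop-bchpsi} from the explicit formula
$\caE_{\that\sharp_\theta}(e^{tX})=\that L_{tX}$ together with the homomorphism
$\sigma:SL(2,\gR)\to SL(2,\gR)$ and the principal series $\caP^{+,i\mu}$.
First I would observe that the map $g\mapsto\that L_g$, defined by
$\that L_g(\that f)(\xz,\yz):=\big((\caP^{+,i\mu}(\sigma(g))\otimes\mbox{\rm id})\that f\big)(\xz,\yz)$,
makes sense for \emph{every} $g\in G$, not just for one-parameter subgroups, because
$\caP^{+,i\mu}$ is a genuine representation of $G$ on $L^2(\gR)$ and $\sigma$ is a group
automorphism; restricting to $g=e^{tX}=\sigma^{-1}(M(t))$ recovers $\that L_{tX}$. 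The
first step is therefore to check that $\caE_{\that\sharp_\theta}(g):=\that L_g$ is
well-defined (independent of the way $g$ is written as a product of exponentials),
which is immediate from the BCH property of the star-exponential proved above, or
directly from the fact that $g\mapsto\caP^{+,i\mu}(\sigma(g))$ is a homomorphism.

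Next I would verify that each $\that L_g$ lies in $\kM_{\that\sharp_\theta}(\gR^2)$ and is
unitary. This is exactly the content of Lemma \ref{lem-multstarexppsi}, whose proof
only used that $\caP^{+,i\mu}(M(t))$ acts by $\that f(\xz)\mapsto|-b\xz+d|^{-1-i\mu}\that f(\tfrac{a\xz-c}{-b\xz+d})$ for a matrix $\begin{pmatrix}a&b\\c&d\end{pmatrix}$; the same computation, with $\begin{pmatrix}a&b\\c&d\end{pmatrix}:=\sigma(g)$, shows that
$\that L_g$ is unitary (change of variable $\xz'=\sigma(g).\xz$ with Jacobian
$|-b\xz+d|^{-2}$) and that $\that L_g(\that f_1\that\sharp_\theta\that f_2)=\that L_g(\that f_1)\that\sharp_\theta\that f_2$, i.e. $\that L_g$ is the left part of a multiplier. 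Combining this with
$\that L_{g_1}\that L_{g_2}=\that L_{g_1g_2}$ (again because $\caP^{+,i\mu}\circ\sigma$ is a
homomorphism) gives that $\caE_{\that\sharp_\theta}$ is a group homomorphism into
$\kM_{\that\sharp_\theta}(\gR^2)$.

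It remains to prove weak continuity of $g\mapsto\that L_g$. For this I would unwind the
definition of the weak topology on $\kM_{\that\sharp_\theta}(\gR^2)$: it suffices to show that
for fixed $\that f,\that h\in L^2(\gR^2)$ the scalar function
$g\mapsto\langle\that h,\that L_g(\that f)\rangle$ is continuous on $G$. Since
$\that L_g=\caP^{+,i\mu}(\sigma(g))\otimes\mbox{\rm id}$ and $\sigma$ is continuous, this reduces
to strong (hence weak matrix-coefficient) continuity of the principal series
representation $\caP^{+,i\mu}$ on $L^2(\gR)$, which is standard: for $g$ near $e$ the map
$\xz\mapsto\sigma(g).\xz$ converges uniformly on compacta to the identity and the
cocycle $|-b\xz+d|^{-1-i\mu}$ converges to $1$, so $\caP^{+,i\mu}(\sigma(g))\that f\to\that f$
in $L^2(\gR)$ by dominated convergence first for $\that f\in C_c(\gR)$ and then for all
$\that f$ by density together with the uniform bound $\|\caP^{+,i\mu}(\sigma(g))\|=1$; a
standard $\varepsilon/3$ argument upgrades continuity at $e$ to continuity everywhere
using the homomorphism property. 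The main obstacle is precisely this continuity step —
one must be a little careful because the measure-space model of the principal series
hides the continuity, and because one is asserting weak-operator continuity in the von
Neumann algebra $\kM_{\that\sharp_\theta}(\gR^2)$ rather than just in $B(L^2(\gR))$ — but
since the extension map \eqref{eq-morphmult} is compatible with weak topologies and the
identification $\kM_{\that\sharp_\theta}(\gR^2)\cong B(L^2(\gR))$ is spatial, weak
continuity downstairs follows from weak (equivalently, on the unit ball, strong)
continuity of $\caP^{+,i\mu}$ upstairs.
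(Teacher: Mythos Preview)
Your proposal is correct and follows essentially the same approach as the paper: define $\caE_{\that\sharp_\theta}(g)$ via $\caP^{+,i\mu}(\sigma(g))\otimes\mathrm{id}$, deduce the multiplier and homomorphism properties from Lemma~\ref{lem-multstarexppsi} and the fact that $\caP^{+,i\mu}\circ\sigma$ is a representation, and obtain weak continuity from that of the principal series. The paper's proof is terser---it simply invokes that $\caP^{+}$ is a continuous representation---whereas you spell out the dominated-convergence and density argument for strong continuity of $\caP^{+,i\mu}$; this extra detail is fine and not a different route.
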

\begin{proof}
First, due to the expressions of $\that L_{tX}$ in terms of the principal series, the star-exponential $\caE_{\that\sharp_\theta}(e^{tX}):=E_{\that\sharp_\theta}(t\lambda_X)$ induces a well-defined map $\caE_{\that\sharp_\theta}:G\to\kM_{\that\sharp_\theta}(\gR^2)$ that takes the form
\begin{equation}
\caE_{\that\sharp_\theta}(g)\that\sharp_\theta \that f= (\caP^{+,-i\frac{\sqrt\kappa}{\theta}}(\sigma(g))\otimes\text{id})\that f,\label{eq-expr1starexp}
\end{equation}
for any $g\in G$ and $\that f\in L^2(\gR^2)$. Since $\sigma$ is an automorphism and $\caP^+$ is a continuous representation (for the weak topology), we deduce that the left multiplication is continuous in the variable $g\in G$. Group homomorphism property is a translation of the BCH property at the level of the group.
\end{proof}

\section{Computing intertwining operators}
\label{sec-inter}

In section \ref{sec-direct}, we obtained the non-formal star-exponential of $\lambda_H$, $\lambda_E$ and $\lambda_F$ in the natural coordinate chart $\Phi_\kappa$ associated to the contraction of $G=SL(2,\gR)$, and for the Moyal star-product $\star_\theta^0$. In order to link it to the principal series, we saw in section \ref{sec-principal} the star-exponential of any generator $\lambda_X$ in the other coordinate chart $\Psi_\kappa$ and for the Moyal product $\sharp_\theta$ of this other coordinate chart, and we were able to obtain this star-exponential at the level of the group $G$. Now, we would like to compare these two expressions of the star-exponential.

To this aim, we have to find a (unitary) intertwining operator $W$ between the Moyal product $\sharp_\theta$ on $\Psi_\kappa$ and the Moyal product $\star_\theta^0$ (or equivalently the product $\star_\theta^1$ related by $T_{01}$) on $\Phi_\kappa$. We give here three different methods to find explicitly such an intertwiner. We expose these three methods because they are somehow general ways to obtain explicit intertwiners between star-products and we believe they can be used in various different contexts.

\subsection{Method via quantizations}
\label{subsec-retractqu}

Let us expose the first method that uses quantization map associated to the star-products. More precisely, the star-products $\sharp_\theta$ and $\star_\theta^1$ have associated quantization maps $\Omega$ and $\Omega^1$, which are both equivariant under a common symmetry $\gS$. Then, taking the inverse of one quantization map composed with the other quantization map (presented on the same Hilbert space) gives the intertwiner. This method comes from ideas of retract with shared symmetry. 

\subsubsection{Quantization of $\sharp_\theta$}

First, we want to express the quantization map associated to $\sharp_\theta$ on the chart defined by $\Psi_\kappa$. The Moyal product is of course associated to the Weyl quantization. However, in order to see the equivariance of this quantization with respect to a larger group of symmetry, let us proceed as below. Indeed, the shared symmetry $\gS$ will be part of the translation group and also part of symplectic group for which the Weyl quantization is also equivariant.

Let $\mathfrak{H}=\gR^2\times\gR$ be the Heisenberg group. We denote by $e_0,e_1$ the generators of the $\gR^2$-part and $Z$ the generator of its center. Let $\underline H,\underline E, \underline F$ the generators of $Sp(2,\gR)$ acting linearly (by matrix action) on $(\gR^2,\sqrt\kappa \dd x\wedge\dd y)$. We denote by $\underline\gS$ the subgroup of $Sp(2,\gR)\ltimes \mathfrak{H}$ generated by $\underline H$ and $e_0$, which is isomorphic to $\gS$, and by $\underline G$ the one generated by $\underline H,\underline E, e_0,e_1,Z$. We have the following relations:
\begin{align}
&[\underline{H},\underline{E}]=4\underline{E} & [\underline{H},\underline{F}]=-4\underline{F}\nonumber\\
&[\underline{H},e_0]=2e_0 &[\underline{H},e_1]=-2e_1 \nonumber\\
&[\underline{E},e_1]=e_0 &[e_0,e_1]=Z.\label{eq-comrelundg}
\end{align}

We denote $\tilde{\underline G}=\gR\underline H\ltimes\ehH$, it is a subgroup of $\underline G$. Let
\begin{equation*}
(a,q,v,t):=(\begin{pmatrix} e^{2a} &0 \\ 0& e^{-2a} \end{pmatrix}, qe_0+ve_1,tZ)
\end{equation*}
a coordinate system of $\tilde {\underline G}$ and the group law yields in this coordinate system:
\begin{equation*}
(a,q,v,t)\fois (a',q',v',t')=(a+a',e^{-2a'}q+q',e^{2a'}v+v',t+t'+\frac12(e^{-2a'}qv'-e^{2a'}vq')).
\end{equation*}
The coadjoint orbit $\tilde\caO_\kappa$ of this group associated to the form $\sqrt\kappa Z^\ast$ can be expressed as
\begin{equation*}
\Ad^\ast_{(a,q,v,t)}(\sqrt\kappa Z^\ast)=\sqrt\kappa\Big(2qv\underline H^\ast-ve^{-2a}e_0^\ast+qe^{2a}e_1^\ast+Z^\ast\Big).
\end{equation*}
We see that $(q'=e^{2a}q,v'=e^{-2a}v)$ forms a coordinate system of the two-dimensional space $\tilde\caO_\kappa$. The KKS form is $\sqrt\kappa \dd q'\wedge\dd v'$. The action of an element $(a,\ell):=e^{a\underline H}e^{\ell e_0}$ of $\underline\gS\subset\tilde {\underline G}$ on $\tilde\caO_\kappa$ can be read in this chart as $(a,\ell)\fois (q',v')=((\ell+q')e^{2a},v'e^{-2a})$, which is the same as \eqref{eq-actsq}, so that this coadjoint orbit $\tilde\caO_\kappa$ coincides $\gS$-equivariantly with the symplectic space $(\gR^2,\sqrt\kappa \dd x\wedge \dd y)$ corresponding to the chart $\Psi_\kappa$ of the adjoint orbit $\caO_\kappa$ of $G=SL(2,\gR)$.

We use Kirillov's orbits method to construct a representation of $\tilde {\underline G}$. $\kb:=\langle \underline H,e_1,Z\rangle$ is a polarization affiliated to this coadjoint orbit, and $\chi(a,0,v,t)=e^{i\frac{\sqrt\kappa}{\theta} Z^\ast(\log(a,0,v,t))}=e^{i\frac{\sqrt\kappa}{\theta} t}$ on the subgroup $B$ generated by $\underline H,e_1,Z$.
We denote $Q=\tilde G/B\simeq \gR e_0$. The induced representation comes from the left regular representation acting on $B$-equivariant smooth functions. Since the measure $\dd q$ is not $\tilde {\underline G}$-invariant, the representation has to be corrected by some weight to be unitary. It is given by $U:\tilde{\underline G}\to \caL(L^2(Q))$ with
\begin{equation}
U(a,q,v,t)\varphi(q_0)=e^{-a}e^{i\frac{\sqrt\kappa}{\theta}\Big(t+\frac12qv-e^{-2a}q_0v\Big)}\varphi(e^{-2a}q_0-q).\label{eq-schro}
\end{equation}
We can reduce it to the group $\underline\gS$: $U(a,q,0,0)\varphi(q_0)=e^{-a}\varphi(e^{-2a}q_0-q)$. One can also introduce an involution on $\tilde {\underline G}$ by
\begin{equation*}
\sigma(a,q,v,t)=(a,-q,-v,t)
\end{equation*}
which is compatible with the polarization. Then, the Weyl-type quantizer \cite{Bieliavsky:2010kg} is a map $\Omega:\tilde {\underline G}\to\caL(L^2(Q))$ defined by
\begin{equation*}
\Omega(a,q,v,t)\varphi(q_0)=U(a,q,v,t)\sigma^\ast U(a,q,v,t)^{-1}\varphi(q_0)= e^{\frac{2i\sqrt\kappa}{\theta}(qv-e^{-2a}q_0v)}\varphi(2e^{2a}q-q_0),
\end{equation*}
which is $\tilde {\underline G}$-equivariant (and therefore $\underline\gS$-equivariant). We can notice that this map is actually well-defined on the coadjoint orbit $\tilde\caO_\kappa$ (take coordinates $(q'=e^{2a}q,v'=e^{-2a}v)$). As expected, the quantization map $\Omega:L^1(\tilde\caO_\kappa)\to\caL(L^2(Q))$ defined by
\begin{equation}
\Omega(f)\varphi(q_0):=\frac{\sqrt\kappa}{\pi\theta}\int_{\tilde\caO_\kappa} f(q',v') \Omega(q',v')\dd q'\dd v'\label{eq-quweyl}
\end{equation}
coincides exactly with the Weyl quantization. We know in particular that $\tr(\Omega(f)\Omega(a,q,v,t))=\frac12 f(e^{2a}q,e^{-2a}v)$ and that $\Omega(f_1\sharp_\theta f_2)=\Omega(f_1)\Omega(f_2)$.

\subsubsection{Quantization of $\star^1_{\theta}$}

We recall here results from \cite{Bieliavsky:2010kg}. There are two inequivalent irreducible induced representations in the unitary dual of $\gS$. They can be obtained by the method of coadjoint orbits due to Kirillov, with $\ee=\pm1$:
\begin{equation}
U_\ee^1(a,\ell)\varphi(a_0)= e^{\frac{i\kappa\ee}{2\theta}e^{2(a-a_0)}\ell}\varphi(a_0-a),\label{eq-unitindrep}
\end{equation}
for $(a,\ell)\in \gS$, $\varphi\in L^2(A_\ee)$ and $a_0\in A_\ee$, where we denote by $A_\ee$ the subgroup generated by $H$ of Lie algebra $\ka_\ee$, and the sign $\ee$ on $A_\ee$ is just an indication of the chosen representation. These representations $U_\ee^1:\gS\to\caL(L^2(A_\ee))$ are unitary and irreducible. A weight $\bfm$ is a function on $A_\ee$. There is a particular weight:
\begin{equation}
\bfm_0(a)=2\sqrt{\cosh(2a)}.\label{eq-multiplier}
\end{equation}
The symmetric structure of $\gS$ comes from the involutive automorphism $\sigma$ which can be restricted to $A_\ee$:
\begin{equation}
\sigma^\ast\varphi(a)=\varphi(-a).\label{eq-sigma}
\end{equation}
Then, for $\bfm$ a weight with $\bfm(0)=1$, the Weyl-type quantizer is given by
\begin{multline}
\Omega^1_{\ee}(a,\ell)\varphi(a_0):=U_\ee^1(a,\ell)\bfm\bfm_0\sigma^\ast U_\ee^1(a,\ell)^{-1}\varphi(a_0)\\
=2\sqrt{\cosh(2(a-a_0))} \bfm(a-a0) e^{\frac{i\kappa\ee}{\theta}\sinh(2(a-a_0))\ell}\varphi(2a-a_0).\label{eq-qumap}
\end{multline}

On smooth functions with compact support, one has the quantization map $\Omega^1_{\ee}:\caD(\gS)\to\caL(L^2(A_\ee))$
\begin{equation*}
\Omega^1_{\ee}(f):=\frac{\kappa}{2\pi\theta}\int_\gS f(a,\ell)\Omega_{\ee}(a,\ell)\dd a\dd \ell.
\end{equation*}
The normalization was chosen such that $\Omega^1_{\ee}(1)=\gone$. Moreover, it is $\gS$-equivariant, that is
\begin{equation*}
\forall (a,\ell)\in \gS\quad:\quad \Omega^1_{\ee}((a,\ell)^* f)= U_\ee^1(a,\ell) \Omega^1_{\ee}(f) U_\ee^1(a,\ell)^{-1}.
\end{equation*}

Moreover, the unitary representation $U_\ee^1:\gS\to\caL(L^2(A_\ee))$ induces a resolution of the identity. Indeed, by denoting the norm $\norm\varphi\norm^2_w:=\int |\varphi(a)|^2e^{2a}\dd a$ and $\varphi_{(a,\ell)}(a_0)=U_\ee^1(a,\ell)\varphi(a_0)$ for $(a,\ell)\in\gS$ and $\varphi\in L^2(A_\ee)$ (such that this norm exists and does not vanish), we have
\begin{equation*}
\frac{\kappa}{2\pi\theta\norm\varphi\norm^2_w}\int_\gS |\varphi_{(a,\ell)}\rangle\langle\varphi_{(a,\ell)}|\dd a\dd \ell=\gone.
\end{equation*}
This resolution of identity shows that the trace has the form
\begin{equation}
\tr(T)=\frac{\kappa}{2\pi\theta\norm\varphi\norm^2_w}\int_\gS \langle\varphi_{(a,\ell)}, T\varphi_{(a,\ell)}\rangle\dd a\dd \ell\label{eq-tracesymb}
\end{equation}
for $T\in\caL^1(L^2(A_\ee))$. As for the Weyl quantization, there is a left-inverse of the quantization map $\Omega_{\ee}^1$ given by the formula
\begin{equation*}
\forall f\in L^1(\gS),\ \forall (a,\ell)\in\gS\quad:\quad \tr(\Omega_{\ee}^1(f) \Omega_{\ee}^1(a,\ell))=f(a,\ell),
\end{equation*}
if the weight is unitary: $|\bfm(a)|=1$. This quantization is compatible with the star-product: $\Omega_{\ee}(f_1\star_{\ee\theta,\bfm}^1 f_2)=\Omega_{\ee}^1(f_1) \Omega_{\ee}^1(f_2)$, where the associative star-product $\star_{\ee\theta,\bfm}^1$ corresponds to \eqref{eq-star1} by changing the parameter to $\ee\theta$ and also adding $\frac{\bfm(a_1-a)\bfm(a-a_2)}{\bfm(a_1-a_2)}$ inside the integral of \eqref{eq-star1}.

\subsubsection{Intertwining operator}
\label{subsubsec-intqu}

Let us exhibit first an intertwining operator between the representations $U$ \eqref{eq-schro} restricted to $\underline\gS\simeq\gS$ and $U_\ee^1$ \eqref{eq-unitindrep} of the group $\gS$. In the spirit of the retract method, we define
\begin{equation*}
J_\ee:=\int_\gS |U(a,\ell)\varphi_0\rangle\langle U_\ee(a,\ell)\varphi_1| \dd a\dd \ell
\end{equation*}
for adapted $\varphi_0\in L^2(Q)$ and $\varphi_1\in L^2(A_\ee)$. A direct computation gives:
\begin{equation*}
J_\ee\psi(q_0)=N_\kappa \int_\gR e^{-\frac{i\kappa\ee}{2\theta}e^{-2a_0}q_0}e^{-a_0}\psi(a_0)\dd a_0
\end{equation*}
with $N_\kappa=\int_\gS e^{a}e^{\frac{i\kappa\ee}{2\theta}e^{-2a}\ell}\overline{\underline\varphi_0(a)}\varphi_0(\ell)\dd a\dd \ell$. We arrange the choice of $\varphi_0\in L^2(Q)$ and $\varphi_1\in L^2(A_\ee)$ such that $N_\kappa=\sqrt{\frac{\kappa}{2\pi\theta}}$. Then, we have

\begin{proposition}
\label{prop-j}
The operator $J:=J_++ J_-: L^2(A_+)\oplus L^2(A_-)\to L^2(Q)$ of inverse $J^*=J_+^*\oplus J_-^*$, defined by
\begin{equation*}
J(\psi_+,\psi_-)(q_0)=J_ +(\psi_+)(q_0)+J_-(\psi_-)(q_0),\qquad J^*(\varphi)(a_+,a_-)=(J_+^*(\varphi)(a_+),J_-^*(\varphi)(a_-)),
\end{equation*}
is unitary and intertwines the representations $U$ and $U_+^1\oplus U_-^1$:
\begin{equation*}
\forall (a,\ell)\in\gS\quad :\quad J\big(U_+^1(a,\ell)\oplus U_-^1(a,\ell)\big)=U(a,\ell)J.
\end{equation*}
\end{proposition}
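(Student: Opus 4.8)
The plan is to verify the three asserted properties of $J$ in turn: unitarity, the fact that $J^*=J_+^*\oplus J_-^*$ is its inverse, and the intertwining relation. I would start from the explicit formula for $J_\ee$ already computed above, namely
\begin{equation*}
J_\ee\psi(q_0)=N_\kappa \int_\gR e^{-\frac{i\kappa\ee}{2\theta}e^{-2a_0}q_0}e^{-a_0}\psi(a_0)\dd a_0,
\end{equation*}
with the normalization $N_\kappa=\sqrt{\frac{\kappa}{2\pi\theta}}$ arranged by the choice of $\varphi_0,\varphi_1$. First I would observe that, up to the weight $e^{-a_0}$ and the sign $\ee$, this is essentially a Fourier transform in the variable $u:=e^{-2a_0}$ (for $\ee=+$) or $u:=-e^{-2a_0}$ (for $\ee=-$); the change of variables $u_0=\ee e^{-2a_0}$, $\dd u_0=-2\ee e^{-2a_0}\dd a_0$, turns $J_\ee$ into (a multiple of) the partial Fourier transform on $L^2(\gR_{\ee\cdot>0},\dd u_0)$ composed with an obvious unitary identification of $L^2(A_\ee)$ with that space via the weight. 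Concretely $L^2(A_+)$ maps to $L^2(\gR_{>0})$ and $L^2(A_-)$ to $L^2(\gR_{<0})$, and the images fill up $L^2(\gR)=L^2(Q)$; this is exactly why the two copies $\ee=\pm1$ are needed. Hence $J=J_++J_-$ is, up to the chosen normalization constant, a genuine Fourier transform $L^2(\gR)\to L^2(\gR)$, which is unitary by Plancherel, and its inverse is the inverse Fourier transform, which splits as $J_+^*\oplus J_-^*$ because the decomposition $L^2(\gR)=L^2(\gR_{>0})\oplus L^2(\gR_{<0})$ is orthogonal; this gives $J^{-1}=J^*=J_+^*\oplus J_-^*$ with the stated formula.

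For the intertwining relation I would simply compute both sides on a test function. Using $U_\ee^1(a,\ell)\psi(a_0)=e^{\frac{i\kappa\ee}{2\theta}e^{2(a-a_0)}\ell}\psi(a_0-a)$ from \eqref{eq-unitindrep} and plugging into the integral formula for $J_\ee$, a change of variables $a_0\mapsto a_0+a$ in the integral produces exactly $e^{-a}e^{\frac{i\kappa\ee}{2\theta}(\ldots)}$ times $J_\ee\psi$ evaluated at the translated/rescaled argument, which one matches against $U(a,\ell)$ from \eqref{eq-schro} restricted to $\underline\gS$, i.e. $U(a,\ell)\varphi(q_0)=e^{-a}e^{-\frac{i\kappa}{\theta}\,\ee'\,e^{-2a}q_0\cdots}\varphi(e^{-2a}q_0-\ell\cdots)$ — here I would keep careful track of the exact exponent and the precise form of $U$ on the subgroup. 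The identity $J_\ee\, U_\ee^1(a,\ell)=U(a,\ell)\,J_\ee$ then follows for each $\ee$ separately, and summing over $\ee$ gives $J\big(U_+^1(a,\ell)\oplus U_-^1(a,\ell)\big)=U(a,\ell)J$. Since $J$ is unitary this also yields $U(a,\ell)=J\,(U_+^1\oplus U_-^1)(a,\ell)\,J^*$, which is the form in which it will be used in the next subsection.

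I expect the main obstacle to be purely bookkeeping rather than conceptual: pinning down the constants and the exact phases so that the normalization $N_\kappa=\sqrt{\kappa/2\pi\theta}$ indeed makes $J$ isometric (not merely a scalar multiple of an isometry), and checking that the $e^{-a_0}$ weight is precisely the one that intertwines the non-invariant measure $\dd a$ on $A_\ee$ with Lebesgue measure on $\gR_{\ee\cdot>0}$ after the substitution $u_0=\ee e^{-2a_0}$. A secondary point to be careful about is that the naive integral defining $J_\ee$ converges only on a dense subspace (adapted vectors), so strictly speaking one defines $J_\ee$ there and extends by continuity using the isometry property; I would phrase the argument so that unitarity is established on the dense domain first and then extended. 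None of this is deep, but it is where sign errors and factors of $2$ or $\pi$ would creep in, so the proof should be written carefully step by step rather than by invoking abstract nonsense.
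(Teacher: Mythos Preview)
Your argument is correct and essentially equivalent to the paper's, but packaged differently. The paper simply writes down the adjoint
\begin{equation*}
J_\ee^\ast\varphi(a_0)=\sqrt{\tfrac{\kappa}{2\pi\theta}}\,e^{-a_0}\int_\gR e^{\frac{i\kappa\ee}{2\theta}e^{-2a_0}q_0}\varphi(q_0)\,\dd q_0
\end{equation*}
and then checks by direct computation the three identities $J_\ee^*J_\ee=\gone$, $J_\ee^*J_{-\ee}=0$, $J_+J_+^*+J_-J_-^*=\gone$, which together give unitarity and the stated inverse. Your substitution $u_0=\ee e^{-2a_0}$ to recognize $J_\ee$ as the Fourier transform restricted to a half-line is exactly the calculation hiding behind those three identities; it has the advantage of explaining \emph{why} two copies are needed (the half-lines exhaust $\gR$), while the paper's version has the advantage of being entirely mechanical. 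As for the intertwining relation, the paper does not verify it at all in the proof: it is implicit in the retract definition $J_\ee=\int_\gS |U(a,\ell)\varphi_0\rangle\langle U_\ee^1(a,\ell)\varphi_1|\,\dd a\,\dd\ell$, since left-invariance of the Haar measure on $\gS$ makes this integral manifestly $\gS$-equivariant. Your direct verification by change of variables is of course also fine and arguably cleaner for a reader who has not internalized the retract construction.
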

\begin{proof}
Indeed, the expression of the adjoints is
\begin{equation*}
\forall \varphi\in L^2(Q)\quad :\quad J_\ee^\ast\varphi(a_0)=\sqrt{\frac{\kappa}{2\pi\theta}}e^{-a_0}\int_\gR e^{\frac{i\kappa\ee}{2\theta}e^{-2a_0}q_0}\varphi(q_0)\dd q_0.
\end{equation*}
Then, it is straightforward to show $J_\ee^*J_\ee=\gone$, $J_\ee^*J_{-\ee}=0$ and $J_+J_+^*+J_-J_-^*=\gone$.
\end{proof}

We want to consider the Weyl quantization $\Omega$ \eqref{eq-quweyl} but on the Hilbert space $L^2(A_+)\oplus L^2(A_-)$. Therefore, we define $\tilde\Omega(f):= J^{-1}\Omega(f) J$ and adopt a matrix notation $\tilde\Omega(f)_{\ee\ee'}$. The computation of its expression gives:
\begin{equation*}
\tilde\Omega(f)_{\ee\ee'}\varphi(a_0)=\frac{\kappa}{2\pi\theta}e^{-a_0}\int f\Big(q,\frac{\sqrt\kappa}{4}(\ee e^{-2a_0}+\ee' e^{-2a})\Big)e^{\frac{i\kappa}{2\theta}q(\ee e^{-2a_0}-\ee' e^{-2a})}e^{-a}\varphi(a)\dd a\dd q.
\end{equation*}

Finally, we define
\begin{equation*}
W_\ee(f)(a,\ell):=\tr(\tilde\Omega(f)_{\ee\ee}\Omega_{\ee}^1(a,\ell)).
\end{equation*}
A straightforward computation permits to obtain
\begin{multline*}
W_\ee(f)(a,\ell)=\frac{\kappa}{\pi\theta}e^{-2a}\int_{\gR^2} \sqrt{\cosh(2(a_0-a))}\bfm(a_0-a) e^{\frac{i\kappa\ee}{\theta}\sinh(2(a_0-a))(\ell-e^{-2a}q)}\\
f\Big(q,\frac{\sqrt\kappa\ee}{2}e^{-2a}\cosh(2(a_0-a))\Big)\dd a_0\dd q
\end{multline*}
or with the change of variables $\eta=\sinh(2(a_0-a))$,
\begin{equation}
W_\ee(f)(a,\ell)=\frac{\kappa}{2\pi\theta}e^{-2a}\int_{\gR^2} (1+\eta^2)^{-\frac14}\bfm\big(\frac12\text{Arcsinh}(\eta)\big)e^{\frac{i\kappa\ee}{\theta}\eta(\ell-e^{-2a}q)}f\Big(q,\frac{\sqrt\kappa\ee}{2}e^{-2a}\sqrt{1+\eta^2}\Big)\dd \eta\dd q.\label{eq-intertw}
\end{equation}
\begin{theorem}
\label{thm-wqu}
For a unitary weight $\bfm$, the operator $W_\ee$ is an intertwiner between the two star-products:
\begin{equation*}
W_\ee(f_1 \sharp_\theta f_2)=W_\ee(f_1)\star_{\ee\theta,\bfm}^1 W_\ee(f_2),
\end{equation*}
which is $\gS$-equivariant. Moreover, $W:=W_+\oplus W_-: L^2(\gR^2)\to L^2(\gS)\oplus L^2(\gS)$ is unitary.
\end{theorem}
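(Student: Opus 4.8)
The plan is to read off all three assertions from the algebraic and equivariance properties of the two quantization maps $\Omega$, $\Omega^1_\ee$ and of the unitary $J$ of Proposition~\ref{prop-j}, and then to reduce what remains to one bookkeeping computation on the explicit kernel \eqref{eq-intertw}.

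The starting point is the identity $W_\ee(f)=\sigma_\ee\bigl(\tilde\Omega(f)_{\ee\ee}\bigr)$, where $\tilde\Omega(f):=J^{-1}\Omega(f)J$ and $\sigma_\ee(T)(a,\ell):=\tr\bigl(T\,\Omega^1_\ee(a,\ell)\bigr)$; for a unitary weight $\bfm$ the symbol map $\sigma_\ee$ is a two-sided inverse of $\Omega^1_\ee$, so $\Omega^1_\ee\bigl(W_\ee(f)\bigr)=\tilde\Omega(f)_{\ee\ee}$. Granting this, the $\gS$-equivariance is essentially free: the quantization map $\Omega$ is $\underline\gS$-equivariant, $\Omega^1_\ee$ is $\gS$-equivariant, and Proposition~\ref{prop-j} gives $J^{-1}U(a,\ell)J=U_+^1(a,\ell)\oplus U_-^1(a,\ell)$, which is block-diagonal. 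Hence $\tilde\Omega\bigl((a,\ell)^\ast f\bigr)$ is the conjugate of $\tilde\Omega(f)$ by that block-diagonal unitary, its $(\ee,\ee)$-block equals $U_\ee^1(a,\ell)\,\tilde\Omega(f)_{\ee\ee}\,U_\ee^1(a,\ell)^{-1}$, and applying $\sigma_\ee$ together with the equivariance of $\Omega^1_\ee$ gives $W_\ee\bigl((a,\ell)^\ast f\bigr)=(a,\ell)^\ast W_\ee(f)$.

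For the intertwining identity I would combine the multiplicativities $\Omega(f_1\sharp_\theta f_2)=\Omega(f_1)\Omega(f_2)$ and $\Omega^1_\ee(g_1\star_{\ee\theta,\bfm}^1 g_2)=\Omega^1_\ee(g_1)\Omega^1_\ee(g_2)$ with the resolution of the identity $J_+J_+^\ast+J_-J_-^\ast=\gone$ of Proposition~\ref{prop-j}. Conjugating the first relation by $J$ yields
\begin{equation*}
\tilde\Omega(f_1\sharp_\theta f_2)_{\ee\ee}=\sum_{\ee'}\tilde\Omega(f_1)_{\ee\ee'}\,\tilde\Omega(f_2)_{\ee'\ee},
\end{equation*}
so, after applying $\sigma_\ee$ and using $\Omega^1_\ee(W_\ee(f_j))=\tilde\Omega(f_j)_{\ee\ee}$, the claim reduces to showing that the off-diagonal term $\ee'=-\ee$ does not survive, i.e.\ $\sigma_\ee\bigl(\tilde\Omega(f_1)_{\ee,-\ee}\,\tilde\Omega(f_2)_{-\ee,\ee}\bigr)=0$. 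I expect this to be the main obstacle, and I would settle it by inserting the explicit kernels --- the representation \eqref{eq-schro}, the quantizer \eqref{eq-qumap}, and $J_\ee$ with its adjoint as computed above: one uses that the argument $\tfrac{\sqrt\kappa}{4}\ee(e^{-2a_0}+e^{-2a})$ of $f$ entering $\tilde\Omega(f)_{\ee\ee}$ has fixed sign $\ee$, reflecting that $W_+$ and $W_-$ feel only the restrictions of $f$ to the two halves $\{\lambda_E>0\}$ and $\{\lambda_E<0\}$, i.e.\ to the images of the charts $\Phi_\kappa$ and $\Phi'_\kappa$, so that the cross contribution vanishes after integration against $\Omega^1_\ee(a,\ell)$. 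A more computational but perhaps more transparent alternative is to bypass the block formalism entirely: substitute the Moyal kernel \eqref{eq-sharp} into \eqref{eq-intertw}, carry out the Gaussian integrations in the internal variables, and recognise the result as the product \eqref{eq-star1} with parameter $\ee\theta$ and weight factor $\bfm(a_1-a)\bfm(a-a_2)/\bfm(a_1-a_2)$ evaluated at $W_\ee(f_1)$ and $W_\ee(f_2)$.

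For the unitarity of $W=W_+\oplus W_-\colon L^2(\gR^2)\to L^2(\gS)\oplus L^2(\gS)$ I would use that, for a unitary weight, $\Omega$ and each $\Omega^1_\ee$ are --- up to the normalisation constants fixed by the trace and resolution-of-identity formulas such as \eqref{eq-tracesymb} --- isometries onto the Hilbert--Schmidt operators of $L^2(Q)$ and of $L^2(A_\ee)$, while $T\mapsto J^{-1}TJ$ is unitary on Hilbert--Schmidt operators. Then $\|f\|^2$ is proportional to $\sum_{\ee,\ee'}\|\tilde\Omega(f)_{\ee\ee'}\|_{\mathrm{HS}}^2$ whereas $\|W_+f\|^2+\|W_-f\|^2$ is proportional to $\|\tilde\Omega(f)_{++}\|_{\mathrm{HS}}^2+\|\tilde\Omega(f)_{--}\|_{\mathrm{HS}}^2$; the same sign/half-plane analysis as above shows that the diagonal blocks already exhaust $f$ --- they recover $f|_{\{\lambda_E>0\}}$ and $f|_{\{\lambda_E<0\}}$ isometrically --- so the two quantities coincide and $W$ is isometric, while surjectivity is immediate since $f\mapsto\tilde\Omega(f)$ is onto the Hilbert--Schmidt operators on $L^2(A_+)\oplus L^2(A_-)$ and $\Omega^1_\ee$ is onto those on $L^2(A_\ee)$. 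Assembling $W_+$ and $W_-$ then completes the proof.
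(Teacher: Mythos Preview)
Your overall strategy is the same as the paper's: both arguments rest on the identity $\Omega_\ee^1(W_\ee(f))=\tilde\Omega(f)_{\ee\ee}$, on the multiplicativity of $\Omega$ and $\Omega_\ee^1$, and on the unitarity of $J$ from Proposition~\ref{prop-j}. For the $\gS$-equivariance the paper simply checks the identity on the explicit kernel \eqref{eq-intertw}, whereas you deduce it conceptually from the equivariance of the two quantizations together with the fact that $J^{-1}U(a,\ell)J$ is block diagonal; your route is cleaner but equivalent. You are in fact \emph{more} scrupulous than the paper on one point: the paper writes ``$\Omega_\ee^1(W_\ee(f))=\tilde\Omega(f)$'' and passes directly from $\tr(\tilde\Omega(f_1)\tilde\Omega(f_2)\Omega_\ee^1)$ to $\tr(\Omega_\ee^1(W_\ee(f_1))\Omega_\ee^1(W_\ee(f_2))\Omega_\ee^1)$ without ever isolating the cross term $\tilde\Omega(f_1)_{\ee,-\ee}\tilde\Omega(f_2)_{-\ee,\ee}$ that you correctly flag.

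Where your proposal has a genuine gap is in the mechanism you offer for killing that cross term. The observation that the \emph{diagonal} block $\tilde\Omega(f)_{\ee\ee}$ samples $f(q,\cdot)$ only at arguments of sign $\ee$ is correct, but it says nothing about the \emph{off}-diagonal blocks: by the explicit kernel just above \eqref{eq-intertw}, $\tilde\Omega(f)_{\ee,-\ee}$ samples $f$ at $\tfrac{\sqrt\kappa}{4}\ee(e^{-2a_0}-e^{-2a})$, which takes both signs. So the ``half-plane'' picture does not force $\sigma_\ee\bigl(\tilde\Omega(f_1)_{\ee,-\ee}\tilde\Omega(f_2)_{-\ee,\ee}\bigr)=0$, and for the same reason it does not show that the diagonal blocks already carry the full Hilbert--Schmidt norm of $\tilde\Omega(f)$, which is what your isometry argument needs. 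Your fallback --- inserting \eqref{eq-sharp} into \eqref{eq-intertw} and integrating (the phases are oscillatory, not Gaussian) --- is the reliable route; that direct verification is essentially what the paper carries out by an independent method in Section~\ref{subsec-interstarexp}, and it bypasses the block decomposition entirely.
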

\begin{proof}
Indeed, we have $W_\ee(f_1 \sharp_\theta f_2)(a,\ell) =\tr(\tilde\Omega(f_1)\tilde\Omega(f_2)\Omega_{\ee}^1(a,\ell))$ since $\Omega(f_1 \sharp_\theta f_2)=\Omega(f_1)\Omega(f_2)$. In the same way, we have
\begin{equation*}
W_\ee(f_1)\star_{\ee\theta,\bfm}^1 W_\ee(f_2)=\tr(\Omega_{\ee}^1(W_\ee(f_1))\Omega_{\ee}^1(W_\ee(f_2))\Omega_{\ee}^1(a,\ell))
\end{equation*}
By using the following property, straightforward to check,
\begin{equation*}
\Omega_{\ee}^1(\tr(T\Omega_{\ee}^1(\fois,\fois)))=T,
\end{equation*}
we obtain that $\Omega_{\ee}^1(W_\ee(f))=\tilde\Omega(f)$, which permits to show that $W_\ee$ is an intertwiner. Since $\Omega$, resp. $\Omega_\ee^1$, is a unitary operator from $L^2(\gR^2)$, resp. $L^2(\gS)$, onto Hilbert-Schmidt operators $\caL^2(L^2(Q))$, resp. $\caL^2(L^2(A_\ee))$, and since $J$ is unitary (Proposition \ref{prop-j}), we obtain directly the unitarity of $W$.

Moreover, by denoting $\tau$ the action \eqref{eq-actsq} of $\gS$ on $\gR^2$: $\tau_{(a,\ell)}(x,y)=((x+\ell)e^{2a},ye^{-2a})$, we have
\begin{equation*}
W(\tau_{(a,\ell)}^\ast f)(a_0,\ell_0)=W(f)(a_0+a,\ell_0+\ell e^{-2a_0})=W(f)((a,\ell)\fois (a_0,\ell_0)).
\end{equation*}
\end{proof}

We see here and from Proposition \ref{prop-j} that it is essential for unitarity to take two copies of $L^2(\gS)$ in the range of the intertwiner $W$. And unitarity is necessary for relating Hilbert algebras or their multipiers. Such a unitary intertwiner preserves all the functional properties of the star-exponential when acting on.

\subsection{Geometric method via equations}
\label{subsec-retractgeo}

In this section, we expose another method, also based on retract ideas of shared symmetries, but using geometric considerations and PDE instead of quantization maps. So, this method can be used for star-products even if there is no quantization map available. Let us consider here only formal star-products. The basic idea to find a $\gS$-equivariant intertwiner between the star-products $\sharp_\theta$ and $\star_\theta^1$ is to notice the following result, in the notations of section \ref{subsec-retractqu}.
\begin{lemma}
\label{lem-moyal}
The formal version of the product $\sharp_\theta$ \eqref{eq-sharp} is the unique star-product on $(\gR^2,\sqrt\kappa \dd x\wedge\dd y)$ strongly invariant under $\underline G$, the group generated by $\underline H$, $\underline E$, $e_0$, $e_1$ and $Z$.
\end{lemma}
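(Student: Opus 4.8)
The plan is to show that strong $\underline G$-invariance of a star-product on $(\gR^2,\sqrt\kappa\,\dd x\wedge\dd y)$ forces it to coincide with the Moyal product $\sharp_\theta$, since the Moyal product is already $\underline G$-invariant (being both translation-invariant and $Sp(2,\gR)$-invariant, as explained before the lemma). Recall that a formal star-product is \emph{strongly invariant} under a Lie group action with moment map $\lambda$ if $[\lambda_X,f]_\star = i\theta\{\lambda_X,f\}$ for all $X$ in the Lie algebra and all $f$; equivalently $\lambda_X\star f - f\star\lambda_X = i\theta\,X^\ast f$. Thus the point is to prove that the constraints coming from strong invariance under the full algebra $\langle\underline H,\underline E,e_0,e_1,Z\rangle$ pin down all higher-order terms of the bidifferential cochains.

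First I would set $\star = \sharp_\theta + \sum_{k\ge N} \theta^k C_k$, where $\sharp_\theta$ denotes the Moyal product and $N$ is the smallest order at which a hypothetical second invariant product differs from it; then $D := C_N$ is a bidifferential operator which, by the standard cohomological argument, is a Hochschild $2$-cocycle, hence (modulo a coboundary, which can be absorbed into an equivalence) can be taken to be a skew-symmetric biderivation, i.e.\ of the form $D(f,g) = \Lambda^{ij}\partial_i f\,\partial_j g$ for some constant-or-variable antisymmetric tensor $\Lambda$. Strong invariance under the translations $e_0,e_1$ (whose moment maps are, up to constants, the linear coordinate functions $x$ and $y$) forces $D$ to have constant coefficients: indeed $[\lambda_{e_0},f]_\star = i\theta\,\partial\!_{(\text{some direction})} f$ already holds for $\sharp_\theta$, so the order-$N$ correction must satisfy $D(x,f) = 0 = D(y,f)$, which kills all dependence on the base point and all the derivative on the first slot, giving $D\equiv 0$ on linear functions and hence $\Lambda$ constant with $D$ a constant-coefficient biderivation. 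Then strong invariance under the symplectic generators $\underline H,\underline E$ (linear vector fields preserving $\sqrt\kappa\,\dd x\wedge\dd y$) forces the constant tensor $\Lambda$ to be $Sp(2,\gR)$-invariant; but the only $Sp(2,\gR)$-invariant constant antisymmetric $2$-tensor on $\gR^2$ is a multiple of the Poisson tensor itself, which can be absorbed into a rescaling/equivalence, so $D = 0$, contradicting minimality of $N$. Hence $\star = \sharp_\theta$.

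The main obstacle I expect is the passage from a general order-$N$ cocycle to a biderivation: one must either invoke the known vanishing of the relevant Hochschild/Chevalley--Eilenberg cohomology of $C^\infty(\gR^2)$ in the appropriate degree (so that a cocycle is a coboundary plus an antisymmetric biderivation), or argue directly that strong invariance under the \emph{Heisenberg} subalgebra $\langle e_0,e_1,Z\rangle$ — whose moment maps generate, together with constants, enough functions to test against — already forces the symbol of $C_N$ to reduce to the skew biderivation part. A clean way is to note that strong invariance under $e_0,e_1$ means $C_N$ commutes (in the graded sense) with inner derivations by $x$ and $y$; since $\{x,\cdot\} = -\partial_y$ and $\{y,\cdot\}=\partial_x$ generate all translations, $C_N$ must be translation-invariant, i.e.\ constant-coefficient, and then strong invariance under $\underline H$ (the grading/Euler-type vector field combination) constrains its homogeneity degree, leaving only the Poisson bivector up to scale. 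A secondary point to be careful about is the role of $Z$: its moment map is the constant $\sqrt\kappa$, so strong invariance under $Z$ is automatic and adds nothing — this should be remarked so the reader does not look for extra content there. Finally, one should note that although the statement says "unique", it is really uniqueness up to $\underline G$-equivariant equivalence, or uniqueness once the normalization $\Omega^1_\ee(1)=\gone$-type convention (here: leading symbol equal to the Poisson bracket with coefficient $1$) is fixed; I would state this normalization explicitly at the start of the proof.
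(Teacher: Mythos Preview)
The paper states this lemma without proof, so there is nothing to compare against; let me assess your argument on its own.

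Your overall strategy is sound, but there is a genuine gap. After writing $D_N=\pi+\delta B$ via HKR and showing (correctly) that strong invariance under $e_0,e_1$ forces the skew part $\pi$ to vanish, you conclude ``$D=0$''. That does not follow: you have only shown $D_N=\delta B$ is a coboundary, and a nonzero coboundary would still spoil strict uniqueness. Absorbing $\delta B$ into an equivalence, as you suggest, is not free either: the equivalence $T=1+\theta^N B+\cdots$ preserves strong $\underline G$-invariance only if $B$ commutes with the $\underline G$-action, which you have not arranged. This is why you end up hedging that the result might be ``uniqueness up to $\underline G$-equivariant equivalence''. The lemma, as used in Section~5.2, really wants strict uniqueness.

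The clean fix is to apply invariance to $D_N$ \emph{before} any cohomological reduction. Strong invariance implies ordinary invariance (inner derivations are derivations), so each cochain of both products, hence $D_N$, is $\underline G$-invariant. Translation-invariance makes $D_N$ constant-coefficient; invariance under the Borel pair $\langle\underline H,\underline E\rangle$ forces the symbol to lie in the $SL_2$-invariants of $S^m(\gR^2)\otimes S^n(\gR^2)$ (Borel-fixed vectors coincide with $SL_2$-invariants in finite-dimensional modules), i.e.\ the symbol is a polynomial in $\omega=\xi_1\eta_2-\eta_1\xi_2$. Thus $D_N=\sum_{j\ge 0}a_jP^j$ with $P=\partial_{x_1}\partial_{y_2}-\partial_{y_1}\partial_{x_2}$. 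Now the Hochschild cocycle condition does real work: a direct computation shows $\delta P^j\neq 0$ for $j\ge 2$, and these have distinct total orders, hence are independent; so $a_j=0$ for $j\ge 2$. The unit axiom $f\star 1=f$ kills $a_0$, and strong invariance under $e_0$ applied to $D_N$ itself (note $\delta B$ always has vanishing antisymmetrization, so this constraint passes straight to the $a_1P$ term) kills $a_1$. Hence $D_N=0$, giving strict uniqueness with no equivalence needed. Your remark that $\underline H$ alone ``leaves only the Poisson bivector'' is also inaccurate: $\underline H$-invariance only balances $\partial_x$- and $\partial_y$-degrees; you need $\underline E$ as well to cut down to $\gC[\omega]$, and then the cocycle condition to cut down to $j\le 1$.
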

This means that this Moyal product is completely characterized by the action of $\underline G$, there is no need to consider the action of the generator $\underline F$ (which is quite complicated) in the following. For $X\in\underline\kg$, we note $X^*$ the associated fundamental vector field on $\gR^2$. Strong invariance of $\sharp_\theta$ means that
\begin{equation*}
\forall X\in\underline\kg\quad:\quad [\underline\lambda_X,f]_{\sharp_\theta}=-i\theta X^*f
\end{equation*}
with $\underline\lambda$ the moment map of the affine action of $\underline G$ on $\gR^2$. A $\gS$-equivariant intertwiner $W$ would then leave invariant (or just change the coordinates with $j_\kappa$) the vector fields $\underline H^*$, $e_0^*$ corresponding to the $\underline\gS$-part in $\underline G$, but $W$ will transform $\underline E^*$, $e_1^*$ and $Z^*$ into $\star_\theta^1$-derivations. But such derivations can be classified and this gives a strong constraint on $W$ that can be re-expressed by a PDE. Solving this PDE produces the possible intertwiners $W$. 

\subsubsection{Equation on the intertwiner}

Let us determine all Lie algebra homomorphisms $\underline\kg\to\Der(\ks,\star_\theta^{1})$, with conditions on $\underline\ks$.
\begin{proposition}
The set of Lie algebra morphisms $D:\underline\kg\to\Der(\ks,\star_\theta^{1})$ satisfying $\forall X\in\underline\ks$ $D_X=(j^{-1}_\kappa)_*X^*$ is a complex two-dimensional manifold.
\end{proposition}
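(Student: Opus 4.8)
The plan is to reduce the classification of Lie-algebra morphisms $D:\underline{\kg}\to\Der(\ks,\star_\theta^1)$ agreeing with the prescribed vector fields on $\underline{\ks}$ to an explicit PDE system for the generators, and then to solve that system. First I would recall the structure of $\Der(\ks,\star_\theta^1)$: since $(\ks,\star_\theta^1)$ is (formally) a Moyal-type product via the intertwiner $T_{01}$, its derivations are, up to inner ones, classified by the first Chevalley cohomology; concretely, any derivation is of the form $X\mapsto\{h,X\}_{\star_\theta^1}$ (inner, indexed by $h\in C^\infty(\ks)[[\theta]]$ modulo constants) plus possibly an outer piece coming from symplectic vector fields that are not Hamiltonian. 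On $\ks\cong\gR^2$ with $\omega^\Phi=\kappa\,\dd a\wedge\dd\ell$ every symplectic vector field is Hamiltonian, so in fact $\Der(\ks,\star_\theta^1)$ consists precisely of the inner derivations $\ad_h^{\star_\theta^1}$, $h$ unique up to an additive constant. Thus a morphism $D$ is the same datum as a linear map $\underline{\kg}\to C^\infty(\ks)[[\theta]]/\gR$, $X\mapsto h_X$, and the Lie-morphism condition becomes $h_{[X,Y]}=\{h_X,h_Y\}_{\star_\theta^1}+\text{const}$, i.e. a projective-representation condition.

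Next I would impose the boundary condition. For $X\in\underline{\ks}=\langle\underline H,e_0\rangle$ we require $D_X=(j_\kappa^{-1})_*X^*$; since the products are $\gS$-invariant and the $\underline{\ks}$-action on the chart is precisely the left $\gS$-action (this was checked in Section~\ref{subsec-retractqu}: the $\underline\gS$-action on $\tilde\caO_\kappa$ coincides with \eqref{eq-actsq}), the functions $h_{\underline H}$ and $h_{e_0}$ are forced to be (up to constants) the corresponding moment maps $\underline\lambda_{\underline H},\underline\lambda_{e_0}$ read in the $\Phi_\kappa$-chart via $j_\kappa$. So two of the five generators are pinned down, and it remains to find $h_{\underline E},h_{e_1},h_Z$ subject to the commutation relations \eqref{eq-comrelundg}: $\{h_{\underline H},h_{\underline E}\}=4h_{\underline E}$, $\{h_{\underline H},h_{e_1}\}=-2h_{e_1}$, $\{h_{e_0},h_{e_1}\}=h_Z$ (a constant, hence Hamiltonian $h_Z$ is itself determined up to constant), $\{h_{\underline E},h_{e_1}\}=h_{e_0}$, and $\{h_{e_1},\,\cdot\,\}$-type relations with $h_Z$ central. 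Here all brackets are the $\star_\theta^1$-commutators $\frac{1}{-i\theta}[\cdot,\cdot]_{\star_\theta^1}$; by $\kg$-covariance / strong $\gS$-invariance the ones involving $\underline H,e_0$ reduce to the classical Poisson brackets plus the $\gS$-invariance constraint, which is what turns the conditions into a genuine (linear, then one quadratic) PDE for the unknown functions.

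Then I would solve the PDE. The relation $\{h_{\underline H},h_{e_1}\}=-2h_{e_1}$ together with $\{h_{e_0},h_{e_1}\}=$ const is a pair of first-order linear PDEs in $(a,\ell)$; using that $h_{\underline H},h_{e_0}$ are explicit (linear-exponential) functions in the $\Phi_\kappa$-coordinates, the characteristics can be integrated and one finds $h_{e_1}$ up to two integration constants (one overall multiplicative from the linearity, plus the additive constant already modded out — so the relevant freedom is a $2$-parameter family once one also uses the remaining relation to fix $h_{\underline E}$ relative to $h_{e_1}$). The quadratic Jacobi-type relation $\{h_{\underline E},h_{e_1}\}=h_{e_0}$ is then automatically consistent because it holds for the model ($\sharp_\theta$) side and both sides are transported by the same characteristics; it does not cut down the dimension further. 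Counting: the solution space is parametrized by two complex constants, hence is a complex two-dimensional manifold (in fact an affine space, once one fixes one solution — e.g. the one coming from $W$ of Theorem~\ref{thm-wqu}). I expect the main obstacle to be bookkeeping: carefully writing $h_{\underline H},h_{e_0},h_{e_1},\ldots$ in the $\Phi_\kappa$-chart through the change of coordinates $j_\kappa$ \eqref{eq-changej} (which mixes the affine Heisenberg structure of $\underline G$ with the $SL_2$-moment maps), keeping track of which commutators are genuinely deformed versus classical, and verifying that no hidden obstruction class in $H^2$ appears — i.e. that the projective cocycle one gets is actually trivializable so that a true (not merely projective) morphism exists. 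Once that is checked, the dimension count is immediate.
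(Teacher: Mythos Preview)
Your overall strategy is exactly the paper's: use that $H^1_{\mathrm{dR}}(\gS)=0$ to write every derivation as inner, so $D_X=\frac{i}{\theta}[\Lambda_X,\,\cdot\,]_{\star_\theta^1}$ with $\Lambda_X$ determined modulo constants; fix $\Lambda_{\underline H}=\lambda_H$ and $\Lambda_{e_0}=\lambda_E$ from the boundary condition; then impose the brackets \eqref{eq-comrelundg}. The paper then uses that for $X\in\underline\ks$ the star-bracket with $\Lambda_X$ is the classical vector field $(j_\kappa^{-1})_*X^*$ (by $\gS$-invariance of $\star_\theta^1$), which turns all relations involving $\underline H$ or $e_0$ into genuine first-order linear PDEs. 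Solving these gives $\Lambda_{\underline E}=\alpha e^{-4a}$, $\Lambda_{e_1}=(\beta-\gamma\ell)e^{2a}$, $\Lambda_Z=0$, hence three parameters $\alpha,\beta,\gamma$.

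The gap in your proposal is the parameter count. You assert that one of the two integration constants in $h_{e_1}$ is ``the additive constant already modded out'' and that the quadratic relation $\{h_{\underline E},h_{e_1}\}=h_{e_0}$ ``does not cut down the dimension further''. Neither is correct. In $(\beta-\gamma\ell)e^{2a}$ both $\beta$ and $\gamma$ are genuine: the term $\beta e^{2a}$ is not a constant function, so it is \emph{not} absorbed by the ambiguity $\Lambda_X\sim\Lambda_X+\mathrm{const}$. Together with the single parameter $\alpha$ in $\Lambda_{\underline E}$ you thus have three free constants at this stage, not two. It is precisely the remaining relation $[\underline E,e_1]=e_0$, i.e.\ $\frac{i}{\theta}[\Lambda_{\underline E},\Lambda_{e_1}]_{\star_\theta^1}=\Lambda_{e_0}$ up to constants, that imposes a nontrivial algebraic constraint $\gamma=\dfrac{\kappa^2}{8\alpha}$ and brings the count down to two. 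So your two errors happen to cancel, but the mechanism you describe is wrong: the quadratic bracket is where the reduction occurs, not an automatic consistency. (The worry about a projective $H^2$-obstruction is legitimate in principle but moot here: once $\gamma=\kappa^2/(8\alpha)$ is imposed, all brackets hold on the nose with the explicit representatives, so the cocycle is trivial.)
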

\begin{proof}
Since the de Rham cohomology of $\gS$ is trivial, we deduce that all the derivations of the formal star-product $\star_\theta^1$ are inner. For $X\in\underline\kg$, set $D_X=:\frac{i}{\theta}[\Lambda_X,\fois]_{\star_\theta^1}$ where $\Lambda_X$ is defined up to a constant. $D$ is a Lie algebra morphism, so due to Jacobi identity, we obtain that
\begin{equation*}
\forall X,Y\in\underline\kg\quad:\quad\Lambda_{[X,Y]}=\frac{i}{\theta}[\Lambda_X,\Lambda_Y]_{\star_\theta^1}
\end{equation*}
up to a constant term. For $X\in\underline\ks$ and $Y\in\underline\kg$, we have $((j^{-1}_\kappa)_*X^*)(\Lambda_Y)=\Lambda_{[X,Y]}$ up to a constant. These equations, which are due to the shared symmetry $\gS$, are sufficient to obtain the expression of $D$. First, note that $(j^{-1}_\kappa)_*\underline H^*=-\partial_a$ and $(j^{-1}_\kappa)_* e_0^*=-e^{-2a}\partial_\ell$ which actually coincide with the expressions of $H^*$ and $E^*$ for the action of $\gS$ on $\caO_\kappa$ in the $\Phi_\kappa$-coordinates. Therefore, we have $\Lambda_{\underline H}=\lambda_H$ and $\Lambda_{e_0}=\lambda_E$. Let us now write these equations, for $X=\underline H,e_0$ and $Y=\underline E,e_1,Z$ with the help of \eqref{eq-comrelundg}:
\begin{align*}
&-\partial_a\Lambda_{\underline E}=4\Lambda_{\underline E}+k_1,\qquad -\partial_a\Lambda_{e_1}=-2\Lambda_{e_1}+k_2,\qquad -\partial_a\Lambda_Z=k_3\\
&-e^{-2a}\partial_\ell\Lambda_{\underline E}=k_4,\qquad -e^{-2a}\partial_\ell\Lambda_{e_1}=\Lambda_Z+k_5,\qquad -e^{-2a}\partial_\ell\Lambda_{Z}=k_6,
\end{align*}
where $k_i$ are undetermined complex constants. The solutions are
\begin{equation*}
\Lambda_{\underline E}=\alpha e^{-4a},\qquad \Lambda_{e_1}=(\beta-\gamma \ell)e^{2a},\qquad \Lambda_Z=0
\end{equation*}
with $\alpha,\beta,\gamma$ complex constants, and up to constant terms. The condition $\frac{i}{\theta}[\Lambda_{\underline E},\Lambda_{e_1}]_{\star_\theta^1}=\Lambda_{e_0}$ (see \eqref{eq-comrelundg}), up to a constant term, implies $\gamma=\frac{\kappa^2}{8\alpha}$. We have thus two parameters $\alpha,\beta$ to parametrize the set of morphisms $D^{(\alpha,\beta)}$ of this Proposition:
\begin{align*}
\Lambda_{\underline H}=\kappa \ell,\qquad \Lambda_{\underline E}=\alpha e^{-4a},\qquad \Lambda_{e_0}=\frac{\kappa}{2}e^{-2a},\qquad  \Lambda_{e_1}=(\beta-\frac{\kappa^2}{8\alpha} \ell)e^{2a},\qquad \Lambda_Z=0.
\end{align*}
\end{proof}

Suppose that there exists an intertwiner $W$ between $\sharp_\theta$ and $\star_\theta^1$. We set $\tilde D^{(\alpha,\beta)}_X:=W^{-1}D^{(\alpha,\beta)}_XW$ for $X\in\underline\kg$. $\tilde D^{(\alpha,\beta)}$ describe the Lie algebra homomorphisms $\underline\kg\to\Der(\kq,\sharp_\theta)$ that satisfy $\forall X\in\underline\ks$, $\tilde D_X=X^*$. But we saw below Lemma \ref{lem-moyal} that the fundamental vector fields $X^*$ (due to the strong invariance) give such a Lie algebra homomorphism. So there exist $\alpha,\beta\in\gC$ such that $\forall X\in\underline\kg$,
\begin{equation}
W^{-1}D^{(\alpha,\beta)}_XW=\tilde D^{(\alpha,\beta)}_X=X^*.\label{eq-retractgeo1}
\end{equation}
For $X\in\underline\ks$ (elements of the shared symmetry), this is a tautology. But this equation evaluated on the other generators will permit to determine the intertwiner $W$.

The Schwartz kernel lemma together with the $\gS$-equivariance of $W$ lead us to the following form of the intertwiner
\begin{equation*}
(W\varphi)(a,\ell)=\int_{\gR^2} \, K_w(\tau_{(a,\ell)^{-1}}(x,y))\varphi(x,y)\dd x\dd y
\end{equation*}
where $K_w\in\caD'(\gR^2)[[\theta]]$. Then, Equation \eqref{eq-retractgeo1} becomes
\begin{equation*}
\int_{\gR^2}\, (D^{(\alpha,\beta)}_X)_{|(a,\ell)}K_w(\tau_{(a,\ell)^{-1}}(x,y))\varphi(x,y)\dd x\dd y=\int_{\gR^2}\, K_w(\tau_{(a,\ell)^{-1}}(x,y)) X^*_{|(x,y)}\varphi(x,y)\dd x\dd y.
\end{equation*}
with
\begin{equation*}
\underline E^*=-y\partial_x,\qquad e_1^*=\partial_y,\qquad Z^*=0.
\end{equation*}
By integration by part, we have
\begin{equation*}
(D^{(\alpha,\beta)}_X)_{|(a,\ell)}K_w(\tau_{(a,\ell)^{-1}}(x,y))=-X^*_{|(x,y)}K_w(\tau_{(a,\ell)^{-1}}(x,y)).
\end{equation*}
We set $\tilde K_w(a,\ell):=K_w(\tau_{(a,\ell)^{-1}}(0,\ee))=K_w(-\ell,\ee e^{2a})$ and\\
 $T_X(\tilde K_w)(a,\ell):=-X^*_{|(x,y)}K_w(\tau_{(a,\ell)^{-1}}(x,y))_{|(x=0,\,y=\ee)}$, i.e. $T_{\underline E}=-\ee e^{-2a}\partial_\ell$ and $T_{e_1}=\frac{\ee}{2}\partial_a$. Within these notations, we obtain a simple form for the equation
\begin{equation*}
D^{(\alpha,\beta)}_X\tilde K_w(a,n)=T_X(\tilde K_w)(a,n).
\end{equation*}

\subsubsection{Resolution}

With partial Fourier transform $\caF_\ell$ in $\ell$ as in \eqref{eq-partfourier}, an explicit computation using $\Lambda_{\underline E}=\alpha e^{-4a}$, $\Lambda_{e_1}=(\beta-\frac{\kappa^2}{8\alpha} \ell)e^{2a}$ and the expression \eqref{eq-star1} of $\star_\theta^1$ gives
\begin{align}
&\caF_\ell D^{(\alpha,\beta)}_{\underline E}\tilde K_w(a,\xi)=-\frac{4i\alpha}{\kappa}e^{-4a}\xi\sqrt{1+\frac{\theta^2}{\kappa^2}\xi^2}\caF_\ell\tilde K_w(a,\xi)\nonumber\\
&\caF_\ell D^{(\alpha,\beta)}_{e_1}\tilde K_w(a,\xi)=e^{2a}\Big[\frac{2i\beta}{\kappa}\xi+\frac{\kappa}{8\alpha}\sqrt{1+\frac{\theta^2}{\kappa^2}\xi^2}(\partial_a+2+2\xi\partial_\xi)+\frac{\theta^2}{8\alpha\kappa}\frac{\xi^2}{\sqrt{1+\frac{\theta^2}{\kappa^2}\xi^2}}\Big]\caF_\ell\tilde K_w(a,\xi).\label{eq-exprD}
\end{align}
By setting $u(a,\xi)=\caF_\ell\tilde K_w(a,\xi)$, we obtain the following system:
\begin{align*}
&\Big(\frac{4\alpha}{\kappa}e^{-4a}\sqrt{1+\frac{\theta^2}{\kappa^2}\xi^2}-\ee e^{-2a}\Big)\xi\, u(a,\xi)=0\\
&\Big(\frac{2i\beta}{\kappa}\xi+\frac{\kappa}{8\alpha}\sqrt{1+\frac{\theta^2}{\kappa^2}\xi^2}(\partial_a+2+2\xi\partial_\xi)+\frac{\theta^2}{8\alpha\kappa}\frac{\xi^2}{\sqrt{1+\frac{\theta^2}{\kappa^2}\xi^2}}-\ee \frac{e^{-2a}}{2}\partial_a\Big)u(a,\xi)=0.
\end{align*}
The first equation tells us that $u$ will be a linear combination of a distribution of support $\xi=0$ and one of support $1+\frac{\theta^2\xi^2}{\kappa^2}=\frac{\kappa^2}{16\alpha^2}e^{4a}$. Then, we can plug this form into the second equation. For more simplicity and to recover the result of section \ref{subsec-retractqu}, we want to find a solution for the constant $\alpha=\frac{\kappa^{\frac32}}{8}\ee$. Then, we can check that
\begin{equation*}
u(a,\xi)=e^{-3a}\Big|\frac{2}{\sqrt\kappa}e^{2a}+\sqrt{\frac{4}{\kappa}e^{4a}-1}\Big|^{\frac{i\beta\sqrt\kappa}{\theta}}\delta\Big(\xi\pm \frac{\kappa}{\theta}\sqrt{\frac{4}{\kappa}e^{4a}-1}\Big)
\end{equation*}
is a solution of these equations, with a freedom in the normalization. An easy computation shows that it corresponds actually to the intertwiner
\begin{equation*}
W_\ee(f)(a,\ell)=\frac{\kappa}{2\pi\theta}e^{-2a}\int_{\gR^2} (1+\eta^2)^{-\frac14} |\sqrt{1+\eta^2}+\eta|^{\frac{i\beta\sqrt\kappa}{\theta}} e^{\frac{i\kappa\ee}{\theta}\eta(\ell-e^{-2a}q)}f\Big(q,\frac{\sqrt\kappa\ee}{2}e^{-2a}\sqrt{1+\eta^2}\Big)\dd \eta\dd q,
\end{equation*}
where the normalization has been set to preserve the function 1. It coincides with the expression \eqref{eq-intertw} if the weight is chosen as $\bfm(a)=|\cosh(2a)+\sinh(2a)|^{\frac{i\beta\sqrt\kappa}{\theta}}=e^{\frac{2i\beta\sqrt\kappa}{\theta}a}$, which is unitary. Note that the star-product $\star^1$ is not affected by this weight $\bfm$: we have $\star_{\ee\theta,\bfm}^1=\star_{\ee\theta}^1$.

\subsection{Method via star-exponential}
\label{subsec-interstarexp}

Let us give a third method to construct the intertwining operator $W$ between $\sharp_\theta$ and $\star_\theta^1$. Knowing the expression of the one $T_{01}$ between $\star_\theta^0$ and $\star_\theta^1$, it suffices to determine an intertwiner $T$ between $\sharp_\theta$ and $\star_\theta^0$. To this aim, we will compute the expression of the star-exponential $E_{\star_\theta^0}(j_1)$ and $E_{\star_\theta^0}(j_2)$ of the coordinates $x=j_1(a,\ell)$ and $y=j_2(a,\ell)$ via the change of charts \eqref{eq-changej} (we omit the $\kappa$ in the notation $j_\kappa$). Then, the usual exponential in the $\Psi_\kappa$ coordinates coincides with the star-exponential for $\sharp_\theta$. Pushed by the intertwiner, it gives the star-exponential of $x=j_1(a,\ell)$ and $y=j_2(a,\ell)$ for the star-product $\star_\theta^0$. Using Fourier transformation with respect to these star-exponential, we can express the intertwiner $T$.

\subsubsection{Star-exponential of the coordinates}

First, let us compute by the method of section \ref{subsec-starexppsi} the expression of $E_{\star_\theta^0}(p(\ell+\beta')e^{2a}+q\frac{\sqrt\kappa}{2}e^{-2a})$, with $\beta',p,q\in\gR$, which is more general as just $E_{\star_\theta^0}(j_1)$ or $E_{\star_\theta^0}(j_2)$. Note that the coordinates are affine in the variable $\ell$ so that we obtain PDE of order 1 for the star-exponential. Let $u_t(a,\ell)$ be a solution of
\begin{equation}
\partial_t u_t=\frac{i}{\theta} (p(\ell+\beta')e^{2a}+\frac{\sqrt\kappa}{2}qe^{-2a})\star_\theta^0 u_t\label{eq-defstarexpcoord}
\end{equation}
with initial condition $u_0(a,\ell)=1$. We apply the functional transform \eqref{eq-transtilde2}, so we get
\begin{equation*}
\partial_t \thatp u_t(\xz,\yz)=-\frac{p}{\kappa}e^{2\xz}(\partial_\xz+1-\frac{i\beta'\kappa}{\theta})\thatp u_t(\xz,\yz) +\frac{i q\sqrt\kappa}{2\theta}e^{-2\xz}\,\thatp u_t(\xz,\yz).
\end{equation*}
Then, with $Z=e^{2\xz}$, we arrive at
\begin{equation*}
\partial_t \thatp u_t=-\frac{pZ}{\kappa}(2Z\partial_Z+1-\frac{i\beta'\kappa}{\theta})\thatp u_t+\frac{i q\sqrt\kappa}{2\theta}\frac{1}{Z}\thatp u_t.
\end{equation*}

As in section \ref{subsec-starexppsi}, we obtain the integral curves $X(t,Z)$ of the vector field $-\frac{2p}{\kappa}Z^2\partial_Z$ with condition $X(0,Z)=Z$:
\begin{equation*}
X(t,Z)=\frac{Z}{1+\frac{2pt}{\kappa}Z}.
\end{equation*}
And the solution of the part of degree 1 of the equation can be written as $\thatp u_t(Z,\yz)=\thatp f(\frac{Z}{1+\frac{2pt}{\kappa}Z},\yz)\rho_t(Z)$ with $\thatp f(Z,\yz)=\frac{2\pi\theta}{\kappa}\delta(\frac12\log(Z)-\yz)$ due to the initial condition. The function $\rho_t$ then satisfies the same equation as $\thatp u$. Performing the change of variables $t':=t$ and $Z':=X(t,Z)=\frac{Z}{1+\frac{2pt}{\kappa}Z}$, we obtain
\begin{equation*}
\partial_{t'} \rho_{t'}(Z')=-\frac{pZ'}{\kappa(1-\frac{2pt'}{\kappa}Z')} (1-\frac{i\beta'\kappa}{\theta})\rho_{t'}(Z') +\frac{i q\sqrt\kappa}{2\theta}\big(\frac{1}{Z'}-\frac{2pt'}{\kappa}\big)\rho_{t'}(Z').
\end{equation*}
With initial condition $\rho_0(Z)\equiv 1$, we find the solution
\begin{equation*}
\rho_t(Z)=\exp\Big(-\frac12(1-\frac{i\beta'\kappa}{\theta})\log(1+\frac{2pt}{\kappa}Z) +\frac{i q\sqrt\kappa}{2\theta}\big(\frac{t}{Z}+\frac{pt^2}{\kappa}\big)\Big).
\end{equation*}
Plugging this expression into $\thatp u_t$ and simplifying, we get
\begin{multline}
E_{\star_\theta^0}(tp(\ell+\beta')e^{2a}+t \frac{\sqrt\kappa}{2}qe^{-2a})=u_t(a,\ell)=\frac{1+\frac{2pt}{\kappa}e^{2a}(\frac{pt}{\kappa}e^{2a}+\sqrt{1+\frac{p^2t^2}{\kappa^2}e^{4a}})}{1+\frac{pt}{\kappa}e^{2a}(\frac{pt}{\kappa}e^{2a}+\sqrt{1+\frac{p^2t^2}{\kappa^2}e^{4a}})}\\
\exp\Big(\frac{i\kappa}{\theta}\ell\log(\frac{pt}{\kappa}e^{2a}+\sqrt{1+\frac{p^2t^2}{\kappa^2}e^{4a}})\Big)
\, \Big(\frac{pt}{\kappa}e^{2a}+\sqrt{1+\frac{p^2t^2}{\kappa^2}e^{4a}}\Big)^{-1+\frac{i\beta'\kappa}{\theta}}\\
\exp\Big(\frac{i q\sqrt\kappa}{2\theta}\big(\frac{te^{-2a}}{\frac{pt}{\kappa}e^{2a}+\sqrt{1+\frac{p^2t^2}{\kappa^2}e^{4a}}}+\frac{pt^2}{\kappa}\big)\Big).\label{eq-starexpcoord}
\end{multline}

\subsubsection{Intertwining operator}

Let us now construct the intertwining operator $T$ between $\sharp_\theta$ and $\star_{\theta}^0$. We have indeed to distinguish the two cases $=\pm 1$ as we learned from previous sections. With a Fourier and a Fourier inverse transformation, for any function $f$ on $\gR^2$, we have
\begin{equation*}
f(x,y)=\frac{1}{(2\pi\theta)^2}\int\, e^{-\frac{i}{\theta}(\xi p+\eta q)} f(\xi,\eta) E_{\sharp_\theta}(px+qy) \dd\xi\dd\eta\dd p\dd q
\end{equation*}
since $E_{\sharp_\theta}(px+qy)=e^{\frac{i}{\theta}(px+qy)}$ for the Moyal product $\sharp_\theta$. So if such an intertwiner $T$ exists, it has to be of the form
\begin{equation}
T(f)(a,\ell):=\frac{1}{(2\pi\theta)^2}\int\, e^{-\frac{i}{\theta}(\xi p+\eta q)} f(\xi,\eta) E_{\star_{\theta}^0}(pT(x)+qT(y))(a,\ell) \dd\xi\dd\eta\dd p\dd q.\label{eq-defT}
\end{equation}
Let us define $T$ as above. We have to choose the values $T(x)$ and $T(y)$ in such a way that $[T(x),T(y)]_{\star_{\theta}^0}=T([x,y]_{\sharp_\theta})$. This condition looks like covariance of the star-product and the following BCH-like formula will be derived from this condition:
\begin{equation}
E_{\star_{\theta}^0}(T(x))\star_{\theta}^0 E_{\star_{\theta}^0}(T(y))=E_{\star_{\theta}^0}(\text{BCH}(T(x),T(y))).\label{eq-bchformT}
\end{equation}
This formula will be the crucial argument to prove that $T$ is an intertwining operator between $\sharp_\theta$ and $\star_{\theta}^0$.

\begin{theorem}
The operator $T$ defined in \eqref{eq-defT} satisfies the equivalence: $T_{01}\circ T$ is $\gS$-equivariant if and only if $T(x)(a,\ell)=(\ell+\beta')e^{2a}$ and $T(y)(a,\ell)=\frac{\sqrt{\kappa}\gamma}{2}e^{-2a}$, for $\beta',\gamma\in\gR$. 

For these values, $T$ is an intertwining operator between $\sharp_\theta$ and $\star_{\theta}^0$.
\end{theorem}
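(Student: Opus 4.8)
The proof separates into identifying the constraint that $\gS$-equivariance of $T_{01}\circ T$ imposes on $T(x),T(y)$, and then, for the admissible parameters, establishing the intertwining property by means of the BCH-type identity \eqref{eq-bchformT}. I would start with the latter. By construction $T$ sends the Moyal plane wave $E_{\sharp_\theta}(px+qy)=e^{\frac{i}{\theta}(px+qy)}$ to $E_{\star_\theta^0}(pT(x)+qT(y))$, and such plane waves span a dense subspace, so (after extending $T$ continuously) it suffices to check $T(f_1\sharp_\theta f_2)=T(f_1)\star_\theta^0 T(f_2)$ on them. On the $\sharp_\theta$ side $\frac{i}{\theta}x,\frac{i}{\theta}y$ and the unit span a Heisenberg Lie algebra for $[\cdot,\cdot]_{\sharp_\theta}$ (because $[x,y]_{\sharp_\theta}$ is constant), hence $e^{\frac{i}{\theta}(p_1x+q_1y)}\sharp_\theta e^{\frac{i}{\theta}(p_2x+q_2y)}$ is again a plane wave up to the Moyal cocycle phase; applying $T$ yields $E_{\star_\theta^0}((p_1+p_2)T(x)+(q_1+q_2)T(y))$ times that phase. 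On the $\star_\theta^0$ side the corresponding product $E_{\star_\theta^0}(p_1T(x)+q_1T(y))\star_\theta^0 E_{\star_\theta^0}(p_2T(x)+q_2T(y))$ is exactly what \eqref{eq-bchformT} governs. So the intertwining property reduces to \eqref{eq-bchformT} together with the matching of the two Moyal cocycles, i.e. the identity $[T(x),T(y)]_{\star_\theta^0}=T([x,y]_{\sharp_\theta})$ singled out before the statement.

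To establish these, I would compute $[T(x),T(y)]_{\star_\theta^0}$ for $T(x)=(\ell+\beta')e^{2a}$ and $T(y)=\frac{\sqrt{\kappa}\gamma}{2}e^{-2a}$: since $T(x)$ is affine in $\ell$ while $T(y)$ is independent of $\ell$, all terms of order $\ge 3$ in $\theta$ of the Moyal star-commutator vanish (they would need a second $\ell$-derivative of $T(x)$ or a first $\ell$-derivative of $T(y)$), so $[T(x),T(y)]_{\star_\theta^0}=-i\theta\{T(x),T(y)\}$ is a \emph{constant}, in particular central. Consequently $\frac{i}{\theta}T(x),\frac{i}{\theta}T(y)$ and the unit span a Heisenberg Lie algebra for $[\cdot,\cdot]_{\star_\theta^0}$, its $\star_\theta^0$-exponential is an honest two-parameter family of unitary multipliers by the arguments of Sections~\ref{subsec-unique} and~\ref{subsec-starexppsi} (Stone's theorem together with symmetry of the generating operators), and BCH in this algebra truncates to $\text{BCH}(u,v)=u+v+\frac12[u,v]$: this is \eqref{eq-bchformT}. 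Comparing the central value with $T([x,y]_{\sharp_\theta})=[x,y]_{\sharp_\theta}$ (using $T(1)=1$) settles the cocycle identity, whence $T(f_1\sharp_\theta f_2)=T(f_1)\star_\theta^0 T(f_2)$ first on the dense set and then everywhere. A concrete alternative is to substitute \eqref{eq-starexpcoord} into \eqref{eq-defT}, obtain an explicit integral kernel for $T$, and verify the intertwining by a (lengthy) change of variables.

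For the $\gS$-equivariance characterization, recall that $T_{01}$ is explicit via \eqref{eq-inter01} but is itself not $\gS$-equivariant, that the $\gS$-action on the $\Psi_\kappa$-side is $\tau$ of \eqref{eq-actsq}, and that on the $\star_\theta^1$-side it is left translation on $\gS\simeq\ks$. The efficient route is infinitesimal: $T_{01}\circ T$ is $\gS$-equivariant iff it conjugates left multiplication by $\lambda_H^{\sharp},\lambda_E^{\sharp}$ (for $\sharp_\theta$) into left multiplication by $\lambda_H,\lambda_E$ (for $\star_\theta^1$), the moment maps of the shared symmetry $\gS$ acting as inner derivations by the $\gS$-invariance of both products. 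Evaluating these two operator identities on the plane waves and feeding in \eqref{eq-starexpcoord} and \eqref{eq-inter01} turns them, because $T(x)$ and $T(y)$ are affine in $\ell$, into a small first-order linear system for $T(x),T(y)$ --- in the spirit of the PDE analysis of Section~\ref{subsec-retractgeo} --- whose general solution is $T(x)=(\ell+\beta')e^{2a}$, $T(y)=\frac{\sqrt{\kappa}\gamma}{2}e^{-2a}$; conversely, for these values $T$ is fully explicit via \eqref{eq-starexpcoord} and $\gS$-equivariance of $T_{01}\circ T$ follows by a direct change of variables along $\tau$. Combined with the first two steps, $T$ is then an intertwiner between $\sharp_\theta$ and $\star_\theta^0$ for these parameters.

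The main obstacle is the non-formal justification of \eqref{eq-bchformT}: one must know that $E_{\star_\theta^0}(pT(x)+qT(y))$ is a genuine bounded star-exponential and that the Heisenberg algebra generated by $\frac{i}{\theta}T(x),\frac{i}{\theta}T(y)$ integrates to honest one-parameter groups composing through the truncated BCH --- the same analytic care (Stone's theorem, symmetry of the $\lambda_X$ as left-multiplication operators, density of plane waves) as in Sections~\ref{subsec-unique} and~\ref{subsec-starexppsi}. The bookkeeping in the equivariance step, carrying the explicit kernels of $T$ and $T_{01}$ through $\tau$ to extract the linear system, is long but routine.
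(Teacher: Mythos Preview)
Your intertwining argument is essentially the paper's, with a clean conceptual shortcut: where the paper establishes \eqref{eq-bchformT2} by re-solving the defining ODE of the star-exponential with a shifted initial condition, you observe that $[T(x),T(y)]_{\star_\theta^0}$ is constant (hence central), so $\frac{i}{\theta}T(x),\frac{i}{\theta}T(y),1$ span a Heisenberg algebra and BCH truncates. The subsequent check on plane waves is exactly the paper's Fourier/convolution computation, rephrased.

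The equivariance paragraph, however, has a genuine gap. Your claimed equivalence --- ``$T_{01}\circ T$ is $\gS$-equivariant iff it conjugates left $\sharp_\theta$-multiplication by $\lambda^\sharp_H,\lambda^\sharp_E$ into left $\star^1_\theta$-multiplication by $\lambda_H,\lambda_E$'' --- is not correct as stated. $\gS$-equivariance means $W:=T_{01}\circ T$ intertwines the pullback actions; infinitesimally this is $W\circ X^*_\sharp = X^*_1\circ W$ for the fundamental vector fields. Strong invariance expresses these as \emph{inner derivations} $[\lambda_X,\cdot\,]$, not as left multiplications. Passing from ``intertwines adjoint actions'' to ``intertwines left multiplications'' would require $W$ to already be an algebra homomorphism, which you have only established for the \emph{specific} $T(x),T(y)$ --- so for the ``only if'' direction the argument is circular. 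The same circularity recurs when you invoke ``$T(x),T(y)$ are affine in $\ell$'' in the middle of deriving their form.

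The paper avoids this by computing $(T_{01}\circ T)(\tau^*_{(a,\ell)}f)$ directly from the integral definition \eqref{eq-defT}: since $\tau$ is affine in $(x,y)$, a change of variables in $(\xi,\eta,p,q)$ and the $\gS$-invariance of $\star_\theta^1$ reduce equivariance to the functional equations
\[
(T_{01}\circ T)(x)\big((a,\ell)\cdot(a_0,\ell_0)\big)=e^{2a}\big((T_{01}\circ T)(x)(a_0,\ell_0)+\ell\big),\qquad
(T_{01}\circ T)(y)\big((a,\ell)\cdot(a_0,\ell_0)\big)=e^{-2a}(T_{01}\circ T)(y)(a_0,\ell_0),
\]
solved by evaluating at $a_0=\ell_0=0$. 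A valid infinitesimal variant --- closer to your intention --- is to apply $W\circ X^*_\sharp = X^*_1\circ W$ directly to the coordinate functions $x$ and $y$, using only linearity of $W$ and $W(1)=1$ (the paper checks the latter consistency first); this also yields the form of $T(x),T(y)$ without any homomorphism assumption.
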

\begin{proof}
First, we notice that
\begin{multline*}
T(x)(a,\ell)=\frac{1}{2\pi\theta}\int\,  e^{-\frac{i}{\theta}\xi p} \xi\,E_{\star_{\theta}^0}(p T(x))(a,\ell) \dd\xi\dd p= \frac{i}{2\pi}\int\, (\partial_p e^{-\frac{i}{\theta}\xi p}) E_{\star_{\theta}^0}(p T(x))(a,\ell) \dd\xi\dd p\\
= -i\theta (\partial_p E_{\star_{\theta}^0}(p T(x)))_{p=0}(a,\ell)=T(x)(a,\ell),
\end{multline*}
which does not impose any constraint on $T(x)$. We have the same argument for $T(y)$.

We recall the action $\tau$  \eqref{eq-actsq} of $\gS$ on $\gR^2$: $\tau_{(a,\ell)}(x,y)=((x+\ell)e^{2a},ye^{-2a})$. With the help of \eqref{eq-inter01}, we have:
\begin{multline*}
(T_{01}\circ T)(\tau_{(a,\ell)}^*f)(a_0,\ell_0)=\frac{1}{(2\pi\theta)^2}\int\, f((\xi+\ell)e^{2a},\eta e^{-2a}) e^{-\frac{i}{\theta}(\xi p+\eta q)}\\
E_{\star_\theta^1}((T_{01}\circ T)(px+qy))(a_0,\ell_0) \dd\xi\dd\eta\dd p\dd q
= \frac{1}{(2\pi\theta)^2}\int\, f(\xi',\eta') e^{-\frac{i}{\theta}(\xi' p'+\eta' q')}\\ E_{\star_\theta^1}\Big(p' e^{2a} (T_{01}\circ T (x)+\ell)+q'e^{-2a}T_{01}\circ T (y)\Big)(a_0,\ell_0) \dd\xi'\dd\eta'\dd p'\dd q'
\end{multline*}
with a change of variable and using $E_{\star_\theta^1}(p' e^{2a}\ell)(a_0,\ell_0)=e^{\frac{i}{\theta}p'e^{2a}\ell}$ (constant with respect to $a_0,\ell_0$). To obtain $(T_{01}\circ T)(f)((a,\ell)\fois(a_0,\ell_0))$ as a result, we have to identify $p'e^{2a} ((T_{01}\circ T) (x)(a_0,\ell_0)+\ell)+q'e^{-2a}(T_{01}\circ T) (y)(a_0,\ell_0)$ with $(T_{01}\circ T)(p'x+q'y)((a,\ell)\fois(a_0,\ell_0))$, for any $p',q'$, since $\star_\theta^1$ and so $E_{\star_\theta^1}$ are $\gS$-invariant. By taking $a_0=\ell_0=0$, we find $(T_{01}\circ T) (x)(a,\ell)=(\ell+\beta')e^{2a}$ and $(T_{01}\circ T)(y)(a,\ell)=\frac{\sqrt{\kappa}\gamma}{2}e^{-2a}$ with free parameters $\beta',\gamma\in\gR$. Due to explicit computations with $T_{01}^{-1}$, we obtain the desired values for $T(x)$ and $T(y)$. In the following, since it is just a renormalization of $q$, we will take $\gamma= 1$.

Note that by an explicit computation, we have
\begin{equation*}
T([x,y]_{\sharp_\theta})=-\frac{i\theta}{\sqrt\kappa}=\Big[(\ell+\beta')e^{2a},\frac{\sqrt\kappa}{2}e^{-2a}\Big]_{\star_\theta^0}=[T(x),T(y)]_{\star_\theta^0}
\end{equation*}
which will permit to show a particular case of the identity \eqref{eq-bchformT}. Indeed, let us compute
\begin{equation*}
E_{\star_\theta^0}\Big(p(\ell+\beta')e^{2a}+\frac{\sqrt\kappa}{2}qe^{-2a}\Big)\star_\theta^0 E_{\star_\theta^0}\Big(p'(\ell+\beta')e^{2a}+\frac{\sqrt\kappa}{2}q'e^{-2a}\Big).
\end{equation*}
It satisfies the equation \eqref{eq-defstarexpcoord} with a change of initial condition: $u_0(a,\ell)=E_{\star_\theta^0}(p'(\ell+\beta')e^{2a}+\frac{\sqrt\kappa}{2}q'e^{-2a})$. By using the same method as in the computation of \eqref{eq-starexpcoord} with another adapted initial condition $\thatp f$, we obtain
\begin{multline}
E_{\star_\theta^0}\Big(p(\ell+\beta')e^{2a}+\frac{\sqrt\kappa}{2}qe^{-2a}\Big)\star_\theta^0 E_{\star_\theta^0}\Big(p'(\ell+\beta')e^{2a}+\frac{\sqrt\kappa}{2}q'e^{-2a}\Big)\\
= E_{\star_\theta^0}\Big((p+p')(\ell+\beta')e^{2a}+\frac{\sqrt\kappa}{2}(q+q')e^{-2a}\Big) e^{\frac{i}{2\theta\sqrt\kappa}(pq'-p'q)},\label{eq-bchformT2}
\end{multline}
which is the identity \eqref{eq-bchformT} applied to the present situation.

Let $f_1,f_2$ be two functions on $\gR^2$. We will check that $T(f_1\sharp_\theta f_2)=T(f_1)\star_\theta^0 T(f_2)$. By using the definition \eqref{eq-defT} of $T$ and the fact that the Fourier transformation changes the Moyal product $\sharp_\theta$ into the convolution
\begin{equation*}
\int\, e^{-\frac{i}{\theta}(\xi p+\eta q)} (f_1\sharp_\theta f_2)(\xi,\eta) \dd\xi\dd\eta=\int f_1(\xi,\eta) f_2\big(\xi-\frac{q}{2\sqrt\kappa},\eta+\frac{p}{2\sqrt\kappa}\big) e^{-\frac{i}{\theta}(\xi p+\eta q)}\dd\xi\dd\eta,
\end{equation*}
we find
\begin{multline*}
T(f_1\sharp_\theta f_2)(a,\ell)=\frac{1}{(2\pi\theta)^2}\int\, e^{-\frac{i}{\theta}(\xi p+\eta q)} f_1(\xi,\eta)
f_2\big(\xi-\frac{q}{2\sqrt\kappa},\eta+\frac{p}{2\sqrt\kappa}\big)\\
 E_{\star_\theta^0}(pT(x)+qT(y))(a,\ell) \dd\xi\dd\eta\dd p\dd q.
\end{multline*}
In the other way, by using \eqref{eq-bchformT2}, we have
\begin{multline*}
T(f_1)\star_\theta^0 T(f_2)(a,\ell)=\frac{1}{(2\pi\theta)^4}\int\, e^{-\frac{i}{\theta}(\xi p+\eta q+\xi' p'+\eta' q')} f_1(\xi,\eta) f_2(\xi',\eta')\\
 E_{\star_\theta^0}((p+p')(\ell+\beta')e^{2a}+\frac{\sqrt\kappa}{2}(q+q')e^{-2a}) e^{\frac{i}{2\theta\sqrt\kappa}(pq'-p'q)}\dd\xi\dd\eta\dd p\dd q\dd\xi'\dd\eta'\dd p'\dd q'.
\end{multline*}
By performing some integrations, we obtain exactly what we wanted to prove:
\begin{multline*}
T(f_1)\star_\theta^0 T(f_2)(a,\ell)=\frac{1}{(2\pi\theta)^2}\int\, e^{-\frac{i}{\theta}(\xi p+\eta q)} f_1(\xi,\eta)
f_2\big(\xi-\frac{q}{2\sqrt\kappa},\eta+\frac{p}{2\sqrt\kappa}\big)\\
 E_{\star_\theta^0}(pT(x)+qT(y))(a,\ell) \dd\xi\dd\eta\dd p\dd q.
\end{multline*}
\end{proof}

From Equations \eqref{eq-starexpcoord} and \eqref{eq-inter01}, we compute that
\begin{multline*}
W_+(f)(a,\ell):=(T_{01}\circ T)(f)(a,\ell)\\
=\frac{1}{(2\pi\theta)^2}\int\, e^{-\frac{i}{\theta}(\xi p+\eta q)} f(\xi,\eta)
 T_{01}(E_{\star_\theta^0}(p(n
 \ell+\beta')e^{2a}+\frac{\sqrt\kappa}{2}qe^{-2a}))(a,n) \dd\xi\dd\eta\dd p\dd q\\
= \frac{\kappa}{2\pi\theta}e^{-2a}\int_{\gR^2} (1+\eta^2)^{-\frac14}e^{\frac{i\kappa}{\theta}\eta(\ell-e^{-2a}q)}f(q,\frac{\sqrt\kappa}{2}e^{-2a}\sqrt{1+\eta^2}) \,  \Big(\eta+\sqrt{1+\eta^2}\Big)^{\frac{i\beta'\kappa}{\theta}}\dd \eta\dd q.
\end{multline*}
To find $W_-$, just replace $\theta$ by $-\theta$ and $q$ by $-q$ in $T_{01}E_{\star_\theta^0}$ in the second line of the above equation. We see that for $\beta'=\ee\frac{\beta}{\sqrt\kappa}$, we obtain exactly the expression \eqref{eq-intertw} once again.

To conclude, we found here by a third method the same family of $\gS$-equivariant intertwining operator $W_\ee$ between $\sharp_\theta$ and $\star_{\ee\theta}^1$, depending on $\beta\in\gR$. This method is completely different method as the two first where shared symmetry $\gS$ was crucial. Here, we used properties of the Moyal product $\sharp_\theta$, BCH-like formula \eqref{eq-bchformT}, as well as explicit computations of the star-exponential of the coordinates $j_1(a,\ell),$ and $j_2(a,\ell)$.

\section{Conclusion}
\label{sec-concl}

\subsection{Global curvature contraction of the Anti-deSitter space}
\label{subsec-contract}

We note from Theorem \ref{thm-wqu} that unitarity of the intertwiner $W$, which means no loss of information, implies that it is valued in two copies of $\gS$:
\begin{equation*}
W:L^2(\gR^2)\to L^2(\gS)\oplus L^2(\gS).
\end{equation*}
In this section, we interpret these two copies as the global curvature contraction of the Anti-deSitter space $AdS_2$. Let us first compute this global contraction. As in the introduction, we consider the real Lie algebra $\kg_t$ given by 
\begin{equation*}
[H,E]=2E,\qquad [H,F]=-2F,\qquad [E,F]=tH,
\end{equation*}
with $t\geq 0$. However, instead of looking at local charts like in the introduction, we consider globally the Anti-deSitter space denoted by $\tilde M_t$, for $t>0$. One way to describe it globally in view of its contraction ($t\to 0$) is to see it as a sphere for the Killing form in the dual Lie algebra $\kg_t^*$ (so as a coadjoint orbit).

Let $(H^*,E^*,F^*)$ be the dual basis of $\kg_t^*$ with respect to $(H,E,F)$. It turns out that the Killing form on $\kg_t$ is given by
\begin{equation*}
\beta=\begin{pmatrix} 8 & 0 & 0\\ 0&0& 4t\\ 0& 4t &0\end{pmatrix}
\end{equation*}
in the basis $(H,E,F)$. So we define the musical isomorphism $X\in\kg_t\mapsto {}^\flat X\in\kg_t^*$ by ${}^\flat X(Y):=\beta(X,Y)$. One has
\begin{equation*}
{}^\flat H=8H^*,\qquad {}^\flat E=4tF^*,\qquad {}^\flat F=4t E^*.
\end{equation*}
Then, the Killing form can be transported to $\kg_t^*$ by the expression $\beta^*({}^\flat X,{}^\flat Y):=\beta(X,Y)$. And the computation gives
\begin{equation*}
\beta^*=\begin{pmatrix} \frac{1}{8} & 0 & 0\\ 0&0& \frac{1}{4t}\\ 0& \frac{1}{4t} &0\end{pmatrix}
\end{equation*}
in the fixed basis $(H^*,E^*,F^*)$.

To obtain the curvature contraction, one also has to increase the radius of the one-sheeted hyperboloid as $\frac{1}{\sqrt{4t}}$. This will correspond to normalize $\beta^*$ in the following way:
\begin{equation*}
\beta_t:=4t\beta^*
\end{equation*}
This scalar product is actually invariant under the coadjoint action and its spheres coincide with the one-sheeted hyperboloids. Namely, if $\tilde M_t$ is defined as the coadjoint orbit $\Ad^*_G(E^*+F^*)$, one has the following simple characterization
\begin{equation*}
\tilde M_t=\{\xi\in\kg_t^*,\quad \beta_t(\xi,\xi)=2\}.
\end{equation*}
To make explicit the link with the notations of the rest of the paper, we give the $\Phi_\kappa$-local chart of (part of) $\tilde M_t$:
\begin{equation*}
\{2\ell H^*+e^{-2a}E^*+e^{2a}(1-\ell^2 t)F^*,\quad a,\ell\in\gR\}.
\end{equation*}

Now, the global curvature contraction is defined as the limit in $t\to0$, so it corresponds to
\begin{equation*}
\tilde M_0:=\{\xi\in\kg_0^*=(\mathfrak{so}(1,1)\ltimes\gR^2)^*,\quad \beta_0(\xi,\xi)=2\},
\end{equation*}
which can be reformulated as $\tilde M_0=\{\alpha H^*+\gamma E^*+\frac{1}{\gamma}F^*,\ \alpha\in\gR,\, \gamma\in\gR^*\}$. $\tilde M_0$ has two connected components, each corresponding to the Poincar\'e coset $M=SO(1,1)\ltimes\gR^2/\gR$ (see Figure \ref{fig-AdS-Poinca}). Let us summarize the above discussion.
\begin{proposition}
\label{prop-contract}
The global curvature contraction of the Anti-deSitter space $AdS_2$ consists in two copies of the Poincar\'e coset (see Figure \ref{fig-AdS-Poinca}).
\end{proposition}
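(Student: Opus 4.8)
The plan is to establish Proposition~\ref{prop-contract} directly from the computation carried out just above, organised in four steps: (i)~evaluate the rescaled form $\beta_0=\lim_{t\to0}\beta_t$ explicitly; (ii)~solve $\beta_0(\xi,\xi)=2$ to recover the stated parametrisation of $\tilde M_0$; (iii)~read off the two connected components; and (iv)~identify each of them, equivariantly, with the Poincar\'e coset $M=SO(1,1)\ltimes\gR^2/\gR$. Throughout, $G_0$ denotes a group with Lie algebra $\kg_0=\mathfrak{so}(1,1)\ltimes\gR^2$.

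For (i), since $\beta_t=4t\,\beta^\ast$, one gets in the basis $(H^\ast,E^\ast,F^\ast)$
\begin{equation*}
\beta_t=\begin{pmatrix}\tfrac{t}{2}&0&0\\0&0&1\\0&1&0\end{pmatrix},\qquad\text{so}\qquad\beta_0=\begin{pmatrix}0&0&0\\0&0&1\\0&1&0\end{pmatrix}.
\end{equation*}
For (ii), writing $\xi=\alpha H^\ast+\gamma E^\ast+\gamma' F^\ast$ gives $\beta_0(\xi,\xi)=2\gamma\gamma'$, whence
\begin{equation*}
\tilde M_0=\{\alpha H^\ast+\gamma E^\ast+\gamma' F^\ast:\alpha\in\gR,\ \gamma\gamma'=1\}=\{\alpha H^\ast+\gamma E^\ast+\gamma^{-1}F^\ast:\alpha\in\gR,\ \gamma\in\gR^\ast\},
\end{equation*}
which is the reformulation already written before the statement. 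Step (iii) is then immediate: $\xi\mapsto(\alpha,\gamma)$ is a diffeomorphism $\tilde M_0\xrightarrow{\ \sim\ }\gR\times\gR^\ast$, and $\gR\times\gR^\ast=(\gR\times\gR_{>0})\sqcup(\gR\times\gR_{<0})$; write $\tilde M_0^{\pm}$ for the two preimages.

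The substance is in step (iv). A one-line check with the structure relations of $\kg_0$ shows that $\gamma\gamma'$ is a Casimir of the coadjoint action, so each $\tilde M_0^{\pm}$ is a union of coadjoint orbits of $G_0$; moreover it is a single orbit, because $\Ad^\ast_{\exp(sH)}$ rescales $(\gamma,\gamma')\mapsto(e^{-2s}\gamma,e^{2s}\gamma')$ and $\Ad^\ast_{\exp(rE)}$ shifts $\alpha$ freely (leaving $\gamma,\gamma'$ untouched), so that $\tilde M_0^{+}=\Ad^\ast_{G_0}(E^\ast+F^\ast)$ and $\tilde M_0^{-}=\Ad^\ast_{G_0}(-E^\ast-F^\ast)$. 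Computing the coadjoint stabiliser of $E^\ast+F^\ast$ yields the one-dimensional subalgebra $\gR(E+F)=\kg_0^{\sigma_0}$, i.e.\ the subgroup $\exp(\gR(E+F))\cong\gR$ that enters the definition of the Poincar\'e coset in the introduction; hence $\tilde M_0^{\pm}\cong G_0/\exp(\gR(E+F))=SO(1,1)\ltimes\gR^2/\gR=M$, equivariantly. Concretely, letting $t\to0$ in the $\Phi_\kappa$-chart of $\tilde M_t$ displayed above turns it into the \emph{global} parametrisation $\gR^2\to\tilde M_0^{+}$, $(a,\ell)\mapsto 2\ell H^\ast+e^{-2a}E^\ast+e^{2a}F^\ast$ (now surjective, since the constraint has collapsed to $\gamma\gamma'=1$), which matches the Poincar\'e-coset coordinates of Remark~\ref{rmk-prodpoincare}; the same map with $E^\ast,F^\ast$ replaced by $-E^\ast,-F^\ast$ covers $\tilde M_0^{-}$. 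This produces the two copies of $M$ depicted in Figure~\ref{fig-AdS-Poinca}.

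I expect step (iv) to be the main obstacle, not because any single computation is hard but because of the bookkeeping needed to upgrade a mere diffeomorphism into an identification with the \emph{symmetric} space $M$: one must check transitivity on each component, verify that the stabiliser is precisely the one-parameter subgroup $\exp(\gR(E+F))$ appearing in $M=SO(1,1)\ltimes\gR^2/\gR$, and track the component group of $SO(1,1)$ so that ``one orbit per sign of $\gamma$'' is consistent with $M\simeq\gS\simeq\gR^2$ being connected (Remark~\ref{rmk-prodpoincare}). A related subtlety, absent for $t>0$, is that the limiting form $\beta_0$ is degenerate and no longer induces a musical isomorphism $\kg_0\to\kg_0^\ast$, so the coadjoint-orbit identification above has to be run directly on the dual side rather than transported from the adjoint picture.
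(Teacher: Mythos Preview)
Your argument is correct and follows the same route as the paper: the proposition there is stated as a summary of the preceding discussion, which amounts precisely to your steps (i)--(iii), namely computing $\beta_t$, taking the limit $\beta_0$, and reading off $\tilde M_0=\{\alpha H^\ast+\gamma E^\ast+\gamma^{-1}F^\ast:\alpha\in\gR,\ \gamma\in\gR^\ast\}$ with its two connected components. Your step (iv) actually goes further than the paper, which simply asserts that each component ``corresponds to'' the Poincar\'e coset; your coadjoint-orbit and stabiliser computation supplies the equivariant identification that the paper leaves implicit.
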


\begin{figure}[!htb]
  \centering
  \includegraphics[height=20cm]{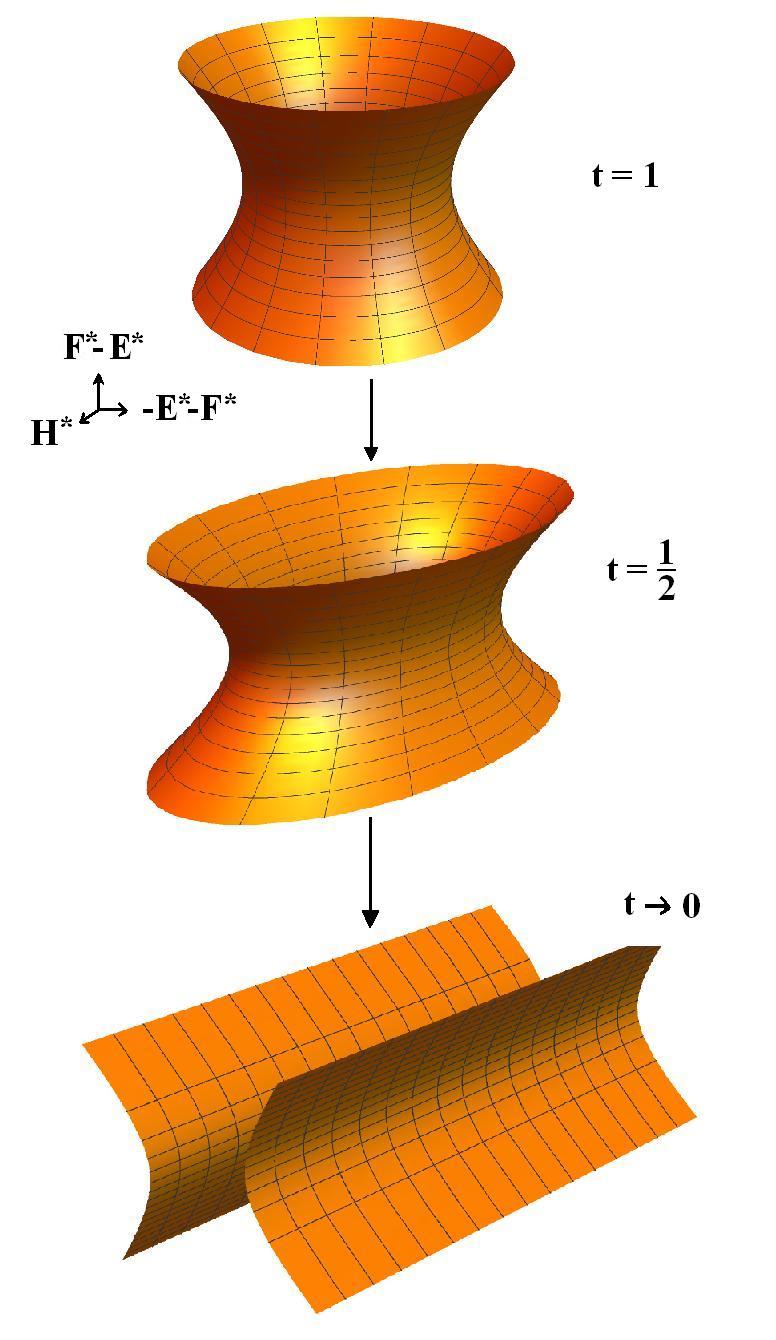}
  \caption[Contour]{\footnotesize{Global curvature contraction ($t\to 0$): Anti-deSitter $\to$ two Poincar\'e cosets.}}
  \label{fig-AdS-Poinca}
\end{figure}


\subsection{Star-exponential on the curvature contraction}

Let us show why we can interpret the two copies of $\gS\simeq M$ (see Remark \ref{rmk-prodpoincare}) in the intertwiner $W$ as the global curvature contraction of $AdS_2$. One can extend in a non-formal way $W$ to polynomials in $x,y$ (see \cite{deGoursac:2014mu}), so that we can evaluate it on the moment maps. Remember that $\lambda_H=\sqrt\kappa (1+2xy)$, $\lambda_E=\sqrt\kappa y$ and $\lambda_F=-\sqrt\kappa x(1+xy)$, so that we have
\begin{align*}
&W_\ee(\lambda_H)=\kappa\ee\ell+\sqrt\kappa(1+\beta),\\
&W_\ee(\lambda_E)=\frac{\kappa}{2}\ee e^{-2a},\\
&W_\ee(\lambda_F)= \frac12e^{2a}\Big((\frac{\theta^2}{2\kappa}-2\beta-\beta^2)\ee-2\sqrt\kappa(1+\beta)\ell-\kappa\ee\ell^2\Big).
\end{align*}
It appears that $\beta=-1$ is a convenient value for this parameter. Then, the coordinates transform as
\begin{equation*}
W_\ee(x)(a,\ell)=e^{2a}(\ell-\frac{\ee}{\sqrt\kappa}),\qquad W_\ee(y)(a,\ell)=\frac{\sqrt\kappa}{2}\ee e^{-2a}.
\end{equation*}

We see that for $\ee=+1$, this corresponds exactly to the change of coordinates defined in Equation \eqref{eq-changej}, so that the first space $\gS$ in the range of $W$ coincides exactly with the one of the coordinate chart $\Phi_\kappa$ (see Equation \eqref{eq-chartphi} and Figure \ref{fig-AdS-habit}).

With $\ee=-1$, it turns out by replacing $x,y$ by their above expression that we get a part of the adjoint orbit described by
\begin{equation*}
-\sqrt\kappa\ell H -e^{2a}(1-\kappa\ell^2)E-\kappa e^{-2a}F
\end{equation*}
in the notations of section \ref{subsec-orbit}. It actually coincides with the chart $\Phi_\kappa'$ (see Figure \ref{fig-AdS-habit}), i.e. with an application of a central symmetry with respect to the origin to the first space $\gS$. In coordinates $x,y$, it corresponds to look at the chart $\Ad_{e^{xE}e^{yF}Z}(H_\kappa)=-\Psi_\kappa(xE+yF)$ with $Z=E-F$. 

In the global contraction process (see section \ref{subsec-contract}) - by identifying adjoint and coadjoint orbits thanks to the Killing form for $t>0$ - one can see that the chart $\Phi_\kappa$, now seen as a part of the coadjoint orbit $\tilde M_t$, goes exactly to the first copy of the Poincar\'e coset 
\begin{equation*}
\tilde M_0^+:=\{\alpha H^*+\gamma E^*+\frac{1}{\gamma}F^*,\ \alpha\in\gR,\, \gamma>0\}\simeq M,
\end{equation*}
while its symmetric counterpart $\Phi_\kappa'$ (also seen in $\tilde M_t$) goes to the second copy
\begin{equation*}
\tilde M_0^-:=\{\alpha H^*+\gamma E^*+\frac{1}{\gamma}F^*,\ \alpha\in\gR,\, \gamma<0\}\simeq M.
\end{equation*}

Therefore, for $\eps=+1$, the space $\gS$ in the range of $W_+$ can be geometrically interpreted as the chart $\Phi_\kappa$ but also as the first connected component $\tilde M_0^+$ of the global contraction $\tilde M_0$. In the same way, for $\eps=-1$, the space $\gS$ in the range of $W_-$ can be geometrically interpreted as the chart $\Phi_\kappa'$ but also as the second connected component $\tilde M_0^-$ of $\tilde M_0$.

Let us now give the interpretation of the star-products. Due to Remark \ref{rmk-prodpoincare}, the natural $SO(1,1)\ltimes \gR^2$-invariant star-product coincides with $\star_\theta^1$ on $M\simeq \gS$. But passing from $\tilde M_0^+\simeq\gS$ to $\tilde M_0^-$ corresponds to a central symmetry, so an overall minus sign, which by Kirillov's orbits method and Weyl type quantization maps \cite{Bieliavsky:2010kg} corresponds to change the sign of the deformation parameter $\theta$ of the star-product. This induces the following definition.
\begin{definition}
The natural $SO(1,1)\ltimes \gR^2$-invariant star-product $\star_\theta$ on the global curvature contraction $\tilde M_0=\tilde M_0^+\cup\tilde M_0^-$ is defined by
\begin{equation*}
f\star_\theta h:= f_+\star_\theta^1 h_+ \,+\, f_-\star^1_{-\theta} h_-,
\end{equation*}
where $f_{\pm}$ is the restriction of $f$ to $\tilde M_0^\pm$ and we use the identification $\tilde M_0^\pm\simeq\gS$.
\end{definition}
Then, it turns out that $(L^2(\tilde M_0),\star_\theta)\simeq (L^2(\gS),\star_\theta^1)\oplus (L^2(\gS),\star_{-\theta}^1)$ as Hilbert algebras, like in the range of $W$.

\medskip

We can now collect the results of section \ref{sec-inter}.
\begin{proposition}
\label{prop-multW}
For any $\beta\in\gR$, the intertwining operator $W=W_{+}\oplus W_{-}: L^2(\gR^2)\to L^2(\gS)\oplus L^2(\gS)$ defined by
\begin{equation*}
W_\ee(f)(a,\ell)=\frac{\kappa}{2\pi\theta}e^{-2a}\int_{\gR^2} (1+\eta^2)^{-\frac14} |\sqrt{1+\eta^2}+\eta|^{\frac{i\beta\sqrt\kappa}{\theta}} e^{\frac{i\kappa\ee}{\theta}\eta(\ell-e^{-2a}q)}f\Big(q,\frac{\sqrt\kappa\ee}{2}e^{-2a}\sqrt{1+\eta^2}\Big)\dd \eta\dd q,
\end{equation*}
is an isomorphism of Hilbert algebras. Therefore, it induces a spatial isomorphism of von Neumann algebras $W:\kM_{\sharp_\theta}(\gR^2)\to\kM_{\star_\theta^1}(\gS)\oplus \kM_{\star_{-\theta}^1}(\gS)\simeq\kM_{\star_\theta}(\tilde M_0)$.
\end{proposition}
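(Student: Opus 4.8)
The statement is a synthesis of the three constructions of Section~\ref{sec-inter} together with the geometric identification of Subsection~\ref{subsec-contract}, so the plan is to assemble those results and then transport them to the level of multipliers. First I would record that, by Theorem~\ref{thm-wqu} (and independently by the constructions of Subsections~\ref{subsec-retractgeo} and \ref{subsec-interstarexp}), for every $\beta\in\gR$ the weight $\bfm(a)=e^{\frac{2i\beta\sqrt\kappa}{\theta}a}$ is unitary, so each $W_\ee$ is a $\gS$-equivariant intertwiner from $(L^2(\gR^2),\sharp_\theta)$ onto $(L^2(\gS),\star^1_{\ee\theta,\bfm})$ and $W=W_+\oplus W_-$ is unitary. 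Since for this weight one has $\star^1_{\ee\theta,\bfm}=\star^1_{\ee\theta}$ (noted at the end of Subsection~\ref{subsec-retractgeo}), the target of $W_\ee$ is exactly $(L^2(\gS),\star^1_{\ee\theta})$, and hence $W$ carries the product $\sharp_\theta$ to $\star^1_\theta$ on the first summand and to $\star^1_{-\theta}$ on the second.

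Next I would check that $W$ is automatically a $*$-isomorphism of (complete) Hilbert algebras. Completeness of $(L^2(\gR^2),\sharp_\theta)$ holds because it is a Moyal algebra, unitarily equivalent to the standard one, and the direct sum of the two complete Hilbert algebras $(L^2(\gS),\star^1_{\pm\theta})$ is complete. The map $W$ is bijective and preserves the scalar product (unitarity) and the product (intertwining property); from the Hilbert-algebra identity $\langle f_1\fois f_2,f_3\rangle=\langle f_2,f_1^*\fois f_3\rangle$ one then deduces $W(f_1)^*\fois' W(f_3)=W(f_1^*)\fois' W(f_3)$ for all $f_3$, and surjectivity of $W$ together with non-degeneracy forces $W(f_1^*)=W(f_1)^*$. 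Combined with the decomposition $L^2(\tilde M_0)=L^2(\tilde M_0^+)\oplus L^2(\tilde M_0^-)$ and the Definition of $\star_\theta$ on $\tilde M_0$, which gives $(L^2(\tilde M_0),\star_\theta)\simeq(L^2(\gS),\star^1_\theta)\oplus(L^2(\gS),\star^1_{-\theta})$, this establishes the first assertion.

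For the von Neumann algebra statement I would invoke the general extension principle \eqref{eq-morphmult}: a unitary $*$-isomorphism of complete Hilbert algebras extends canonically to a spatial isomorphism of their bounded-multiplier algebras by conjugation, $\tilde W(T)=(W\circ L\circ W^{-1},\,W\circ R\circ W^{-1})$. Applied to $W$ this yields an isomorphism $\kM_{\sharp_\theta}(\gR^2)\to\kM_\bb\big((L^2(\gS),\star^1_\theta)\oplus(L^2(\gS),\star^1_{-\theta})\big)$, and it remains only to identify the right-hand side with $\kM_{\star_\theta^1}(\gS)\oplus\kM_{\star_{-\theta}^1}(\gS)\simeq\kM_{\star_\theta}(\tilde M_0)$: any bounded multiplier of a direct sum of Hilbert algebras preserves each summand (the product of elements from distinct summands vanishes, and each summand is non-degenerate), so the multiplier algebra splits accordingly. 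There is no real obstacle here, since all the substantive analytic work (the unitarity of $W$ and its intertwining property) was carried out in Section~\ref{sec-inter}; the only items needing a line of routine verification are the completeness of $(L^2(\gR^2),\sharp_\theta)$ and the splitting of multipliers over a direct sum.
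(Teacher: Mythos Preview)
Your proposal is correct and largely follows the paper's own logic: both invoke Theorem~\ref{thm-wqu} for unitarity and the intertwining property, and both rely on the extension principle~\eqref{eq-morphmult} for the passage to multipliers.

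The one genuine difference is in how you establish $*$-compatibility. The paper verifies $\overline{W_\ee(f)}=W_\ee(\overline f)$ by a direct one-line computation: take the complex conjugate of the integral defining $W_\ee(f)$, perform the change of variables $\eta\mapsto-\eta$, and use the elementary identity $(\sqrt{1+\eta^2}-\eta)^{-1}=\sqrt{1+\eta^2}+\eta$ to see that the phase factor $|\sqrt{1+\eta^2}+\eta|^{i\beta\sqrt\kappa/\theta}$ is restored. You instead deduce $W(f^*)=W(f)^*$ abstractly from unitarity, the intertwining property, and the Hilbert-algebra axiom $\langle f_1\fois f_2,f_3\rangle=\langle f_2,f_1^*\fois f_3\rangle$, plus faithfulness of left multiplication. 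Your argument is perfectly valid and has the advantage of being a general fact about unitary algebra isomorphisms between Hilbert algebras, whereas the paper's computation is specific to the kernel at hand but entirely self-contained and avoids any appeal to non-degeneracy or faithfulness. Your treatment of the multiplier side (the explicit splitting of $\kM_\bb$ over a direct sum) is also slightly more detailed than the paper's, which simply asserts the induced isomorphism as a direct consequence of~\eqref{eq-morphmult}.
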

\begin{proof}
In Theorem \ref{thm-wqu} was proved that $W$ is a unitary algebra homomorphism for the star-products $\sharp_\theta$ and $(\star_\theta^1,\star_{-\theta}^1)$. Let us show that it is compatible with the complex conjugation:
\begin{multline*}
\overline{W_\ee(f)(a,\ell)}=\frac{\kappa}{2\pi\theta}e^{-2a}\int_{\gR^2} (1+\eta^2)^{-\frac14} |\sqrt{1+\eta^2}+\eta|^{\frac{-i\beta\sqrt\kappa}{\theta}} e^{-\frac{i\kappa\ee}{\theta}\eta(\ell-e^{-2a}q)}\\
\overline{f\Big(q,\frac{\sqrt\kappa\ee}{2}e^{-2a}\sqrt{1+\eta^2}\Big)}\dd \eta\dd q
=W_\ee(\overline f)(a,\ell),
\end{multline*}
by performing the change of variables $\eta\mapsto -\eta$ and using $(\sqrt{1+\eta^2}-\eta)^{-1}=(\sqrt{1+\eta^2}+\eta)$.
\end{proof}

The intertwiner $W$ is defined on the multipliers $\kM_{\sharp_\theta}(\gR^2)$, so that we can push by $W$ the non-formal star-exponential $\caE_{\sharp_\theta}$ of the group $G=SL(2,\gR)$ obtained in section \ref{sec-principal}. We have $W_\ee(E_{\sharp_{\theta}}(t\lambda_X))(a,\ell)=E_{\star_{\ee\theta}^1}(t T_{01}(\lambda_X))(a,\ell)$ and an easy computation gives
\begin{equation*}
T_{01}(\lambda_H)=\lambda_H,\qquad T_{01}(\lambda_E)=\lambda_E,\qquad T_{01}(\lambda_F)=\lambda_F+\frac{\theta^2}{4\kappa}e^{2a},
\end{equation*}
so $W(\caE_{\sharp_\theta})$ is characterized by the expressions \eqref{eq-expr1starexp} and \eqref{eq-intertw}.
\begin{proposition}
The expression
\begin{equation*}
\caE_{\star_\theta}:=W(\caE_{\sharp_\theta}):G\to\kM_{\star_\theta}(\tilde M_0)
\end{equation*}
defines the non-formal star-exponential of the group $G=SL_2(\gR)$ realized on the global curvature contraction $\tilde M_0$ of the Anti-deSitter space $AdS_2$ (see Figure \ref{fig-AdS-Poinca}) with its natural $SO(1,1)\ltimes\gR^2$-invariant star-product $\star_\theta$. It is a continuous group homomorphism for the weak topology of the von Neumann algebra $\kM_{\star_\theta}(\tilde M_0)$.
\end{proposition}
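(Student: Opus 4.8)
The plan is to obtain everything as a formal consequence of two facts already in hand: that $W$ is a unitary isomorphism of complete Hilbert algebras intertwining $\sharp_\theta$ with $\star_\theta$ (Theorem~\ref{thm-wqu}, Proposition~\ref{prop-multW}), and that $\caE_{\sharp_\theta}$ --- the transport to $\kM_{\sharp_\theta}(\gR^2)$ of $\caE_{\that\sharp_\theta}$ from Proposition~\ref{prop-bchpsi} along the Hilbert algebra isomorphism~\eqref{eq-transtilde2}, exactly as in Proposition~\ref{prop-contmatrix} --- is a weakly continuous group homomorphism $G\to\kM_{\sharp_\theta}(\gR^2)$ with values in the unitary multipliers, solving the defining equation of the star-exponential of the $\kg$-symmetry generated by $\lambda_H,\lambda_E,\lambda_F$. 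First I would recall that a unitary $*$-isomorphism of complete Hilbert algebras extends, by~\eqref{eq-morphmult}, to a spatial isomorphism $\tilde W$ of the corresponding von Neumann algebras of bounded multipliers, and that $\tilde W$ is automatically a homeomorphism for the weak operator topologies: if $T_i\to T$ weakly then $\langle WL_iW^{-1}f,g\rangle=\langle L_iW^{-1}f,W^{-1}g\rangle\to\langle WLW^{-1}f,g\rangle$, and likewise for the right parts. Hence $\caE_{\star_\theta}:=\tilde W\circ\caE_{\sharp_\theta}$ is a weakly continuous map $G\to\kM_{\star_\theta}(\tilde M_0)$ into the unitary multipliers, and its group homomorphism property (the BCH identity on $G$) is inherited verbatim from that of $\caE_{\sharp_\theta}$.

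Next I would check that $\caE_{\star_\theta}$ is genuinely the star-exponential on $(\tilde M_0,\star_\theta)$, i.e. that for each $X\in\kg$ the one-parameter unitary group $t\mapsto\caE_{\star_\theta}(e^{tX})$ is the solution with value $\gone$ at $t=0$ of $\partial_t\caE_{\star_\theta}(e^{tX})=\frac{i}{\theta}\,W(\lambda_X)\star_\theta\caE_{\star_\theta}(e^{tX})$. Since $W$ is an algebra isomorphism, left $\star_\theta$-multiplication by $W(\lambda_X)$ is $W$-conjugate to left $\sharp_\theta$-multiplication by $\lambda_X$ (as unbounded operators with the transported domains), so applying $\tilde W$ to the defining equation of $\caE_{\sharp_\theta}(e^{tX})$ produces precisely this equation; equivalently, by Stone's theorem the generator of the transported unitary group is $\frac{i}{\theta}$ times left multiplication by $W(\lambda_X)$. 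Uniqueness of the non-formal star-exponential (as used in the proof of Theorem~\ref{thm-starexpbessel}) then identifies $\caE_{\star_\theta}(e^{tX})$ with $E_{\star_\theta}(\frac{i}{\theta}t\lambda_X)$. It remains to see that the functions $W(\lambda_X)$ are the moment maps of the $SL_2(\gR)$-action on $\tilde M_0$ for its natural $SO(1,1)\ltimes\gR^2$-invariant product; this is exactly where the choice $\beta=-1$ is used, and it follows from the explicit expressions of $W_\ee(\lambda_H),W_\ee(\lambda_E),W_\ee(\lambda_F)$ computed above, together with the identifications of the two summands $\gS$ in the range of $W$ with $\tilde M_0^+$ and $\tilde M_0^-$ and of $\star^1_{-\theta}$ with the natural product on $\tilde M_0^-$ transported through the central symmetry (definition of $\star_\theta$ on $\tilde M_0$, Proposition~\ref{prop-contract}).

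The step I expect to be the main obstacle is this last identification --- reconciling the purely algebraic transport through $W$ with the geometric contraction $t\to0$: one must confirm that $W_+$ carries the $\Psi_\kappa$-coordinates onto the $\Phi_\kappa$-chart of $\tilde M_0^+$, that $W_-$ carries them onto the $\Phi_\kappa'$-chart of $\tilde M_0^-$, and that the sign flip $\theta\mapsto-\theta$ forced by the central symmetry is exactly the one built into the product $\star_\theta$ on $\tilde M_0$. Once these are in place, the $\kg$-covariance relations for the $W(\lambda_X)$ under $\star_\theta$ follow automatically from those of the $\lambda_X$ under $\sharp_\theta$ because $W$ is an algebra morphism, so the $W(\lambda_X)$ do form an $\ksl_2(\gR)$-symmetry in the sense of~\cite{deGoursac:2014mu}; everything else --- continuity, the group law, and the defining ODE --- is then formal, being transported from the corresponding statements already proved for $\caE_{\sharp_\theta}$.
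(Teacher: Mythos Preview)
Your proposal is correct and follows the same route the paper takes: the paper states this proposition without a proof environment, treating it as an immediate consequence of Proposition~\ref{prop-bchpsi} (weakly continuous group homomorphism $\caE_{\sharp_\theta}$) together with Proposition~\ref{prop-multW} (the extension of the Hilbert-algebra isomorphism $W$ to a spatial isomorphism of multiplier von Neumann algebras via~\eqref{eq-morphmult}), plus the discussion immediately preceding the statement on $W_\ee(\lambda_X)$ and the identification of the two copies of $\gS$ with $\tilde M_0^\pm$. Your write-up is in fact more detailed than the paper's---you spell out the weak continuity of $\tilde W$, the transport of the defining ODE, and the role of the choice $\beta=-1$---but these are exactly the implicit steps the paper relies on, so there is no divergence in approach.
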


By analytic methods (through the determination of the intertwiner $W$ and to get unitarity), we thus obtained the geometric information that the global curvature contraction coincides with two copies of the Poincar\'e coset. Moreover, we know from \cite{deGoursac:2014mu} that the left or right $\star_\theta$-multiplication by this star-exponential $\caE_{\star_\theta}$ preserves the Fr\'echet algebra $(\caS(\tilde M_0),\star_\theta)$ defined by
\begin{align*}
\caS(\tilde M_0):=\{f\in L^2(\tilde M_0),\quad &\forall X_j,Y_j\in\mathfrak{sl}_2(\gR),\\
& \norm \{\lambda_{X_1},\{\lambda_{X_2},\dots\{\lambda_{X_p},[\lambda_{Y_1},\dots[\lambda_{Y_q},f_+]_{\star_\theta^1}\dots]_{\star_\theta^1}\}_{\star_\theta^1}\dots\}_{\star_\theta^1} \norm<\infty,\\
\text{and }& \norm \{\lambda_{X_1},\{\lambda_{X_2},\dots\{\lambda_{X_p},[\lambda_{Y_1},\dots[\lambda_{Y_q},f_-]_{\star_{-\theta}^1}\dots]_{\star_{-\theta}^1}\}_{\star_{-\theta}^1}\dots \}_{\star_{-\theta}^1} \norm<\infty\}.
\end{align*}

\subsection{Application to Bessel function identities}

Let us finally compare the star-exponential of section \ref{sec-direct} involving Bessel functions with the one of section \ref{sec-principal} involving principal series representation pushed by the intertwiner $W$.

We saw previously that expressions are much simpler in coordinates $(\xz,\yz)$ and $(\xw,\yw)$ instead of $(x,y)$ and $(a,\ell)$. So, we denote by $\caT$ the composition of the transformation $(\xz,\yz)\mapsto (x,y)$ (with partial Fourier transform and change of variables, see section \ref{subsec-starexppsi}) with $T:=T_{01}^{-1}\circ W_+$, which is the intertwiner between $\sharp_\theta$ and $\star_\theta^0$, and with the transformation $(a,\ell)\mapsto (\xw,\yw)$ (see section \ref{subsec-funcdescr}). Then $\caT:\kM_{\that\sharp_\theta}(\gR^2)\to\kM_{\thatp\star_\theta}(\gS)$ intertwines the products $\that\sharp_\theta$ and $\thatp\star_\theta$. Its explicit expression is given by
\begin{equation*}
\caT(\that f)(\xw,\yw)=\frac{\sqrt\kappa}{2\pi\theta} e^{-\xw-\yw} e^{\frac{-i\sqrt\kappa}{\theta}(\xw-\yw)} \int e^{\frac{i\kappa}{2\theta}(e^{-2\xw}\xz-e^{-2\yw}\yz)}\that f(\xz,\yz)\dd\xz\dd\yz.
\end{equation*}
We use now the form of the star-exponential of the group $G=SL(2,\gR)$ in the variables $(\xz,\yz)$ (see \eqref{eq-expr1starexp}):
\begin{equation*}
\caE_{\that\sharp_\theta}(g)(\xz,\yz)=\frac{2\pi\theta}{\sqrt\kappa}|a-c\xz|^{-1+\frac{i\sqrt\kappa}{\theta}}\,\delta\Big(\yz-\frac{d\xz-b}{a-c\xz}\Big),
\end{equation*}
for $g=\begin{pmatrix} a & b \\ c & d \end{pmatrix}$, to obtain
\begin{equation*}
\caE_{\thatp\star_\theta}(g)(\xw,\yw)=\caT(\caE_{\that\sharp_\theta}(g))(\xw,\yw)=e^{-(\xw+\yw)} e^{-\frac{i\sqrt\kappa}{\theta}(\xw-\yw)}\int |a-c\xz|^{-1+\frac{i\sqrt\kappa}{\theta}}\,e^{\frac{i\kappa}{2\theta}(e^{-2\xw}\xz-e^{-2\yw}\frac{d\xz-b}{a-c\xz})}\dd \xz.
\end{equation*}
In particular, with $g=e^{tF}=\begin{pmatrix} 1 & 0 \\ t & 1 \end{pmatrix}$, the above expression satisfies Equation \eqref{eq-fouriersimple}, so by unicity it should coincide with the star-exponential $E_{\thatp \star_\theta}(\frac{i}{\theta}t\lambda_F)$ obtained in Theorem \ref{thm-starexpbessel}. This identification yields a new identity on Bessel functions as we will see.

Before stating the result, let us recall some identities on Bessel functions. Combining \cite[p. 395]{Watson:1966} and \cite[p. 181]{Watson:1966}, we obtain the following property:
\begin{equation*}
\int_{\gR_+} s\,e^{-p^2s^2} J_{\nu}(\alpha s) J_{\nu}(\beta s)\dd s
=-\frac{1}{4i\pi p^2}e^{-\frac{\alpha^2+\beta^2}{4p^2}}\int_{(0+)}^{+\infty} u^{-1-\nu}\,e^{-\frac{\alpha\beta}{4p^2}(u+\frac1u)}\dd u,
\end{equation*}
for $\alpha,\beta\in\gR_+^*$, $\nu, p\in\gC$ such that $|\text{Arg}(p)|<\frac{\pi}{4}$, $-1<\Re(\nu)<1$, and the integration contour is described on Figure \ref{fig-contour}.

\begin{figure}[!htb]
  \centering
  \includegraphics[height=4cm]{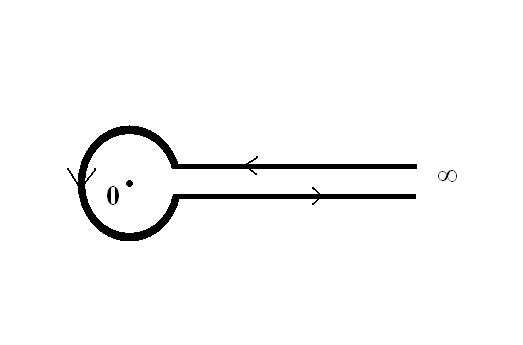}
  \caption[Contour]{\footnotesize{Integration contour $(0+)\to +\infty$.}}
  \label{fig-contour}
\end{figure}


Note that the RHS of the identity involves a well-defined Lebesgue integral on the complex contour, with an exponential decreasing because of the condition $|\text{Arg}(p)|<\frac{\pi}{4}$. We can now show that the comparison between the two star-exponential gives this type of identity but at the singular limit $|\text{Arg}(p)|=\frac{\pi}{4}$, for $\nu\in i\gR^*$, and if we replace the Bessel function $J$ by another function $A$ of Bessel type. Therefore, there will be no exponential decreasing in the second member, only an oscillatory integral.
\begin{theorem}
For $\alpha,\beta\in\gR_+^*$, $\tau\in\gR^*$ and $p\in\gC$ such that $|\text{Arg}(p)|=\frac{\pi}{4}$, we have the following identity:
\begin{equation*}
\int_{\gR_+} \,e^{-p^2s^2} A_{\tau}(\alpha,s) A_{\tau}(\beta,s)\frac{s}{1+4\tau^2 s^2}\dd s
=\frac{1}{4\pi |p^2|}e^{-\frac{\alpha^2+\beta^2}{4p^2}}\int_\gR |\xz|^{-1+i\tau}\,e^{-\frac{\alpha\beta}{4p^2}(\xz+\frac{1}{\xz})}\dd \xz,
\end{equation*}
where we recall that
\begin{equation*}
A_\tau(\alpha,s):=\frac{1}{\sinh(\frac{\pi\tau}{2})} \Re\big( Y_{i\tau}(s\alpha)-2s\tau J_{i\tau}(s\alpha)\big).
\end{equation*}
\end{theorem}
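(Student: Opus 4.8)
The plan is to obtain the identity by computing the non-formal star-exponential $E_{\thatp\star_\theta}(\frac{i}{\theta}t\lambda_F)$ in two different ways and equating the resulting kernels. The first expression is the Bessel-type spectral integral of Theorem~\ref{thm-starexpbessel},
\begin{equation*}
\frac{4\pi}{\theta}e^{-\xw-\yw}\int_{\gR_+}e^{-\frac{i}{\theta}s^2t}\,A_{\frac{\sqrt\kappa}{\theta}}\big(\tfrac{\sqrt{2\kappa}}{\theta}e^{-\xw},s\big)A_{\frac{\sqrt\kappa}{\theta}}\big(\tfrac{\sqrt{2\kappa}}{\theta}e^{-\yw},s\big)\frac{s}{1+\frac{4\kappa s^2}{\theta^2}}\,\dd s ,
\end{equation*}
and the second is $\caT(\caE_{\that\sharp_\theta}(e^{tF}))$, the oscillatory integral displayed just before the statement, obtained by transporting the principal-series star-exponential of Section~\ref{sec-principal} through $\caT$. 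As noted in the text, both are bounded-multiplier solutions of the defining equation \eqref{eq-fouriersimple} with the same delta-type initial datum, so by the uniqueness of the star-exponential used in Section~\ref{subsec-unique} they represent the same element of $\kM_{\thatp\star_\theta}(\gS)$ and hence have the same kernel (a priori for a.e.\ $(\xw,\yw)$, then everywhere by continuity in the parameters).

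The second, computational step is to turn this equality of kernels into the stated identity by a change of variables. Set $\alpha:=\frac{\sqrt{2\kappa}}{\theta}e^{-\xw}$, $\beta:=\frac{\sqrt{2\kappa}}{\theta}e^{-\yw}$, $\tau:=\frac{\sqrt\kappa}{\theta}$ and $p^2:=\frac{it}{\theta}$; then $|\text{Arg}(p)|=\frac{\pi}{4}$, $|p^2|=|t/\theta|$, and one may assume $\theta>0$ (the other sign following from the $\theta\mapsto-\theta$ conjugation symmetry of Proposition~\ref{prop-multW}), so that $\alpha,\beta$ range over $\gR_+^*$ as $\xw,\yw$ range over $\gR$. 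With these substitutions the common prefactor $e^{-(\xw+\yw)}=\frac{\theta^2\alpha\beta}{2\kappa}$ cancels, and the Bessel side is $\frac{2\pi\theta\alpha\beta}{\kappa}\int_{\gR_+}e^{-p^2s^2}A_\tau(\alpha,s)A_\tau(\beta,s)\frac{s}{1+4\tau^2s^2}\,\dd s$. On the other side I would first apply $y=1-t\xz$: this produces the Jacobian $\frac1t$, converts $|1-t\xz|^{-1+i\tau}$ into $|y|^{-1+i\tau}$, and turns the exponent into $\frac{i\theta(\alpha^2+\beta^2)}{4t}-\frac{i\theta}{4t}(\alpha^2y+\beta^2/y)$, yielding the prefactor $e^{\frac{i\theta(\alpha^2+\beta^2)}{4t}}=e^{-\frac{\alpha^2+\beta^2}{4p^2}}$; then the rescaling $y=-\frac{\beta}{\alpha}u$ turns $\alpha^2y+\beta^2/y$ into $-\alpha\beta(u+\frac1u)$, the power $|y|^{-1}$ absorbing the new Jacobian up to a phase $(\beta/\alpha)^{i\tau}$ that exactly cancels $e^{-\frac{i\sqrt\kappa}{\theta}(\xw-\yw)}=(\beta/\alpha)^{-i\tau}$. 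Using $\frac{i\theta}{4t}=-\frac1{4p^2}$ and $\frac{\theta}{t}=\frac1{|p^2|}$ and collecting all constants, the equality of the two kernels becomes exactly
\begin{equation*}
\int_{\gR_+}e^{-p^2s^2}A_\tau(\alpha,s)A_\tau(\beta,s)\frac{s}{1+4\tau^2s^2}\,\dd s=\frac{1}{4\pi|p^2|}e^{-\frac{\alpha^2+\beta^2}{4p^2}}\int_\gR|\xz|^{-1+i\tau}e^{-\frac{\alpha\beta}{4p^2}(\xz+\frac1\xz)}\,\dd\xz .
\end{equation*}

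The main obstacle is the distributional, rather than pointwise, nature of both members at the value $|\text{Arg}(p)|=\frac{\pi}{4}$. On the right the weight $|\xz|^{-1+i\tau}$ is integrable neither at $0$ nor at $\infty$, so the integral converges only conditionally, the rapid oscillation of $\xz+1/\xz$ supplying the needed cancellation; this is the singular limit of the classical absolutely-convergent formula recalled before the statement, and there is no longer any exponential decay. Care is therefore required that the ``same PDE $+$ same initial data $\Rightarrow$ same solution'' argument be run in the right class of (tempered-distribution-valued in the remaining variable) solutions in which both integral formulas genuinely live, and that the resulting two kernels, equal a priori only almost everywhere in $(\xw,\yw)$, be recognized as jointly continuous in $(\alpha,\beta,p)$ on the relevant domain so that the identity holds for every $(\alpha,\beta)\in(\gR_+^*)^2$. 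By comparison, the two substitutions $y=1-t\xz$ and $y=-\frac{\beta}{\alpha}u$, together with the ensuing bookkeeping of constants and phases, are entirely routine.
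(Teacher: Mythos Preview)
Your proof is correct and follows essentially the same route as the paper: identify the Bessel-type kernel of Theorem~\ref{thm-starexpbessel} with the $\caT$-transport of the principal-series star-exponential, then perform the substitution (your two-step $y=1-t\xz$ followed by $y=-\frac{\beta}{\alpha}u$ composes to the paper's single $\xz\mapsto\frac{\alpha}{\beta}(t\xz-1)$) and rename the parameters $\tau=\frac{\sqrt\kappa}{\theta}$, $\alpha=\sqrt{2}\,\tau e^{-\xw}$, $\beta=\sqrt{2}\,\tau e^{-\yw}$, $p^2=\frac{it}{\theta}$. Your added discussion of the distributional subtleties and the continuity needed to upgrade a.e.\ equality to pointwise equality is more careful than the paper, which simply asserts the identification and proceeds with the computation.
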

\begin{proof}
Indeed, the identification $E_{\thatp\star_\theta}(\frac{i}{\theta}\lambda_F)$ with $\caT(E_{\that\sharp_\theta}(\frac{i}{\theta}\lambda_F))$ gives
\begin{multline*}
\frac{4\pi}{|\theta|}\int_{\gR_+} \,e^{-\frac{i}{\theta}s^2t} A_{\frac{\sqrt\kappa}{\theta}}( \frac{\sqrt{2\kappa}}{\theta}e^{-\xw},s) A_{\frac{\sqrt\kappa}{\theta}}(\frac{\sqrt{2\kappa}}{\theta} e^{-\yw},s)\frac{s}{1+\frac{4\kappa s^2}{\theta^2}}\dd s\\
=e^{-\frac{i\sqrt\kappa}{\theta}(\xw-\yw)}\int_\gR |1-t\xz|^{-1+\frac{i\sqrt\kappa}{\theta}}\,e^{\frac{i\kappa}{2\theta}(e^{-2\xw}\xz-e^{-2\yw}\frac{\xz}{1-t\xz})}\dd \xz.
\end{multline*}
for $t\in\gR$ and $\xw,\yw\in\gR$. We set $\tau=\frac{\sqrt\kappa}{\theta}$, $\alpha=\sqrt 2\tau e^{-\xw}$ and $\beta=\sqrt 2\tau e^{-\yw}$. After the change of variables $\xz\mapsto \frac{\alpha}{\beta}(t\xz-1)$, we obtain indeed
\begin{equation*}
\frac{4\pi}{|\theta|}\int_{\gR_+} \,e^{-\frac{i}{\theta}s^2t} A_{\tau}(\alpha,s) A_{\tau}(\beta,s)\frac{s}{1+4\tau^2 s^2}\dd s
=\frac{1}{|t|}e^{\frac{i\theta}{4t}(\alpha^2+\beta^2)}\int_\gR |\xz|^{-1+i\tau}\, e^{\frac{i\alpha\beta\theta}{4t}(\xz+\frac{1}{\xz})}\dd \xz,
\end{equation*}
and we set $p^2=\frac{it}{\theta}$.
\end{proof}

\bibliographystyle{hplain}
\bibliography{biblio-these,biblio-perso,biblio-recents}

\begin{thebibliography}{10}

\bibitem{Arnal:1988cg}
D.~Arnal, M.~Cahen, and S.~Gutt.
\newblock {￼￼Representation of compact Lie groups and Quantization by
  deformation}.
\newblock {\em Bull. Acad. Roy. Belg.}, 74:123--141, 1988.

\bibitem{Arnal:1989cg}
D.~Arnal, M.~Cahen, and S.~Gutt.
\newblock {￼￼*-exponential and holomorphic discrete series}.
\newblock {\em Bull. Soc. Math. Belg.}, 41:207--227, 1989.

\bibitem{Arnal:1996ya}
D.~Arnal and M.~Yahyai.
\newblock {￼￼Star Products for SL(2,R)}.
\newblock {\em Lett. Math. Phys.}, 36:17--26, 1996.

\bibitem{Arnal:1988}
Didier Arnal.
\newblock {The *-exponential}.
\newblock {\em in Quantum Theories and Geometry, Math. Phys. Studies},
  10:23--52, 1988.

\bibitem{Arnal:1990}
Didier Arnal and J.-C. Cortet.
\newblock {Repr\'esentations star des groupes exponentiels}.
\newblock {\em J. Funct. Anal.}, 92:103--175, 1990.

\bibitem{Bayen:1978}
F.~Bayen, M.~Flato, C.~Fronsdal, A.~Lichnerowicz, and D.~Sternheimer.
\newblock {Deformation theory and quantization}.
\newblock {\em Ann. Phys.}, 11:61--151, 1978.

\bibitem{Bayen:1982}
F.~Bayen and J.~M. Maillard.
\newblock {Star exponentials of the elements of the inhomogeneous symplectic
  Lie algebra}.
\newblock {\em Lett. Math. Phys.}, 6:491--497, 1982.

\bibitem{Bieliavsky:1999st}
Pierre Bieliavsky.
\newblock {￼￼Symmetric spaces and star-representations}.
\newblock {\em in ``Advances in Geometry'' (Brylinsky et al. Eds), Birkhauser,
  Progr. Math.}, 172:71--82, 1999.

\bibitem{Bieliavsky:2002}
Pierre Bieliavsky.
\newblock {Strict Quantization of Solvable Symmetric Spaces}.
\newblock {\em J. Sympl. Geom.}, 1:269--320, 2002, math/0010004.

\bibitem{Bieliavsky:2008or}
Pierre Bieliavsky.
\newblock {Non-formal deformation quantizations of solvable Ricci-type
  symplectic symmetric spaces}.
\newblock {\em J. Phys. Conf. Ser.}, 103:012001, 2008, 0711.4002.

\bibitem{Bieliavsky:2010su}
Pierre Bieliavsky, Axel de~Goursac, and Gijs Tuynman.
\newblock {Deformation quantization for Heisenberg supergroup}.
\newblock {\em J. Funct. Anal.}, 263:549--603, 2012, 1011.2370.

\bibitem{Bieliavsky:2008mv}
Pierre Bieliavsky, Stephane Detournay, and Philippe Spindel.
\newblock {The Deformation quantizations of the hyperbolic plane}.
\newblock {\em Commun. Math. Phys.}, 289:529--559, 2009, 0806.4741.

\bibitem{Bieliavsky:2010kg}
Pierre Bieliavsky and Victor Gayral.
\newblock {Deformation Quantization for Actions of Kahlerian Lie Groups}.
\newblock {\em to appear in Mem. Amer. Math. Soc.}, 1109.3419.

\bibitem{Bieliavsky:2013sk}
Pierre Bieliavsky, Victor Gayral, Axel de~Goursac, and Florian Spinnler.
\newblock {Harmonic analysis on homogeneous complex bounded domains and
  noncommutative geometry}.
\newblock {\em in Developments and Retrospectives in Lie Theory: Geometric and
  Analytic Methods (Mason, Penkov, Wolf Eds), Springer, Dev. Math.}, 37:41--76,
  2014, 1311.1871.

\bibitem{Bieliavsky:2002ma}
Pierre Bieliavsky and Yoshiaki Maeda.
\newblock {Convergent Star Product Algebras on ’ax + b’}.
\newblock {\em Lett. Math. Phys.}, 62:233--243, 2002, math/0209295.

\bibitem{Cahen:1996st}
B.~Cahen.
\newblock {￼￼Deformation program for principal series representations}.
\newblock {\em Lett. Math. Phys.}, 36:65--75, 1996.

\bibitem{Cahen:1985}
Michel Cahen, M.~Flato, Simone Gutt, and D.~Sternheimer.
\newblock {Do different deformations lead to the same spectrum?}
\newblock {\em J. Geom. Phys.}, 2:35--49, 1985.

\bibitem{Cahen:1984}
Michel Cahen and Simone Gutt.
\newblock {Discrete spectrum of the hydrogen atom : an illustration of
  deformation theory methods and problems}.
\newblock {\em J. Geom. Phys.}, 1:65--83, 1984.

\bibitem{deGoursac:2014mu}
Axel de~Goursac.
\newblock {Multipliers of Hilbert algebras}.
\newblock 1410.3434.

\bibitem{deGoursac:2014kv}
Axel de~Goursac.
\newblock {Fr\'echet Quantum Supergroups}.
\newblock {\em Pacif. J. Math.}, 273:169--195, 2015, 1105.2420.

\bibitem{Dunster:1990a}
T.~M. Dunster.
\newblock {Bessel functions of purely imaginary order, with an application to
  second-order linear differential equations having a large parameter}.
\newblock {\em SIAM J. Math. Anal.}, 21:995--1018, 1990.

\bibitem{Fedosov:1994}
B.~Fedosov.
\newblock {A simple geometrical construction of deformation quantization}.
\newblock {\em J. Differential Geom.}, 40:213--238, 1994.

\bibitem{Fronsdal:1978}
C.~Fronsdal.
\newblock {￼￼Some ideas about quantization}.
\newblock {\em Rep. Math. Phys.}, 15:111--145, 1978.

\bibitem{Garay:2013gya}
Mauricio Garay, Axel de~Goursac, and Duco van Straten.
\newblock {Resurgent Deformation Quantisation}.
\newblock {\em Ann. Phys.}, 342:83--102, 2014, 1309.0437.

\bibitem{Gayral:2014ad}
Victor Gayral and David Jondreville.
\newblock {Deformation Quantization for Actions of $\mathbb{Q}_p^d$}.
\newblock 1409.3349.

\bibitem{Gutt:2000}
Simone Gutt.
\newblock {Variations on deformation quantization}.
\newblock {\em in Conference Moshe Flato 1999, Kluwer Acad. Publ., Math. Phys.
  Studies}, 21:217--254, 2000.

\bibitem{Kontsevich:2003}
M.~Kontsevich.
\newblock {Deformation quantization of Poisson manifolds}.
\newblock {\em Lett. Math. Phys.}, 66:157--216, 2003, q-alg/9709040.

\bibitem{Korvers:2014}
Stephane Korvers.
\newblock {Quantifications par d\'eformations formelles et non formelles de la
  boule unit\'e de $C^n$}.
\newblock {\em PhD thesis, UCLouvain}, 2014.

\bibitem{Lang:1975}
Serge Lang.
\newblock {\em {SL(2,R)}}.
\newblock Addison-Wesley Publishing Co., 1975.

\bibitem{Lecomte:1992}
P.~Lecomte, P.~Michor, and H.~Schicketanz.
\newblock {The multigraded Nijenhuis-Richardson algebra, its universal property
  and applications}.
\newblock {\em J. Pure Appl. Algebra}, 77:87--102, 1992.

\bibitem{Naimark:1968}
M.~A. Naimark.
\newblock {\em {Linear Differential Operators}}.
\newblock Frederick Ungar Publ. Co, 1968.

\bibitem{Omori:2011tr}
H.~Omori, Y.~Maeda, N.~Miyazaki, and A.~Yoshioka.
\newblock {Deformation Expression for Elements of Algebras (III) - Generic
  product formula for *-exponentials of quadratic forms -}.
\newblock 1107.2474.

\bibitem{Omori:2000}
H.~Omori, Y.~Maeda, N.~Miyazaki, and A.~Yoshioka.
\newblock {Anomalous quadratic exponentials in the star-products}.
\newblock {\em RIMS Kokyuroku}, 1150:128--132, 2000.

\bibitem{Omori:1991}
H.~Omori, Y.~Maeda, and A.~Yoshioka.
\newblock {Weyl manifolds and deformation quantization}.
\newblock {\em Adv. Math.}, 85:224--255, 1991.

\bibitem{Rieffel:1993}
M.~A. Rieffel.
\newblock {Deformation Quantization for actions of $R^D$}.
\newblock {\em Mem. Amer. Math. Soc.}, 106:R6, 1993.

\bibitem{Titchmarsh:1962}
Titchmarsh:1962.
\newblock {\em {Eigenfunction Expansions}}.
\newblock Oxford University Press, 1962.

\bibitem{Watson:1966}
G.~N. Watson.
\newblock {\em {A treatise on the theory of Bessel functions}}.
\newblock Cambridge University Press, 1966.

\bibitem{Weidmann:1987}
Joachim Weidmann.
\newblock {\em {Spectral Theory of Ordinary Differential Operators}}.
\newblock Lect. Notes Math. 1258, Springer, 1987.

\end{thebibliography}

\end{document}